\renewcommand{\thispagestyle}[1]{}
\newtheorem{thm}{Theorem}[section]
\newtheorem{lma}[thm]{Lemma}
\newtheorem{prop}[thm]{Proposition}
\newtheorem{cor}[thm]{Corollary}
\newtheorem*{thm*}{Theorem}
\newtheorem{definition}[thm]{Definition}
\newtheorem{remark}[thm]{Remark}
\newtheorem{Conj}[thm]{Conjecture}
\newcommand{\Ric}{\mbox{Ric}}
\newcommand{\R}{\mathbb R}
\newcommand{\Rm}{\mbox{Rm}}
\newcommand{\be}{\begin{equation}}
\newcommand{\ee}{\end{equation}}
\newcommand{\bee}{\begin{equation*}}
\newcommand{\eee}{\end{equation*}}
\def\D{\Delta_f}
\def\b{\beta\mathbb{}}
\def\na{\nabla}
\def\p{\partial}
\def\la{\langle}
\def\ra{\rangle}
\def\Pi{\displaystyle{\mathbb{II}}}
\def\e{\epsilon}
\def\ve{\varepsilon}
\def\a{\alpha}
\def\b{\beta}
\def\C{\mathbb{C}}
\begin{document}

\title{On a dichotomy of the curvature decay of steady Ricci soliton}
 \author{Pak-Yeung Chan}
\address[Pak-Yeung Chan]{Department of Mathematics,
University of California, San Diego,
La Jolla, CA 92093, USA.}
\email{pachan@ucsd.edu}

\author{Bo Zhu}
\address[Bo Zhu]{School of Mathematics,  University of Minnesota-Twin Cities, MN 55455, USA.}
\email{zhux0629@umn.edu}

\date{\today}

\renewcommand{\subjclassname}{
  \textup{2020} Mathematics Subject Classification}
\subjclass[2020]{Primary 53C21
}

\maketitle

\begin{abstract} We establish a dichotomy on the curvature decay for four dimensional complete noncompact non Ricci flat steady gradient Ricci soliton with linear curvature decay and proper potential function. A similar dichotomy is also shown in higher dimensions under the additional assumption that the Ricci curvature is nonnegative outside a compact subset.
\end{abstract}

\section{Introduction}
 Let $(M^n,g)$ be a smooth connected Riemannian manifold and $X$  a smooth vector field on $M$. The triple $(M,g,X)$ is said to be a Ricci soliton if there is a constant $\lambda \in \mathbb{R}$ such that
\be\label{eq-RS-1}
\Ric+\dfrac{1}{2}L_{X}g=\lambda g,
\ee
where $\Ric$ and $L_{X}$ denote the Ricci curvature and Lie derivative with respect to $X$ respectively. A Ricci soliton is called steady (shrinking, expanding) if $\lambda=0$ $(>0, <0 \text{  resp.})$. Upon scaling the metric by a constant, we may assume $\lambda \in \{-\frac{1}{2}, 0, \frac{1}{2}\}.$ The soliton is called complete if $(M,g)$ is complete as a Riemannian manifold. It is said to be gradient if $X$ can be chosen as $X=\nabla f$ for some smooth function $f$ on $M$. In this case, $f$ is called a potential function and (\ref{eq-RS-1}) can be rewritten as
\be\label{eq-RS-2}
\Ric+\nabla^2 f=\lambda g.
\ee
Ricci soliton is of great importance as it sometimes arises as a rescaled limit of the Ricci flow near its singularities. When $f$ in \eqref{eq-RS-2} is a constant, the metric becomes Einstein. Hence Ricci soliton can also be viewed as a natural generalization of the Einstein manifold as well.

In view of different examples of steady gradient Ricci solitons, the exponential and linear curvature decays seem to be two generic decays for noncompact steady solitons (see \cite{Chowetal-2007, Cao-2010} and ref. therein). 
Munteanu-Sung-Wang \cite{MunteanuSungWang-2017} raised the following conjecture on the curvature decay of steady solitons:
\begin{Conj}\label{dic conj}\cite{MunteanuSungWang-2017} If $(M, g, f)$ is a complete non Ricci flat steady gradient Ricci soliton with $|\Rm|\rightarrow 0$ as $x\to \infty$, then either one of the following estimates holds outside a compact set of $M$:
\be\label{linear decay conj}
C^{-1}r^{-1}\leq |\Rm|\leq Cr^{-1};
\ee
\be\label{exp decay conj}
C^{-1}e^{-r}\leq |\Rm|\leq Ce^{-r},
\ee
where $C$ is a positive constant and $r$ is the distance function.
\end{Conj}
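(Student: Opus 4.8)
The plan is to reduce the dichotomy to a sharp analysis of the scalar curvature $R$ along the gradient flow of $f$, exploiting the rigidity of the steady soliton identities. After normalization one has the standard identities
\be
R + |\nabla f|^2 = 1, \qquad \Delta f = -R, \qquad \nabla R = 2\Ric(\nabla f),
\ee
where the constant is positive precisely because the soliton is not Ricci flat. First I would pin down the asymptotics of the potential: since $|\Rm|\to 0$ forces $R\to 0$, the first identity gives $|\nabla f|\to 1$; combining properness of $f$ with integration of $|\nabla f|$ along minimizing geodesics then yields $-f(x)=r(x)+o(r(x))$ at infinity. Writing $\hat n=\nabla f/|\nabla f|$ for the inward unit radial field and letting $r$ increase along $-\hat n$, the identities give
\be
\frac{d}{dr}R = -2\,\Ric(\hat n,\hat n)\,|\nabla f|,
\ee
so the whole question is governed by the size of the \emph{radial Ricci curvature} $\Ric(\hat n,\hat n)$ relative to $R$.

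The second step is to convert the geometric dichotomy into an analytic one: it suffices to prove that $R$ either satisfies $R\ge c\,r^{-1}$ or $R\le C e^{-cr}$ outside a compact set, because under the stated hypotheses the full curvature tensor is comparable to $R$. In dimension four one controls $|\Rm|$ by $R$ using the special algebraic structure of the curvature operator in low dimension together with the linear decay assumption; in higher dimensions the hypothesis $\Ric\ge 0$ outside a compact set plays this role, giving $|\Rm|\le C R$ and, through comparison geometry, control of the level sets $\{f=-t\}$. Granting this reduction, the heart of the matter is the behaviour of the ratio $\Lambda:=\Ric(\hat n,\hat n)/R$ along the flow.

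The key mechanism is a Gronwall-type alternative for $\Lambda$. If $\Ric(\hat n,\hat n)\ge \varepsilon R$ holds for all large $r$, then $\frac{d}{dr}R\le -2\varepsilon(1+o(1))R$ integrates to exponential decay. The crux is therefore to show that the complementary scenario, in which $\Ric(\hat n,\hat n)$ is negligible against $R$, forces the genuine linear lower bound $R\ge c\,r^{-1}$ and that no intermediate rate can occur. For this I would derive a second-order differential inequality for $R$ (equivalently, a first-order inequality for $\Lambda$) from the drift equations $\D R=-2|\Ric|^2$ and the Lichnerowicz-type equation $\D\Ric = 2\,\Rm * \Ric$, and run a maximum-principle argument for $\Lambda$ on the level sets using the drift Laplacian $\D$. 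The outcome I expect is a spectral gap: $\Lambda$ is driven either to a definite positive value (the exponential regime) or to $0$ at the precise rate $\sim 1/r$ dictated by the collapsing of the radial direction (the linear regime, modelled on the Bryant soliton), with no stable behaviour in between.

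The main obstacle is precisely the exclusion of intermediate decay rates in the linear regime: knowing $\Lambda\to 0$ is not enough, one must show $R\sim r^{-1}$ exactly rather than $r^{-\alpha}$ with $\alpha\ne 1$ or $r^{-1}(\log r)^{\beta}$. This demands closing the argument for the \emph{whole} curvature tensor and not merely for $R$, since the error terms in the evolution of $\Ric(\hat n,\hat n)$ involve sectional curvatures of mixed radial–tangential planes. In four dimensions I would exploit the constrained structure of $\Rm$ and the classification of the three-dimensional cross-sections, while in higher dimensions the sign condition $\Ric\ge 0$ supplies the missing a priori control through splitting and volume comparison on the ends. Making the maximum principle for $\Lambda$ rigorous on non-compact level sets, and ruling out logarithmic corrections, is where the technical weight of the proof will lie.
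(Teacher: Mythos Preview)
The statement you are attempting to prove is a \emph{conjecture} (attributed to Munteanu--Sung--Wang) that the paper does \emph{not} prove in the stated generality; it remains open. The paper establishes only partial results toward it, namely Theorems~\ref{4 dim dichotomy} and~\ref{n dim dichotomy}, under the extra hypotheses of linear curvature decay $|\Rm|\le C(r+1)^{-1}$ \emph{and} proper potential (or $\Ric\ge 0$ near infinity when $n\ge 5$). Your write-up tacitly imports these hypotheses in places (``properness of $f$'', ``$|\Rm|\le CR$''), so what you are really sketching is an approach to those theorems, not to the full conjecture.

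Even viewed that way, the proposal diverges substantially from the paper's method and has a genuine gap. The paper does not analyze the ratio $\Lambda=\Ric(\hat n,\hat n)/R$ along individual integral curves of $\nabla f$ or seek a ``spectral gap'' for $\Lambda$. Instead it works with the quantity $vR$ (where $v=-f$) on the compact level sets $\Sigma_\tau=\{-f=\tau\}$ and runs a parabolic maximum principle in the $\tau$-direction. The crucial step you are missing is the \emph{propagation} from pointwise information to level-set information: if $\liminf_{x\to\infty} rR=0$, one first uses local compactness of the rescaled Ricci flow plus the diameter bound $\mathrm{diam}(\Sigma_\tau)\le C\sqrt{\tau}$ to spread the smallness $r(p_i)R(p_i)\to 0$ across the entire level set $\Sigma_{-f(p_i)}$ (Lemmas~\ref{local fast conv} and~\ref{local fast conv2}). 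Only then does the differential inequality (Lemma~\ref{evolution of vR})
\[
\Delta_{\Sigma_\tau}(vR)-\langle\nabla f,\nabla(vR)\rangle \ge -2v|\Ric|^2+R-C_0v^{-3/2}
\]
allow a maximum-principle argument showing $vR\le\varepsilon$ globally near infinity, whence exponential decay follows from earlier work. Your Gronwall alternative along a single flow line cannot produce this: knowing $\Ric(\hat n,\hat n)\ge\varepsilon R$ along one integral curve gives exponential decay only along that curve, and the obstacle you yourself flag --- excluding intermediate rates and closing the argument for the full $\Rm$ --- is exactly what the level-set spreading and the algebraic dichotomy for the quadratic $-2A_0^2(vR)^2+(vR)-C_0v^{-1/2}$ accomplish in the paper. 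Without a mechanism to pass from one flow line to all of $\Sigma_\tau$, your plan does not close.
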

The curvature decays \eqref{linear decay conj} and \eqref{exp decay conj} appear on the Bryant soliton and the Cigar soliton respectively (see \cite{Chowetal-2007} and ref. therein). This dichotomy conjecture, once established, will be very useful for the studies of steady soliton due to the classification results of steady solitons under
different curvature decay conditions (see \cite{Brendle-2013, CatinoMastroliaMonticelli-2016, DengZhu-2018, DengZhu-2019, DengZhu-2020.2, Deruelle-2012, MunteanuSungWang-2017, Chan-2019.2}). In fact, Deng-Zhu \cite{DengZhu-2018} (see also \cite{MunteanuSungWang-2017}) showed that the above dichotomy conjecture is true under nonnegative sectional curvature and linear scalar curvature decay conditions $R\leq C(r+1)^{-1}$. As an application, they classified $3$ dimensional steady gradient soliton with linear scalar curvature decay (see \cite{DengZhu-2018} and ref. therein).

\begin{thm}\cite{DengZhu-2018}\label{dichotomy in nonnegative sec} Suppose $(M^n, g, f)$ is a complete nonflat steady gradient Ricci soliton with nonnegative sectional curvature and linear scalar curvature decay, i.e. $R\leq C(r+1)^{-1}$. Then either \eqref{linear decay conj} or \eqref{exp decay conj} holds near infinity.
\end{thm}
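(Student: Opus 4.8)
The plan is to prove the dichotomy for the scalar curvature $R$ and then transfer it to $|\Rm|$. Under nonnegative sectional curvature each sectional curvature lies in $[0,R]$, and the remaining components of $\Rm$ are controlled by these, giving $R \le |\Rm| \le c_n R$; hence it suffices to show that $R$ obeys either $c^{-1} r^{-1} \le R \le c\, r^{-1}$ or $c^{-1} e^{-cr} \le R \le c\, e^{-cr}$ near infinity. Throughout I will use the steady-soliton identities $\Ric + \nabla^2 f = 0$, the conservation law $R + |\nabla f|^2 = C_0$, the soliton identity $\nabla R = 2\Ric(\nabla f)$, and $\Delta_f R = -2|\Ric|^2 \le 0$. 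Since the soliton is nonflat, $C_0 > 0$ (normalize $C_0 = 1$) and $R > 0$ by the strong maximum principle. Because $R \to 0$ we get $|\nabla f| \to 1$; together with $\Ric \ge 0$ this makes $f$ proper with $f \to -\infty$ and $-f$ comparable to the distance $r$ near infinity, so I may track decay in $-f$ and in $r$ interchangeably.

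Next I reduce to a scalar ODE along the flow. Write $\nu = \nabla f / |\nabla f|$ and set $\mu = \Ric(\nu,\nu)/R \in [0,1]$ (the bound $\Ric(\nu,\nu) \le R$ comes from $\Ric \ge 0$). Along an integral curve of $-\nabla f$ parametrized by arclength $s$ (so $s \sim r$), the identity $\nabla R = 2\Ric(\nabla f)$ gives $\tfrac{d}{ds}\log R = -2|\nabla f|\,\mu$. In particular $\tfrac{d}{ds}\log R \ge -2$, so $R$ can never decay faster than exponentially. The entire dichotomy is now governed by the limiting behaviour of $\mu$: if $\mu \to 1$ then $\log R \sim -2s$ and $R \asymp e^{-cr}$, matching \eqref{exp decay conj}; if instead $\mu \to 0$ at the precise rate $\mu \sim \tfrac{1}{2s}$ then $\log R \sim -\log s$ and $R \asymp r^{-1}$, matching \eqref{linear decay conj}. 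The problem is therefore to show that $\mu$ has only these two asymptotic behaviours.

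The crux is this rigidity for $\mu$, and I expect it to be the main obstacle. Geometrically, $\nabla^2 f = -\Ric$ forces the level sets of $f$ to have second fundamental form $\mathrm{II} = -\Ric|_{\mathrm{tan}}/|\nabla f| \le 0$ and mean curvature $H = (\Ric(\nu,\nu) - R)/|\nabla f| \le 0$, so they are convex and expand toward infinity at a rate $\sim (1-\mu)R$ set by the tangential Ricci. Thus $\mu \to 1$ means the level sets stay bounded (a cylindrical, Cigar-type end, forcing exponential decay) while $\mu$ bounded away from $1$ means they genuinely expand (a Bryant-type end, forcing polynomial decay). To make this precise and to exclude intermediate rates such as $R \asymp r^{-\alpha}$ with $0 < \alpha < 1$ or $R \asymp e^{-r^{\beta}}$, I would differentiate $\Ric(\nabla f,\nabla f)$ along the flow and, using $\Delta_f R = -2|\Ric|^2$, the second Bianchi identity, and the pinching coming from $\Ric \ge 0$ and $|\Rm| \le c_n R$, derive a Riccati-type differential inequality for $\mu$ whose only bounded solutions converge to $0$ or $1$, with the $1/(2s)$ rate automatically selected in the former case. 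The essential point is that this step uses the full soliton coupling $\nabla^2 f = -\Ric$, not merely $\Ric \ge 0$ with bounded curvature.

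Finally I would upgrade the along-curve estimates to the two-sided bounds in $r$ stated in \eqref{linear decay conj} and \eqref{exp decay conj}. The monotonicity $\tfrac{dR}{df} = 2\Ric(\nu,\nu) \ge 0$ already pairs an upper and a lower bound of the same type along each integral curve, so it remains to compare $R$ at different points of a common level set. Here $\nabla R = 2\Ric(\nabla f)$ bounds the radial derivative by $2R$, and a Harnack-type comparison across a level set, using $\Ric \ge 0$, the convexity of the level sets, and the curvature bound $|\Rm| \le c_n R$, shows that $R$ is comparable along level sets; combined with $-f \sim r$ this converts the flow-decay into the uniform $r$-decay outside a compact set and completes the dichotomy.
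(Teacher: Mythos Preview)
Your proposal contains a genuine gap at what you yourself call the crux. You reduce the dichotomy to the claim that the ratio $\mu=\Ric(\nu,\nu)/R$ must tend to either $0$ (with the specific rate $\mu\sim\tfrac{1}{2s}$) or $1$ along each integral curve of $-\na f$, and you say you ``would differentiate $\Ric(\na f,\na f)$ along the flow and \ldots\ derive a Riccati-type differential inequality for $\mu$ whose only bounded solutions converge to $0$ or $1$.'' But you never produce such an inequality, and there is no indication that the soliton identities actually yield a closed differential inequality for $\mu$ alone: differentiating $\Ric(\na f,\na f)$ brings in $\na\Ric$ and ultimately sectional curvatures in directions other than $\nu$, which the assumption $\mathrm{sec}\ge 0$ bounds below but not above in a way that would close the system. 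Nor do you justify why, in the case $\mu\to 0$, the rate must be exactly $\tfrac{1}{2s}$; without this, intermediate decays such as $R\asymp r^{-\alpha}$ with $\alpha\neq 1$ are not excluded. The final Harnack step (``$R$ is comparable along level sets'') is likewise asserted rather than proved.

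The paper does not prove this theorem directly (it is quoted from Deng--Zhu), but its own arguments in Section~3 for the analogous Theorems~\ref{4 dim dichotomy} and~\ref{n dim dichotomy} follow a route quite different from yours, and the contrast is instructive. Rather than working along individual integral curves, the paper first controls the \emph{level sets} $\Sigma_\tau=\{-f=\tau\}$ globally: a diameter bound $\mathrm{diam}(\Sigma_\tau)\le C\sqrt{\tau}$ (Lemma~\ref{level set diam est}) together with a local Ricci-flow compactness argument (Lemmas~\ref{local fast conv}--\ref{local fast conv2}) shows that if $\liminf rR=0$ then $\sup_{\Sigma_{\tau_i}}rR\to 0$ along some sequence of levels. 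One then applies a parabolic maximum principle to the quantity $vR$ (with $v=-f$) using the evolution inequality of Lemma~\ref{evolution of vR} to force $vR\to 0$ everywhere, from which exponential decay follows. This PDE/maximum-principle mechanism replaces your hoped-for ODE rigidity for $\mu$ and simultaneously handles the comparison across level sets that your Harnack step was meant to supply. Deng--Zhu's original proof is again different, using Gromov's almost flatness theorem and rescaled-flow limits rather than the level-set maximum principle.
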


Recently, Lai \cite{Lai-2020} has resolved a conjecture of Hamilton and constructed a family of $3$ dimensional gradient steady solitons that are flying wings. It is natural to examine their curvature properties to see if they fulfill the expectation from Conjecture \ref{dic conj}. As shown in \cite{Lai-2020}, $|\Rm|$ does not decay uniformly to $0$ at infinity on these $3$ dimensional examples. Analogous steady solitons were also constructed in dimension $n\geq 4$ \cite{Lai-2020}. Nonetheless, It is unclear at this point if the curvature $|\Rm|\to 0$ at infinity on these higher dimensional solitons. 

It follows from the Hamilton-Ivey pinching estimate that any $3$ dimensional complete steady gradient Ricci soliton must have nonnegative sectional curvature (see \cite{Chen-2009} and ref. therein). However, this significant feature does not hold in higher dimensions. In particular, the examples on some line bundles over the complex projective space constructed by Cao \cite{Cao-1996} and Appleton \cite{Appleton-2017} don't have nonnegative sectional curvature on the entire manifold. 
Hence, a reasonable next step is to consider if the dichotomy holds under linear curvature decay 
 condition in higher dimensions. Under the assumption that the soliton is $\kappa$ noncollapsed with dimension $n\geq 4$,  Deng-Zhu \cite{DengZhu-2018} obtained the estimate \eqref{lin decay in n dim} 
 if  $|\Rm|\leq C(r+1)^{-1}$ and $\Ric\geq 0$ outside a compact subset (see also \cite{DengZhu-2020.2, ChowDengMa-2020, BamlerChowDengMaZhang-2021}). However, there does exist collapsed steady solitons (see \cite{Cao-1996, DengZhu-2018.0, DengZhu-2020.3}). Munteanu-Sung-Wang \cite{MunteanuSungWang-2017} and the first named author \cite{Chan-2020} proved that the upper bound of $|\Rm|$ in \eqref{exp decay conj} holds, i.e.
 $$|\Rm|\leq Ce^{-r},$$
 if $r|\Rm|$ is uniformly small near infinity and $f$ is bounded from above. The lower bound in \eqref{exp decay conj} is a result by Chow-Lu-Yang \cite{ChowLuYang-2011} (see also \cite{MunteanuSungWang-2017}).
 
\bigskip
In this note, we are aimed at studying the curvature decay without any assumption of noncollapsedness and uniform smallness of $r|\Rm|$ at infinity. In particular, in real dimension $4$, if the scalar curvature $R$ decays at least linearly and the potential function $f$ is proper, then Conjecture \ref{dic conj} will hold. Our main theorem

\bigskip

\begin{thm}\label{4 dim dichotomy}
Let $(M^4, g, f)$ be a $4$ dimensional, complete, non Ricci flat steady gradient Ricci soliton with proper potential function $\lim_{x\to \infty} f=-\infty$ and linear scalar curvature decay, i.e. $R\leq C'(r+1)^{-1}$. Then there exists a positive constant $C$ such that either one of the following estimates holds near infinity
\be\label{4 dim linear decay rate}
C^{-1}r^{-1}\leq |\Rm|\leq Cr^{-1};
\ee
\be\label{4 dim exp decay rate}
C^{-1}e^{-r}\leq |\Rm|\leq Ce^{-r},
\ee
where $r$ is the distance function.
\end{thm}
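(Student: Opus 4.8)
The plan is to normalize the soliton and reduce the dichotomy to a question about the lower bound of $|\Rm|$, after first establishing an upper bound uniformly. Since $(M,g,f)$ is a non--Ricci-flat steady gradient soliton, one has $R\ge 0$ (Chen) and the identity $R+|\na f|^2\equiv a$ for a constant $a$; as $R\le C'(r+1)^{-1}\to 0$ and $f$ is proper with $f\to-\infty$, the constant $a$ must be positive, for otherwise $\na f\equiv 0$ and the soliton is Ricci flat. After scaling we may thus take $R+|\na f|^2\equiv 1$, so $|\na f|\to 1$ at infinity. Integrating along minimizing geodesics then gives $-f\asymp r$ near infinity, so that $f$ controls the distance quantitatively, the integral curves of $-\na f$ run to infinity with unit speed to leading order, and the function $e^{f}\asymp e^{-r}$ provides the barrier against which the weighted maximum principle will be compared.

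First I would prove the universal bound $|\Rm|\le C(r+1)^{-1}$ with no further case distinction. The inputs are the scalar bound together with the weighted curvature equations $\D R=-2|\Ric|^2$ and $\D\Rm=\Rm\ast\Rm$ (schematically), where $\D u=\Delta u-\la\na f,\na u\ra$ and the drift is favourable because $\na f$ points toward the compact core. In real dimension four I would exploit the decomposition of $\Rm$ into its scalar, traceless-Ricci, and Weyl parts, controlling the Weyl component through its own evolution equation, using that in dimension four the self-dual and anti-self-dual pieces satisfy reaction--diffusion equations of the same schematic type. A weighted maximum principle / Moser iteration for these quantities against the barrier built from $R$ and $f$, combined with Shi-type local derivative estimates to pass from integral to pointwise control, should yield $|\Rm|\le C(r+1)^{-1}$. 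This is automatically consistent with both alternatives of the conclusion, since $e^{-r}\le C(r+1)^{-1}$ near infinity.

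With the upper bound in hand the theorem reduces to a dichotomy for the lower bound of $|\Rm|$. Set $A=\limsup_{x\to\infty} r(x)|\Rm|(x)$. If $A=0$, then $r|\Rm|$ is uniformly small near infinity, and since $f$ is bounded above the Munteanu--Sung--Wang/Chan estimate gives $|\Rm|\le Ce^{-r}$; combined with the Chow--Lu--Yang lower bound $|\Rm|\ge C^{-1}e^{-r}$, valid for every non-flat steady soliton, this is exactly \eqref{4 dim exp decay rate}. If $A>0$, I claim \eqref{4 dim linear decay rate} holds: the upper bound is the universal one above, while for the lower bound I would upgrade the one-sided information $\limsup r|\Rm|>0$ to $\liminf r|\Rm|>0$. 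For this I would flow the points where $r|\Rm|\ge\delta$ along the integral curves of $-\na f$ and derive, from the universal bound and the equation for $\log|\Rm|$, a differential inequality of the form $\tfrac{d}{ds}\log|\Rm|\ge -C/s$ along such curves (with $s\asymp r$), which prevents faster-than-polynomial collapse once $r|\Rm|$ is bounded below at one point; the level-set foliation and Shi estimates then spread the bound to an entire end, giving $|\Rm|\ge C^{-1}r^{-1}$.

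The main obstacle is the universal upper bound $|\Rm|\le C(r+1)^{-1}$: propagating the scalar decay to the full curvature tensor in a possibly collapsed setting without nonnegative curvature is precisely where the dimension-four hypothesis is essential, since one must simultaneously dominate the traceless-Ricci and Weyl parts, which a priori need not be controlled by $R$. A secondary difficulty, in the case $A>0$, is the rigidity that rules out intermediate or oscillatory decay and makes the rate uniform across each level set without any noncollapsing assumption; here the ODE comparison along $-\na f$ and the local derivative estimates must be combined so that a lower bound established along one family of integral curves genuinely controls the whole end.
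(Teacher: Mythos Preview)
Your overall structure is close to the paper's, but the core step---ruling out oscillation between linear and faster-than-linear decay---is where your proposal diverges and runs into a real gap.

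First, the paper splits on $\liminf_{x\to\infty} rR$, not on $\limsup r|\Rm|$. With the four-dimensional estimate $|\Rm|\le A_0 R$ (which you correctly aim for, though you state only the weaker $|\Rm|\le C(r+1)^{-1}$), the case $\liminf rR>0$ gives the two-sided linear bound \emph{for free}: $cr^{-1}\le R\le C'r^{-1}$ together with $c_1|\Rm|\le A_0R$ is already \eqref{4 dim linear decay rate}. All the work then goes into proving that $\liminf rR=0$ forces $|\Rm|\le Ce^{-r}$. Your split on $\limsup$ is logically the contrapositive, so fine in principle, but the method you propose for your hard case does not go through.

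The differential inequality $\tfrac{d}{ds}\log|\Rm|\ge -C/s$ along integral curves of $-\na f$ is not obtainable from the linear curvature bound and Shi estimates. Computing $\tfrac{d}{ds}|\Rm|^2$ via the second Bianchi identity and the soliton relation $R_{kl,i}-R_{ki,l}=R_{ilkj}f_j$ produces, besides the $|\Ric|\cdot|\Rm|^2$ term, terms of order $|\na\Rm|\cdot|\Rm|\le C r^{-3/2}|\Rm|$; after dividing by $|\Rm|^2$ this contributes $\sim r^{-3/2}/|\Rm|\sim r^{-1/2}$ on the regime $|\Rm|\sim r^{-1}$, which integrates to $r^{1/2}$ and destroys any polynomial comparison. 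An ODE along a single trajectory also gives no mechanism to transport a lower bound \emph{across} a level set, so the ``spreading'' step is unsupported as well.

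What the paper actually does is treat $\tau=-f$ as a time variable and apply a parabolic maximum principle on the compact level sets $\Sigma_\tau$ to the quantity $vR$ with $v=-f$. The key computation is
\[
\Delta_{\Sigma_\tau}(vR)-\la\na f,\na(vR)\ra\ \ge\ -2v|\Ric|^2+R-C_0v^{-3/2},
\]
which, once $|\Ric|\le A_0R$, is \emph{quadratic} in $vR$ at any interior maximum: either $vR\ge c(A_0)$ or $vR\le C v^{-1/2}$. Combined with a Deng--Zhu level-set propagation lemma (if $r(p_i)R(p_i)\to0$ along a sequence then $\sup_{\Sigma_{-f(p_i)}}rR\to0$), this forces $vR<\ve$ on all of $\{v\ge a_1(\ve)\}$ once it is small on a single far-out level set; hence $rR\to0$ uniformly, after which your easy case kicks in and the MSW/Chan estimate plus an integration along $-\na f$ give $|\Rm|\le Ce^{-r}$. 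The quadratic structure of the reaction term is exactly what excludes the intermediate oscillatory regime; there is no analogue of it in the single-trajectory ODE you propose.
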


\bigskip
If the Ricci curvature is nonnegative and goes to $0$ uniformly at infinity, then the potential function $f$ is proper \cite{CarrilloNi-2009}. Hence Theorem \ref{4 dim dichotomy} also generalizes
Theorem \ref{dichotomy in nonnegative sec} and the related results in \cite{DengZhu-2018, MunteanuSungWang-2017}.
Instead of using Gromov's almost flatness theorem \cite{Gromov-1978} or looking at the rescaled flow at infinity as in \cite{DengZhu-2018}. Here, we apply the parabolic maximum principle to the level set flow of $f$ as in \cite{Brendle-2014, MunteanuWang-2019} to obtain the dichotomy estimates. One advantages of this approach is that it doesn't require any volume noncollapsing condition. Similar argument works as well in higher dimensions under some assumptions on the Ricci curvature.

\bigskip 
\begin{thm}\label{n dim dichotomy}
Let $(M^n, g, f)$ be an $n\geq 5$ dimensional, complete, non Ricci flat steady gradient Ricci soliton with $\Ric\geq 0$ outside a compact subset and linear Riemann curvature decay, i.e. $|Rm|\leq C''(r+1)^{-1}$. Then there exists positive constant $C$ such that either one of the following estimates holds near infinity
\be\label{lin decay in n dim}
C^{-1}r^{-1}\leq R\leq c_n|\Rm|\leq Cr^{-1};
\ee
\be\label{exp in n dim}
C^{-1}e^{-r}\leq R\leq c_n|\Rm|\leq Ce^{-r},
\ee
where $c_n$ is some dimensional constant.
\end{thm}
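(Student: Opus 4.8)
The plan is to reduce Theorem~\ref{n dim dichotomy} to a single dichotomy for the scalar curvature, and then to settle that dichotomy by a parabolic maximum principle on the level-set flow of $f$, exactly as foreshadowed in the introduction. First I would fix normalizations and record the soliton identities. Tracing \eqref{eq-RS-2} gives $\Delta f = -R$ and $\nabla^2 f = -\Ric$, and the classical identities $R + |\nabla f|^2 \equiv \mathrm{const}$ (normalize to $1$), $\nabla R = 2\Ric(\nabla f)$, and $\Delta_f R = -2|\Ric|^2$ hold. By Chen's theorem \cite{Chen-2009} $R \ge 0$, and $R > 0$ since the soliton is non Ricci flat hence nonflat; moreover $|\Rm| \le C''(r+1)^{-1} \to 0$ forces $R \to 0$ and $|\nabla f| \to 1$. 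Because $\Ric \ge 0$ outside a compact set and $|\Ric| \le c_n|\Rm| \to 0$, the Carrillo--Ni properness argument \cite{CarrilloNi-2009} applies near infinity, so $f$ is proper with $f \to -\infty$, the level sets $\Sigma_s := \{f = s\}$ are compact for $s \ll 0$, $f$ is bounded above, and $f(x) = -r(x) + O(1)$.

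Since $R \le c_n|\Rm|$ is pointwise and $c_n|\Rm| \le Cr^{-1}$ is the hypothesis, the entire content of \eqref{lin decay in n dim}--\eqref{exp in n dim} is (i) a lower bound on $R$ and (ii), in the fast regime, an exponential \emph{upper} bound on $|\Rm|$. I would therefore reduce the theorem to the statement: either $\liminf_{r\to\infty} r R > 0$, or $\lim_{r\to\infty} r|\Rm| = 0$. In the first case $R \ge c\,r^{-1}$ by definition of $\liminf$, whence $|\Rm| \ge R/c_n \ge c' r^{-1}$ and \eqref{lin decay in n dim} follows; in the second, $r|\Rm|\to0$ together with $f$ bounded above lets me invoke Munteanu--Sung--Wang and the first author \cite{MunteanuSungWang-2017, Chan-2020} for $|\Rm| \le Ce^{-r}$, and Chow--Lu--Yang \cite{ChowLuYang-2011} for $R \ge C^{-1}e^{-r}$, giving \eqref{exp in n dim}.

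The heart is to exclude all intermediate behaviour, i.e.\ to prove that $\liminf_{r\to\infty} rR = 0$ already forces $r|\Rm| \to 0$ uniformly. Here I would run the parabolic maximum principle on the level-set flow as in \cite{Brendle-2014, MunteanuWang-2019}. Parametrize a neighbourhood of infinity by the flow of $\nabla f/|\nabla f|^2$, so the time slices are the $\Sigma_s$ and the unit normal is $\nu = \nabla f/|\nabla f|$. Since $\nabla^2 f = -\Ric$, the shape operator of $\Sigma_s$ is $A = -\Ric|_{T\Sigma_s}/|\nabla f|$, so $\Ric \ge 0$ makes $A \le 0$ and the mean curvature $H \le 0$; along flow lines $\partial_s R = 2\Ric(\nu,\nu) \ge 0$, so $R$ is monotone (nonincreasing outward). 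Feeding $\Delta_f R = -2|\Ric|^2$ into the splitting $\Delta = \Delta_{\Sigma_s} + \nabla^2_{\nu\nu} + H\partial_\nu$ produces a reaction--diffusion (in)equality for $R$ along the flow, with intrinsic diffusion $\Delta_{\Sigma_s}$ and reaction bounded below by $|\Ric|^2 \ge R^2/n$. Two applications of the maximum principle then combine: the sign $\partial_s R \ge 0$ propagates smallness of $R$ outward from any slice $\Sigma_{s_0}$ to all $\Sigma_s$ with $s < s_0$; and a chained Harnack/oscillation estimate for the positive $\Delta_f$-superharmonic function $R$, using the scale-$r$ bounded geometry from $|\Rm|\le C'' r^{-1}$ and Bishop--Gromov volume control from $\Ric \ge 0$, bounds $\max_{\Sigma_{s}} R$ by $C \min_{\Sigma_{s}} R$. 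Applying these to a sequence $x_k\to\infty$ with $r(x_k)R(x_k)\to0$ upgrades the subsequential smallness to $\sup_{\{f \le s_k\}} rR \to 0$, i.e.\ $rR\to0$ uniformly. To pass from $rR\to0$ to $r|\Rm|\to0$ I would use the soliton curvature equation rather than a pointwise bound: the identity $\nabla_i R_{jk} - \nabla_j R_{ik} = -R_{ijkl}\nabla_l f$ with $|\nabla f|\approx1$ controls the mixed components of $\Rm$ by $\nabla\Ric$, the Gauss equation with $|A|\le c_n R$ controls the tangential components by the intrinsic curvature of the now nearly flat level sets, and a local Shi-type estimate for $\Delta_f \Rm = \Rm*\Rm$ closes the bootstrap.

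The main obstacle is precisely this propagation step. The reaction--diffusion equation for $R$ carries an a priori uncontrolled normal-Hessian term $\nabla^2_{\nu\nu}R$ and depends on the mean curvature and the unknown intrinsic geometry of the level sets, so the maximum principle has no teeth without extra structure; this is exactly where $\Ric \ge 0$ (through $A \le 0$, $H \le 0$, the sign of $\partial_s R$, and volume comparison) and $|\Rm| \le C'' r^{-1}$ (through scale-$r$ bounded geometry) enter, and it explains why the higher-dimensional statement needs hypotheses absent from the four-dimensional Theorem~\ref{4 dim dichotomy}. A secondary difficulty is the final bridge $rR\to0 \Rightarrow r|\Rm|\to0$: since a Ricci-flat factor has $R=0$ but $|\Rm|\neq0$, no pointwise inequality $|\Rm|\le c_n R$ can hold, and one must genuinely exploit the elliptic curvature equation and the asymptotic flattening of the level sets. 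Once both are in place, the dichotomy is read off from the two growth regimes, linear versus exponential in $s\approx -r$, of the resulting comparison, completing the proof.
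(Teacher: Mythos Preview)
Your high-level architecture is exactly the paper's: dichotomize on $\liminf_{x\to\infty} rR$, and in the case $\liminf rR=0$ run a parabolic maximum principle on the level-set flow of $f$ to force $rR\to 0$ uniformly, then upgrade to $|\Rm|$ and quote \cite{MunteanuSungWang-2017,Chan-2020,ChowLuYang-2011}. However, two of the specific mechanisms you propose do not match the paper and, as written, do not close.

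First, the quantity fed into the maximum principle is $vR$ (with $v=-f$), not $R$. The paper computes
\[
\Delta_{\Sigma_\tau}(vR)-\langle\nabla f,\nabla(vR)\rangle\ \ge\ -2v|\Ric|^2+R-C_0 v^{-3/2},
\]
and at the outermost point where $vR=\varepsilon$, using $|\Ric|\le A_0R$ (which is where $\Ric\ge 0$ enters), this becomes the quadratic $0\ge -2A_0^2(vR)^2+(vR)-C_0v^{-1/2}$, forcing $vR$ to be either uniformly small or bounded below by $3/(8A_0^2)$. Your plan to use the monotonicity $\partial_s R=2\Ric(\nu,\nu)\ge 0$ shows only that $R$ is nonincreasing outward along flow lines; since $r$ grows, this does \emph{not} propagate smallness of $rR$ outward, and applying the reaction--diffusion to $R$ alone gives nothing better than $R\lesssim r^{-1}$.

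Second, to go from a sequence $r(p_i)R(p_i)\to 0$ to $\sup_{\Sigma_{-f(p_i)}}rR\to 0$, the paper does not use a Harnack inequality. It uses the Deng--Zhu argument: pull back the rescaled Ricci flow by the exponential map (so no injectivity-radius lower bound is needed), pass to a local smooth limit by Hamilton compactness, and apply the strong minimum principle to $R\ge 0$ on the limit to get $rR\to 0$ on a ball of fixed rescaled radius; then chain such balls across $\Sigma_{\tau_i}$ using the intrinsic diameter bound $\mathrm{diam}(\Sigma_\tau)\le C\sqrt{\tau}$, which you do not mention. Your proposed Harnack for the $\Delta_f$-superharmonic $R$ with ``Bishop--Gromov control'' is not available in this generality: superharmonic functions do not satisfy $\max\le C\min$, and Bishop--Gromov gives no volume \emph{lower} bound, whereas the whole point here (and the paper says so explicitly) is to avoid any noncollapsing assumption. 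Likewise, the final step $rR\to 0\Rightarrow |\Rm|\le Ce^{-r}$ in the paper goes via $|\Ric|\le Ce^{-r}$, then Deruelle's local Shi estimate $|\nabla^2\Ric|\le Ce^{-r}$, then integrating $\partial_s|\Rm|^2$ along $-\nabla f/|\nabla f|^2$ using $R_{kl,i}-R_{ki,l}=R_{ilkj}f_j$; your Gauss-equation bootstrap is circular since it presupposes the level sets are already nearly flat.
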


\begin{remark}\label{Rm by R in n dim rmk} Under the linear curvature decay condition, we only need to assume that $|\Ric|\leq A_0 R$ for some constant $A_0>0$ and $f$ is proper, instead of $\Ric\geq 0$ near infinity, then the same conclusion in Theorem \ref{n dim dichotomy} will also hold. It can be seen from the volume estimate in \cite{Deruelle-2012} that when estimate \eqref{exp in n dim} is true, the soliton is collapsed. Hence Theorem $\ref{n dim dichotomy}$ also recovers an estimate in \cite{DengZhu-2018}. On the other hand, Theorem \ref{n dim dichotomy} tells us that
\be\label{Rm by R in n dim}
|\Rm|\leq cR \text{  on  } M.
\ee
See \cite{Chan-2019} and references therein for the case of $n=4$. Ancient solution to the Ricci flow and steady soliton satisfying estimate \eqref{Rm by R in n dim} have been recently studied by Ma-Zhang \cite{MaZhang-2021} and the first named author-Ma-Zhang \cite{ChanMaZhang-2021}.
\end{remark}

\bigskip 

For the  level set of potential function under the dichotomy, we have the following two topological classifications of level set for large $\tau$.
\begin{itemize}
    \item If we assume that \eqref{4 dim linear decay rate} or \eqref{lin decay in n dim} holds, then by the Gauss equation 
and Shi's estimate \eqref{Shi est}, the level set $\Sigma_{\tau}:=\{-f=\tau\}$ must have positive scalar curvature for large $\tau$;
\item  If we assume that \eqref{4 dim exp decay rate} or \eqref{exp in n dim} holds, then it follows from Proposition \ref{level sets are e flat} and Shoen-Yau's trick on minimal surface that the level set is diffeomorphic to a compact flat manifold (see also \cite{Deruelle-2012}). 
\end{itemize}
Hence, as a corollary, we obtain that

\begin{cor}\label{level set dich}Under the same assumptions as in Theorem \ref{4 dim dichotomy} or Theorem \ref{n dim dichotomy}, there exists a large constant $\tau_0$ such that either one of the following is true for the compact connected hypersurface $\Sigma_{\tau}:=\{-f=\tau\}$:
\begin{enumerate}
   \item $\Sigma_{\tau}$ is diffeomorphic to a finite quotient of torus for all $\tau\geq \tau_0$;
   \item $\Sigma_{\tau}$ has positive scalar curvature w.r.t. the intrinsic metric induced by $(M,g)$ for all $\tau\geq \tau_0$.
\end{enumerate}
\end{cor}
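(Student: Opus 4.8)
The plan is to split along the two branches of the dichotomy, since the two topological conclusions correspond exactly to the linear and the exponential cases. First I would record the common setup. Because $f$ is proper with $f\to-\infty$, all critical points of $f$ lie in a compact set, so for $\tau$ large the level sets $\Sigma_\tau=\{-f=\tau\}$ are compact smooth hypersurfaces, mutually diffeomorphic through the flow of $\nabla f/|\nabla f|^2$ (which carries $\{-f=\tau\}$ to $\{-f=\tau'\}$), with unit normal $\nu$ proportional to $\nabla f$. Differentiating the soliton equation $\Ric+\nabla^2 f=0$ shows that the second fundamental form of $\Sigma_\tau$ is $A=-|\nabla f|^{-1}\Ric|_{T\Sigma_\tau}$, while the identity $R+|\nabla f|^2=\text{const}$ together with $R\to 0$ forces $|\nabla f|$ to lie between two positive constants for $\tau$ large. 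The Gauss equation then reads
\begin{equation*}
R^{\Sigma_\tau}=R-2\Ric(\nu,\nu)+H^2-|A|^2,
\end{equation*}
where $R^{\Sigma_\tau}$ is the intrinsic scalar curvature and $H=\operatorname{tr}A$; this identity is the engine for both branches.

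In the linear branch I would prove $R^{\Sigma_\tau}>0$. Under \eqref{4 dim linear decay rate} or \eqref{lin decay in n dim} one has $R\ge C^{-1}r^{-1}$. The terms $H^2\ge 0$ and $|A|^2\le|\nabla f|^{-2}|\Ric|^2=O(r^{-2})$ are harmless compared with $R$, so the only dangerous quantity is $\Ric(\nu,\nu)$, which a priori is itself of size $r^{-1}$. The key is the soliton identity $\Ric(\nabla f,\nabla f)=\tfrac12\langle\nabla R,\nabla f\rangle$, which combined with Shi's estimate \eqref{Shi est} (giving $|\nabla R|\le C r^{-3/2}=o(r^{-1})$ under linearly decaying curvature) yields $\Ric(\nu,\nu)=o(r^{-1})$. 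Discarding $H^2$, we then obtain $R^{\Sigma_\tau}\ge R-2\Ric(\nu,\nu)-|A|^2\ge \tfrac12 C^{-1}r^{-1}>0$ for $\tau\ge\tau_0$, which is alternative (2).

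In the exponential branch I would show $\Sigma_\tau$ is a flat manifold. Proposition \ref{level sets are e flat} gives that the level sets are $\epsilon$-flat, i.e. their intrinsic curvature times the square of the diameter tends to $0$; by Gromov's almost flat manifold theorem \cite{Gromov-1978} each $\Sigma_\tau$ for $\tau$ large is diffeomorphic to an infranilmanifold, in particular enlargeable. On the other hand, the Gauss equation above has every ambient term exponentially small by \eqref{4 dim exp decay rate} or \eqref{exp in n dim}, so both $R^{\Sigma_\tau}$ and $\Ric^{\Sigma_\tau}$ converge to $0$. Invoking the Schoen--Yau/Gromov--Lawson rigidity for enlargeable manifolds carrying nonnegative scalar curvature (the minimal-hypersurface descent, see also \cite{Deruelle-2012}) then forces the nilpotent model to be abelian, so $\Sigma_\tau$ is a compact flat manifold, hence a finite quotient of a torus by Bieberbach's theorem; this is alternative (1). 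Since all large level sets are diffeomorphic, a single threshold $\tau_0$ serves both branches.

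The main obstacle is the exponential branch: almost-flatness by itself yields only an infranilmanifold, and promoting this to a genuine flat (torus) quotient requires excluding a non-abelian nilpotent structure. This is precisely where the Schoen--Yau minimal-surface rigidity must do its work, and the delicate point is that the Gauss equation only delivers an exponentially small—not exactly nonnegative—intrinsic scalar curvature. Making this ``almost nonnegative'' scalar curvature statement quantitative enough to run the enlargeability rigidity (or, alternatively, extracting a Ricci-flat flat limit and transferring its diffeomorphism type back to $\Sigma_\tau$ via smooth convergence) is the step I expect to require the most care.
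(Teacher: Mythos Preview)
Your linear branch is correct and matches the paper's argument: Gauss equation plus Shi's estimate on $\nabla R$ to kill the $\Ric(\nu,\nu)$ term, leaving $R^{\Sigma_\tau}\ge cR>0$.

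In the exponential branch your primary route diverges from the paper and runs into exactly the obstacle you flag. You invoke Gromov's almost-flat theorem to get an infranilmanifold and then try to upgrade to a flat quotient via Schoen--Yau/Gromov--Lawson rigidity, but as you note the intrinsic scalar curvature is only $O(e^{-r})$, not nonnegative, so the rigidity step is not directly available. The paper sidesteps this entirely---and in fact explicitly says it avoids Gromov's theorem---by doing what you relegate to a parenthetical: it shows that the pulled-back level set metrics $\psi_s^*g$ on $\Sigma_{\tau_0}$ form a precompact family in $C^\infty$ (uniform diameter upper bound, volume lower bound, and $|\na_{\Sigma_\tau}^k\Rm_{\Sigma_\tau}|\le C_k$, all coming from the exponential curvature decay and the evolution $\partial_s\psi_s^*g=O(e^{-\tau_0-s})\psi_s^*g$), applies Hamilton's compactness to extract a smooth limit which is automatically flat because $\sup_{\Sigma_\tau}|\Rm_{\Sigma_\tau}|\to 0$, and then invokes Bieberbach. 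Smooth convergence transfers the diffeomorphism type back for free, so no enlargeability or minimal-surface argument is needed. This is precisely the content of Proposition~\ref{level sets are e flat}, which you cite but misstate: it already delivers the full conclusion ``diffeomorphic to a finite quotient of torus,'' not merely $\epsilon$-flatness.

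So your alternative in parentheses is the right path; promote it to the main argument and drop the Gromov/Schoen--Yau detour.
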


\begin{remark}
Under the same conditions as in Theorem \ref{n dim dichotomy} (with $n \geq 4$ instead) and additionally the soliton is $\kappa$ noncollapsed, Deng-Zhu \cite{DengZhu-2018} proved that that the level sets of $f$ at infinity are diffeomorphic to a compact gradient Ricci shrinker with nonnegative Ricci curvature. In particular for as $n=4$, the level sets at infinity are diffeomorphic to a spherical $3$ manifolds (see also \cite{ChowDengMa-2020}). However, generally it may not be the case if noncollapsed condition is not assumed.
For instance, let $B_3$ be the three dimensional Bryant steady soliton, the level sets of $f$ at infinity of the product soliton $B_3\times \mathbb{S}^1$ are diffeomorphic to $\mathbb{S}^2\times \mathbb{S}^1$ which has infinite fundamental group $\pi_1$ and admits no shrinker structure \cite{FM-2008}. However, it does support a metric of positive scalar curvature as in Corollary \ref{level set dich} $(b)$. Here, 
in the case of positive scalar curvature
\begin{itemize}
    \item As $n=4$, by the resolutions of the Poinc\'{a}re Conjecture and the Thurston’s Geometrization Conjecture \cite{p1,p2,p3}, $\Sigma_\tau$ is a connected sum of spherical 3-manifolds and some copies of $S^1\times S^2$;
\item 
As $n\geq 5$, by the $\Ric\geq 0$ near infinity condition, the level sets $\Sigma_{\tau}$ in Corollary \ref{level set dich}$(b)$ have almost nonnegative Ricci curvature and hence have first Betti number $b_1(\Sigma_{\tau})\leq n-2$ by a result of Cheeger-Colding \cite[Theorem A.1.13]{CheegerColding-1997}, where $n$ is the real dimension of the ambient manifold $M$.
\end{itemize}

\end{remark}

\bigskip
Dimension reduction serves as an important tool in the studies of Ricci flow and has led to a lot of successes in the classification of Ricci solitons (for example, see \cite{BamlerChowDengMaZhang-2021, Brendle-2013, Brendle-2014, ChowDengMa-2020, ChowLuNi-2006, DengZhu-2018.0, DengZhu-2018, DengZhu-2019, DengZhu-2020.2, Hamilton-1995, MunteanuWang-2019} and ref. therein). Under some volume noncollapsing conditions, one may extract different geometric information by looking at the blow down profiles of the soliton at infinity, which usually split like $\R\times X$, where $X$ is some ancient solution to the Ricci flow of lower dimension. On $4$ dimensional complete noncompact, noncollapsed, non Ricci flat steady gradient soliton with $R\to 0$ at $\infty$ and $\Ric\geq 0$ outside compact set, Chow-Deng-Ma \cite{ChowDengMa-2020} studied the possible rescaled limits and showed that the limits at $\infty$ split off a line. In general, existence of smooth limit at infinity is not expected without the noncollapsedness condition. In such a case, Gromov-Hausdorff topology is one of the most natural notions to work on for the convergence of manifolds. Under the conditions of Theorems \ref{4 dim dichotomy} and \ref{n dim dichotomy}, we apply the level set method in \cite{DengZhu-2018, DengZhu-2019, DengZhu-2020.2} 
to show that the Gromov-Hausdorff limit of the blow down metrics based at any sequence $p_i\to\infty$ also splits isometrically. Very recently, Bamler \cite{Bamler-2020.0, Bamler-2020, Bamler-2020.1} has developed new compactness and partial regularity theories of Ricci flow under relatively weak conditions. Bamler-Chow-Deng-Ma-Zhang \cite{BamlerChowDengMaZhang-2021} also classified the tangent flows at infinity (see \cite{Bamler-2020} for the definition) 
 of $4$ dimensional steady soliton singularity models.

Let $(N, h)$ be a complete Riemannian manifold and $L$ the cylinder $\R\times N$ with product metric $g_L=dt^2+h$. For any $\lambda\, \in\R$, the translation $\rho_{\lambda}: L\longrightarrow L$ is given by $\rho_{\lambda}(s,\omega):=(s+\lambda,\omega)$. A Riemannian manifold $(M,g)$ is said to be smoothly asymptotic to the cylinder $L$ if there exist a compact set $K$ of $M$ and a diffeomorphism $\Phi:(0,\infty)\times N \longrightarrow M\setminus K$ such that $\rho_{\lambda}^*\Phi^*g\longrightarrow g_L$ in $C^{\infty}_{loc}$ sense on $((0,\infty)\times N, g_L)$ as $\lambda \to \infty$ (see \cite{MunteanuWang-2019}).
The asymptotic convergence is at exponential rate if in addition for any integer $k\geq 0$, there is a positive constant $C_k$ such that for all $s> 0$,
\be
\sup_{\omega\in N}\left|\na_{g_L}^k\left(\Phi^*g-g_L\right)\right|_{g_L}(s, \omega)\leq C_k e^{-s}.
\ee

\begin{cor}\label{GH limit}
Under the same assumptions as in Theorem \ref{4 dim dichotomy} or Theorem \ref{n dim dichotomy}, either one of the following holds:
\begin{enumerate}
   \item For any sequence $p_i\to \infty$ in $M$, after passing to a subsequence, $(M, d_{R(p_i)g}, p_i)$ converges in pointed Gromov-Hausdorff sense to a cylinder $(\R\times Y, \sqrt{d_e^2+d_
   Y^2}, p_{\infty})$, where $d_e$ is the flat metric on $\R$, $(Y,d_Y)$ denotes a compact Alexandrov space 
   and $\sqrt{d_e^2+d_
   Y^2}$ indicates the product metric (see also \cite{CheegerColding-1996}).
   \item For any sequence $p_i\to \infty$ in $M$, $(M, d_{R(p_i)g}, p_i)$ converges in pointed Gromov-Hausdorff sense (without passing to subsequence) to the ray $([0,\infty), d_e, 0)$,  where $d_e$ is the flat metric restricted on $[0, \infty)$. In this case, $(M,g)$ is smoothly asymptotic to the flat cylinder $\R\times\left(\mathbb{T}^{n-1}\big/\sim\right)$ 
   at exponential rate, where $\mathbb{T}^{n-1}\big/\sim$ is diffeomorphic to the quotient of torus in Corollary \ref{level set dich}$(a)$.
   \end{enumerate}
\end{cor}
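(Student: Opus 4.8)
The plan is to treat separately the two alternatives of the dichotomy in Theorem \ref{4 dim dichotomy} / Theorem \ref{n dim dichotomy}, using the gradient flow of $f$ to manufacture the Euclidean factor and the level sets of $f$ for the cross–section. Throughout I would use the steady soliton identities $R+|\nabla f|^2=C_0$, $\Delta f=-R$ and $\nabla R=2\Ric(\nabla f)$; normalizing $C_0=1$, the hypothesis $R\to0$ at infinity gives $|\nabla f|\to1$, so near infinity $-f\sim r$ and the unit field $N=\nabla f/|\nabla f|$ is well defined. Its flow foliates $M\setminus K$ by the compact level sets $\Sigma_\tau=\{-f=\tau\}$ (compact since $f$ is proper), with second fundamental form $\mathbb{II}=\nabla^2f|_{\Sigma_\tau}/|\nabla f|=-\Ric|_{\Sigma_\tau}/|\nabla f|$ and induced metric evolving by $\partial_tg_\tau=2\mathbb{II}$ along the flow. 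Moreover the flow lines have geodesic curvature $|\nabla_NN|=O(|\Ric|)$, so they are geodesics up to an error that vanishes after rescaling. These are the two structural facts that drive both cases.

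On the linear branch \eqref{4 dim linear decay rate} / \eqref{lin decay in n dim} one has $|\Rm|\le Cr^{-1}$ and $|\Rm|\le cR$ (Remark \ref{Rm by R in n dim rmk}), so that $R\sim r^{-1}\sim\tau^{-1}$. Fix $p_i\to\infty$, put $\tau_i=-f(p_i)\sim r(p_i)$, and rescale by $R(p_i)$. By the Gauss equation the intrinsic sectional curvature of $\Sigma_{\tau_i}$ obeys $|K^{\Sigma_{\tau_i}}|\le|\Rm|+|\mathbb{II}|^2=O(\tau_i^{-1})$, hence is two–sidedly bounded after rescaling; combined with the diameter control furnished by $|\mathbb{II}|=O(\tau^{-1})$ this makes the rescaled level sets precompact, and a subsequence converges in Gromov–Hausdorff sense to a compact Alexandrov space $(Y,d_Y)$ of curvature bounded below. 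For the product structure, observe that over a tube of bounded rescaled radius $L$ the flow time spans a $g$–interval of length $\sim L\,r(p_i)^{1/2}$, during which $\int2|\mathbb{II}|\,dt=O(r(p_i)^{-1/2})\to0$; thus the cross–section is asymptotically frozen while the flow lines become unit–speed geodesics orthogonal to it, and the rescaled metric converges to the product $d_e^2+d_Y^2$. This yields alternative (1).

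On the exponential branch \eqref{4 dim exp decay rate} / \eqref{exp in n dim}, Proposition \ref{level sets are e flat} and Corollary \ref{level set dich}(a) show the level sets are $\varepsilon$–flat and diffeomorphic to the fixed quotient $\mathbb{T}^{n-1}/\!\sim$, so their intrinsic diameter stays bounded as $\tau\to\infty$; rescaling by $R(p_i)\sim e^{-r(p_i)}\to0$ then collapses each level set to a point and the pointed limit is coordinatized by the flow time alone. In the decreasing direction, a bounded rescaled displacement $L$ costs $g$–length $\sim Le^{r(p_i)/2}$, which exceeds $\tau_i-\min(-f)\sim r(p_i)$ for every fixed $L>0$ once $i$ is large; the flow therefore reaches the compact core, capping the limit at a single endpoint and producing the ray $([0,\infty),d_e,0)$, with no subsequence needed since this one–dimensional limit is canonical. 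Finally, integrating $\partial_tg_\tau=-2\Ric/|\nabla f|$ with $|\nabla^k\Rm|\le C_ke^{-t}$ (flow time $t\sim r$, the bounds coming from Shi's estimate \eqref{Shi est}) shows $g_\tau\to g_\infty$ in $C^\infty$ at exponential rate, with $g_\infty$ flat; hence $(M,g)$ is smoothly asymptotic to $\R\times(\mathbb{T}^{n-1}/\!\sim)$ at exponential rate as in \cite{MunteanuWang-2019}.

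The main obstacle is the linear case in the absence of any noncollapsing hypothesis: one has only Gromov–Hausdorff (not smooth) convergence, and in dimension four $\Ric$ is not sign–definite, so the Cheeger–Colding splitting theorem does not apply. The splitting must be read off directly from the soliton geometry—quantifying that the flow lines are almost geodesics and that $g_\tau$ is almost parallel along them—while simultaneously pinning down the uniform two–sided curvature and diameter bounds on $\Sigma_\tau$ that guarantee the limit $Y$ is a genuine compact cross–section. Securing the level–set diameter at the scale $r^{1/2}$ solely from $|\mathbb{II}|=O(\tau^{-1})$ is the most delicate estimate, and is where the properness of $f$ and the linear decay must be used most carefully.
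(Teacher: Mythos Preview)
Your outline matches the paper's proof of Proposition \ref{GH limit2} almost step for step: split according to the dichotomy, use the flow of $-\nabla f/|\nabla f|^2$ to manufacture the $\R$ factor, and control the cross--section $\Sigma_{\tau_i}$ via the Gauss equation and a diameter bound, then in the exponential case integrate the evolution of the induced metric to get the asymptotic cylinder. Two places deserve tightening. First, the diameter bound $\text{diam}(\Sigma_\tau)\le C\sqrt{\tau}$ is not a direct consequence of $|\mathbb{II}|=O(\tau^{-1})$ alone; the paper invokes the Deng--Zhu estimate (Lemma \ref{level set diam est}), and you correctly flag this as the delicate point, but you should plan to quote that lemma rather than derive the scale from the second fundamental form. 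Second, in the exponential branch the derivative bounds $|\nabla^k\Rm|\le C_k e^{-r}$ do \emph{not} follow from the global Shi estimate \eqref{Shi est} (which is only polynomial); the paper uses the pointwise control $|\Rm|\le cR$ from Remark \ref{Rm by R in n dim rmk} together with Deruelle's \emph{local} Shi--type estimate to upgrade the exponential decay of $R$ to all covariant derivatives (see \eqref{exp derivative estiamte}).
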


\bigskip 
Since we assume $\Ric\geq 0$ outside a compact subset when $n\geq 5$, so in this case, the result in Corollary \ref{GH limit}$(a)$ also follows from the splitting theorem by Cheeger-Colding \cite{CheegerColding-1996}. 
Instead, we shall adopt the level set approach by Deng-Zhu \cite{DengZhu-2018, DengZhu-2019, DengZhu-2020.2} which works for all $n\geq 4$ and provides more geometric information of the metric space $(Y, d_Y)$ (see also Proposition \ref{GH limit2}). In particular, it can be seen from the proof that $(Y, d_Y)$ in Corollary \ref{GH limit}$(a)$ is the Gromov Hausdorff limit of a sequence of level sets as in Corollary \ref{level set dich}$(b)$ with uniformly bounded curvature and diameter after scaling. 
In general, due to volume collapsing, the limit $\R\times Y$ in Corollary \ref{GH limit}$(a)$ can be of lower dimension compared to $M$. On the positively curved Cao steady K\"{a}hler soliton on $\C^2$ \cite{Cao-1996}, the corresponding metric space $(Y, d_Y)$ is $\mathbb{CP}^{1}$ endorsed with a scalar multiple of the Fubini-Study metric (see \cite{DengZhu-2018.0}).

\bigskip 
Steady Ricci solitons with fast curvature decay were extensively studied in \cite{CatinoMastroliaMonticelli-2016, Chan-2019.2, DengZhu-2015, DengZhu-2018, Deruelle-2012, MunteanuSungWang-2017}. As an application of Theorems \ref{4 dim dichotomy} and \ref{n dim dichotomy}, by exploiting the real analyticity of the soliton metric, we prove some classification results on steady gradient Ricci soliton with fast curvature decay under milder conditions. Here, we do not require any global non-negative curvature condition and uniform smallness of the quantity $rR$ at infinity. 


\begin{thm}\label{fast decay soliton}
Suppose $(M^m, g, f)$ is a complete, non Ricci flat steady K\"{a}hler gradient Ricci soliton of complex $m$ dimension with nonnegative Ricci curvature outside a compact subset of $M$, where $m\geq 2$, such that
\be
\liminf_{x\to\infty}rR=0.
\ee 
Further assume that the following conditions are satisfied,
\begin{itemize} 
\item $R\leq C(r+1)^{-1}$ if $m=2$;
\item $|\Rm|\leq C(r+1)^{-1}$ if $m\geq3$.
\end{itemize}
Then $M$ is holomorphically isometric to a quotient of $\Sigma\times \mathbb{C}^{m-1}$ and the curvature $\Rm$ decays exponentially in $r$, where $\Sigma$ is the Hamilton Cigar soliton.
\end{thm}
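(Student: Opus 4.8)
The plan is to route the hypotheses through the dichotomy Theorems \ref{4 dim dichotomy} and \ref{n dim dichotomy}, use the condition $\liminf_{x\to\infty}rR=0$ to eliminate the linear branch, and then extract the product structure from the exponential branch using the Kähler soliton identities together with the real analyticity of the metric. Throughout I will use the steady identities $\Delta f=-R$, $\nabla R=2\Ric(\nabla f)$ and $R+|\nabla f|^2=C_0$ (constant).

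First I would check the hypotheses of the relevant dichotomy and pin down the value of $C_0$. Since $\Ric\geq0$ near infinity we have $R\geq0$ there, so $C_0=R+|\nabla f|^2\geq0$; if $C_0=0$ then $R\equiv0$ and $\nabla f\equiv0$ outside a compact set, whence $\Ric=-\nabla^2 f=0$ there, and by the real analyticity of the soliton metric $\Ric\equiv0$ on the connected manifold $M$, contradicting the non Ricci flat assumption. Hence $C_0>0$ and $|\nabla f|\to\sqrt{C_0}>0$, so $f$ has no critical points near infinity and is proper with $f\to-\infty$ (cf. \cite{CarrilloNi-2009}). For $m=2$ this is exactly the hypothesis of Theorem \ref{4 dim dichotomy}, while for $m\geq3$ the assumptions $\Ric\geq0$ near infinity and $|\Rm|\leq C(r+1)^{-1}$ are those of Theorem \ref{n dim dichotomy}. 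In either case one of the linear estimates \eqref{4 dim linear decay rate}/\eqref{lin decay in n dim} or one of the exponential estimates \eqref{4 dim exp decay rate}/\eqref{exp in n dim} holds near infinity. To rule out the linear branch: for $m\geq3$ the estimate \eqref{lin decay in n dim} already contains $C^{-1}r^{-1}\leq R$, so $rR\geq C^{-1}>0$, contradicting $\liminf rR=0$; for $m=2$ I would combine the linear lower bound $C^{-1}r^{-1}\leq|\Rm|$ in \eqref{4 dim linear decay rate} with the dimension four estimate $|\Rm|\leq cR$ (see \eqref{Rm by R in n dim} and \cite{Chan-2019}) to obtain $R\geq(cC)^{-1}r^{-1}$, again contradicting $\liminf rR=0$. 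Thus we are in the exponential branch, which already yields the asserted exponential decay $C^{-1}e^{-r}\leq|\Rm|\leq Ce^{-r}$.

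It remains to promote exponential decay to the product classification, and here the Kähler structure enters. Since $\Ric$ is $J$-invariant, $\nabla^2 f=-\Ric$ is $J$-invariant, so $\nabla f$ is real holomorphic and $V:=J\nabla f$ is Killing with $|V|=|\nabla f|\to\sqrt{C_0}$. Using $\nabla_X\nabla f=-\Ric(X)$, the parallelism of $J$, and $\Ric(JX)=J\Ric(X)$ one computes $[\nabla f,J\nabla f]=0$, so the $J$-invariant complex line field $\mathcal V:=\mathrm{span}\{\nabla f,J\nabla f\}$ is integrable; moreover $V(R)=0$ (invariance of $R$ under the isometric flow of $V$) gives $\Ric(\nabla f)=\tfrac12\nabla R\perp J\nabla f$. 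The desired splitting $M=(\Sigma\times\mathbb C^{m-1})/\Gamma$ is equivalent to the de Rham decomposition determined by $\mathcal V$ and $\mathcal H:=\mathcal V^{\perp}$, which holds precisely when $\Ric(\mathcal H)=0$; in an orthonormal frame adapted to $\mathcal V$ this is encoded by the vanishing of the real analytic, nonnegative scalar
\be
\Psi:=|\Ric|^2-\frac{|\nabla R|^2}{2|\nabla f|^2}=\sum_{i\geq3}|\Ric(e_i,\cdot)|^2\geq0
\ee
on $\{\nabla f\neq0\}$. When $\Psi\equiv0$, we get $\nabla_X\nabla f=-\Ric(X)\in\mathcal V$ for all $X$, so $\mathcal V$ and hence $\mathcal H$ are parallel and $J$-invariant; the $\mathcal V$-leaves are complete non flat complex one dimensional steady Kähler solitons with proper potential, necessarily the cigar $\Sigma$, while the $\mathcal H$-factor is a complete Ricci flat Kähler manifold which, being asymptotic to flat cross-sections, is flat, hence a quotient of $\mathbb C^{m-1}$. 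This yields the holomorphic isometry to a quotient of $\Sigma\times\mathbb C^{m-1}$.

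The crux, and the step I expect to be the main obstacle, is proving $\Psi\equiv0$. My plan is to use exponential decay to analyze the end: by Proposition \ref{level sets are e flat} and Corollary \ref{GH limit}$(b)$ the level sets $\Sigma_\tau$ become flat and $(M,g)$ is smoothly asymptotic at exponential rate to a flat cylinder $\R\times(\mathbb T^{2m-1}/\!\sim)$. Using the commuting fields $\nabla f,J\nabla f$ and the $J$-invariance of $\Ric$, I would show that $\Psi$ and its covariant derivatives decay to infinite order at infinity, so that $g$ agrees with the model $\Sigma\times(\mathbb C^{m-1}/\Gamma)$ to all orders along the end. Finally I would invoke the real analyticity of the soliton metric, in the form of unique continuation for the soliton system out of a neighborhood of the cylindrical end, to conclude $\Psi\equiv0$ on all of $M$ and hence the global splitting. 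The delicate points are upgrading the mere exponential bound on $|\Rm|$ to infinite-order control of all derivatives of the deviation from the model, and setting up the unique continuation (equivalently, the analytic continuation of the local product structure) from the end; this is exactly where the real analyticity of the soliton metric is essential, and an alternative, should pure analyticity from infinity prove awkward, would be a Carleman-type backward uniqueness argument in the spirit of Kotschwar–Wang.
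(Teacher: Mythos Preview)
Your reduction to the exponential branch via the dichotomy theorems is correct and matches the paper. The gap is in the last step, where you propose to prove $\Psi\equiv 0$ by showing that $\Psi$ and its covariant derivatives decay to infinite order at infinity and then invoking ``unique continuation from the end.'' Real analyticity does not give you this: a real analytic function on a noncompact manifold can decay at any prescribed rate at infinity without vanishing (already $e^{-r}$ on $\R$ does). Analytic continuation propagates vanishing from an \emph{open set} (or from infinite-order vanishing at a \emph{finite} point), not from asymptotic decay at infinity. The Carleman/backward-uniqueness alternative you float is not a drop-in substitute either, since those arguments again need the quantity to vanish identically somewhere, not merely to decay; turning the Kotschwar--Wang machinery into a spatial unique-continuation statement of this type would be a separate project.

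The paper avoids this by producing \emph{exact} identities on an open set near infinity rather than asymptotic ones. Once $|\Rm|\leq Ce^{-r}$ is in hand, Proposition \ref{level sets are e flat} says the level sets $\Sigma_\tau$ are diffeomorphic to a torus quotient. The Gauss equation, the semidefiniteness of the second fundamental form (from $\Ric\geq 0$ there), and the K\"ahler identity $\Ric(\nabla f,\nabla f)=\Ric(J\nabla f,J\nabla f)$ give
\[
R_{\Sigma_\tau}\;\geq\; R-\frac{2\Ric(\nabla f,\nabla f)}{|\nabla f|^2}\;\geq\; 0.
\]
Since a torus admits no metric of positive scalar curvature (Schoen--Yau, Gromov--Lawson), $R_{\Sigma_\tau}\equiv 0$, forcing both inequalities to be equalities. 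This yields the pointwise identities $R|\nabla f|^2=2\Ric(\nabla f,\nabla f)$ and $R^2=2|\Ric|^2$ on the open set $\{-f\geq\tau_0\}$; being polynomial in real analytic data, they extend to all of $M$ by ordinary analytic continuation. These two identities say exactly that $\Ric$ has eigenvalues $0$ (multiplicity $2m-2$) and $R/2$ (multiplicity $2$, eigenspace $\mathcal V$), i.e.\ your $\Psi\equiv 0$, after which your parallel-kernel/de Rham argument (and $|\Rm|\leq cR$ to force flatness of the $\mathbb C^{m-1}$ factor when $m\geq 3$) finishes the proof. The missing idea in your plan is precisely this Schoen--Yau torus step that converts asymptotic information into an identity on an open set.
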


\begin{thm}\label{fast decay Rm soliton}
Let $(M^n, g, f)$ be a complete noncompact and nonflat steady gradient Ricci soliton with nonnegative sectional curvature outside a compact subset of $M$ and linear scalar curvature decay, that is, $R\leq C(r+1)^{-1}$. If in addition that
\be
\liminf_{x\to\infty} rR=0,
\ee
then M is isometric to a quotient of $\Sigma\times \R^{n-2}$, where $\Sigma$ is the Hamilton Cigar soliton.
\end{thm}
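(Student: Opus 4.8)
The plan is to show that the two hypotheses force the exponential alternative of the dichotomy and then to upgrade exponential curvature decay to the rigid product structure. First I would reduce to exponential decay. Since the sectional curvature is nonnegative outside a compact set, on that region the full curvature tensor is controlled by the scalar curvature, $|\Rm|\leq c_n R$; together with $R\leq C(r+1)^{-1}$ this upgrades the hypothesis to the linear Riemann curvature decay $|\Rm|\leq C'(r+1)^{-1}$ needed to invoke the dichotomy. Depending on $n$ I would apply the appropriate dichotomy: Theorem \ref{dichotomy in nonnegative sec} when $n=3$ (where nonnegative sectional curvature is automatic by Hamilton--Ivey), Theorem \ref{4 dim dichotomy} when $n=4$ (after checking that $f$ is proper, using $\Ric\geq 0$ near infinity and $R\to 0$ as in \cite{CarrilloNi-2009}), and Theorem \ref{n dim dichotomy} when $n\geq 5$ (directly, from $\Ric\geq 0$ outside a compact set). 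In each case the linear alternative \eqref{lin decay in n dim} contains the lower bound $C^{-1}r^{-1}\leq R$, i.e. $rR\geq C^{-1}>0$, which is incompatible with $\liminf_{x\to\infty}rR=0$. Hence the exponential alternative holds and $|\Rm|\leq Ce^{-r}$ near infinity.

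Next I would record the analytic consequences. Hamilton's identity gives $R+|\na f|^2=C_0$ for a constant $C_0$, and since the soliton is nonflat $C_0>0$; as $R\to 0$ we get $|\na f|\to\sqrt{C_0}$, so $f$ has no critical points near infinity and its level sets $\Sigma_\tau=\{-f=\tau\}$ are smooth hypersurfaces. Using $\na^2 f=-\Ric$, the second fundamental form of $\Sigma_\tau$ equals $-\Ric|_{\Sigma_\tau}/|\na f|$, which decays exponentially; by the Gauss equation the intrinsic curvature of $\Sigma_\tau$ decays exponentially as well, so the level sets are asymptotically flat and asymptotically totally geodesic. This is consistent with the asymptotic flat-cylinder description $\R\times(\mathbb{T}^{n-1}/\!\sim)$ of Corollary \ref{GH limit}(b) and with the flat quotients of a torus furnished by Corollary \ref{level set dich}(a).

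The heart of the argument, and the step I expect to be the main obstacle, is to promote this asymptotic picture to an exact isometric splitting. The goal is to exhibit the kernel of $\Ric$ as an $(n-2)$-dimensional parallel distribution, so that a de Rham decomposition of the universal cover $\tilde M$ produces a flat $\R^{n-2}$ factor and a $2$-dimensional complementary factor carrying all of the curvature. I would realize the soliton as a self-similar Ricci flow and apply Hamilton's strong maximum principle, which forces the kernel of the curvature to be parallel and constant in time once the curvature is suitably nonnegative. Two genuine difficulties arise: the hypothesis furnishes only nonnegative sectional—not curvature-operator—curvature, and only outside a compact set. This is precisely where the real analyticity of the soliton metric is essential: in harmonic coordinates the metric is real analytic, and unique continuation lets me propagate the rank-two curvature structure visible near infinity (from the asymptotic cigar-times-flat model) inward to all of $M$, and to globalize the nonnegativity required to run the maximum principle.

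Finally, once the splitting $\tilde M=N^2\times\R^{n-2}$ is in place, the factor $N^2$ is a complete, nonflat, $2$-dimensional steady gradient Ricci soliton, hence is Hamilton's cigar $\Sigma$ by the classification of two-dimensional steady solitons, and its curvature indeed decays exponentially in accordance with the estimate established above. Descending from the universal cover realizes $M$ as a quotient of $\Sigma\times\R^{n-2}$, which is the assertion. The delicate point throughout remains the globalization of the previous paragraph, since neither the nonnegativity of the full curvature operator nor its validity on the compact core is given a priori, and it is the analyticity of the metric that bridges this gap.
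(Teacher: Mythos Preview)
Your reduction to the exponential alternative is correct and essentially matches the paper: nonnegative sectional curvature near infinity gives $|\Rm|\leq c_nR$, hence linear $|\Rm|$ decay, $f$ proper, and then the dichotomy together with $\liminf rR=0$ forces $|\Rm|\leq Ce^{-r}$. Your asymptotic description of the level sets (curvature and second fundamental form decaying exponentially, diffeomorphism type a torus quotient) is also right.

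The genuine gap is in the ``heart of the argument''. You want to show that $\ker\Ric$ is an $(n-2)$-dimensional parallel distribution, but you never explain why $\ker\Ric$ has rank exactly $n-2$ anywhere. The asymptotic flat-cylinder picture only says the level-set curvature \emph{tends to} zero; it does not say it \emph{is} zero, and without an exact identity on some open set there is nothing for analytic continuation to propagate. Hamilton's strong maximum principle cannot manufacture the rank condition for you: for the curvature operator you lack $\Rm\geq 0$, and for the Ricci tensor it only says the kernel, whatever its rank, is parallel---it does not force that rank to be $n-2$.

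The paper supplies exactly the missing mechanism. Since the level sets $\Sigma_\tau$ are (by Proposition~\ref{level sets are e flat}) diffeomorphic to torus quotients, and since nonnegative sectional curvature near infinity makes their intrinsic scalar curvature nonnegative via the Gauss equation (using $R\geq 2\Ric(v,v)$ and the sign of $H^2-|A|^2$), the Schoen--Yau/Gromov--Lawson rigidity for tori forces $\Sigma_\tau$ to be \emph{exactly} flat for all large $\tau$. Reading off the equality cases yields the pointwise identities
\[
R|\na f|^2=2\Ric(\na f,\na f),\qquad 2|\Ric|^2=R^2
\]
on a whole open neighborhood of infinity. These are precisely what pin down the eigenvalues of $\Ric$ as $0$ (multiplicity $n-2$) and $R/2$ (multiplicity $2$), after which the strong maximum principle for $\Ric$ makes $\ker\Ric$ parallel and de Rham gives a local flat $\times$ surface splitting outside $K$. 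The analytic continuation is then applied not to a vague ``rank-two structure'' but to the concrete curvature identities \eqref{id 1}--\eqref{id 3} along geodesics, which extend across $K$ and yield nonnegative sectional curvature globally; the conclusion then follows from \cite{DengZhu-2018, MunteanuSungWang-2017}. In short: your outline is missing the torus scalar-curvature rigidity step that converts ``asymptotically flat'' into ``exactly flat'', and without it neither the rank of $\ker\Ric$ nor the input for unique continuation is available.
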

\begin{remark}
If we further assume sectional curvature is nonnegative everywhere on $M$, then Theorem \ref{fast decay Rm soliton} also follows 
from \cite{DengZhu-2018} (see also Theorem \ref{dichotomy in nonnegative sec} and \cite{Chan-2019.2, MunteanuSungWang-2017}).
\end{remark}

We wrap up the introduction by looking at the analogous dichotomy in other types of Ricci solitons.
For complete noncompact and nonflat gradient Ricci shrinker, Munteanu-Wang \cite{MunteanuWang-2017} showed that If $|\Ric|$ is sufficiently small at infinity, then the shrinker is smoothly asymptotic to a cone and hence $|\Rm|\sim r^{-2}$ at infinity by a result of Chow-Lu-Yang \cite{ChowLuYang-2011} (see also \cite{KotschwarWang-2015}). In real dimension $4$, it remains an open problem whether shrinker with bounded scalar curvature $R$ must have either $R\geq c$ for some positive constant $c$ or $R\to 0$ at infinity (this holds in the K\"{a}hler case \cite{MunteanuWang-2019}).  The dichotomy will be extremely useful toward the classification of $4$ dimensional shrinker in view of different studies on the asymptotic geometry of the solitons (see \cite{KotschwarWang-2015, MunteanuWang-2017, MunteanuWang-2019} and ref. therein).

\bigskip 

Before we move on, let us recall the notion of asymptotically conical expanding soliton \cite{Deruelle-2017}. Let $X$ be a smooth $n-1$ dimensional closed manifold with Riemannian metric $g_X$, $C(X)$ is defined to be the cone over $X$, i.e. $\{(t,\omega):$  $t>0, \omega \in X\}$. $g_C$ and $\na_C$ denote the metric $dt^2+t^2g_X$ on $C(X)$ and its Levi-Civita connection respectively. For any positive constant $S$, $\overline{B(o, S)}\subseteq C(X)$ is the set given by $\{(t,\omega):$  $S\geq t>0,\, \omega \in X\}$.

\begin{definition}\label{AC expander} \cite{Deruelle-2016, Deruelle-2017} A complete expanding gradient Ricci soliton $(M, g, f)$ is asymptotically conical (at polynomial rate $\tau=2$)  with asymptotic cone $(C(X), g_C)$ if there exist constants $S_0>0$ and $c_0$, a compact set $K$ in $M$ and a diffeomorphism $\phi:$ $M\setminus K$ $\rightarrow$ $C(X)\setminus \overline{B(o, S_0)}$ such that for any nonnegative integer $k$ \
\be\label{cone derivative estimates for metric}
\sup_{\omega\,\in X}\left|\na_C^k \left[(\phi^{-1})^*g-g_C\right]\right|_{g_C}(t,\omega)=O(t^{-2-k}) \text{  as  } t \to \infty ;
\ee
\be\label{f in cone}
f\circ \phi^{-1}(t,\omega)=-\frac{t^2}{4}+c_0 \text{  for all } t> S_0.
\ee

\end{definition}

Just like the steady case, there are two generic curvature decays at infinity for nonflat asymptotically conical expander \cite{Deruelle-2017}:
\be\label{quadratic decay for expander}
C^{-1}v^{-1}\leq |\Rm| \leq Cv^{-1};
\ee
\be\label{exp decay for expander}
 C^{-1}v^{1-\frac{n}{2}}e^{-v}\leq |\Rm|\leq Cv^{1-\frac{n}{2}}e^{-v},
\ee
where $v=\frac{n}{2}-f$ and $\lim_{x\to\infty}4r^{-2}v=1$.  The curvature decays \eqref{quadratic decay for expander} and \eqref{exp decay for expander} can be found in the Bryant expanding soliton and the K\"{a}hler expander constructed by Feldman-Ilmanen-Knopf \cite{FeldmanIlmanenKnopf} respectively. It will be interesting to see whether or not \eqref{quadratic decay for expander} and \eqref{exp decay for expander} are the only possible curvature decays of expanding soliton. The upper bound in \eqref{quadratic decay for expander} is always satisfied on conical expander. If $\lim_{x\to \infty} r^2|\Rm|=0$, then the upper and lower estimates of $\Rm$ in \eqref{exp decay for expander} are due to Deruelle \cite{Deruelle-2017} and the first named author \cite{Chan-2020} respectively. Dichotomy result in the expanding case similar to Theorems \ref{4 dim dichotomy} and \ref{n dim dichotomy} seems to be quite promising. However using the existence and compactness results for Ricci expander by Deruelle \cite{Deruelle-2016, Deruelle-2017}, one can prove that the above dichotomy fails in $3$ dimensional expanding case: 
\begin{thm}\cite{Deruelle-2016, Deruelle-2017}\label{counter eg expander}
There exists a $3$ dimensional complete noncompact asymptotically conical expanding gradient Ricci soliton $(M, g, f)$ with nonnegative curvature operator $\Rm\geq 0$ and 
\be\label{failure}
0=\liminf_{x\to\infty} v|\Rm|<\limsup_{x\to \infty}v|\Rm|<\infty,
\ee
where $v=\frac{n}{2}-f$ and $\lim_{x\to\infty}4r^{-2}v=1$.
\end{thm}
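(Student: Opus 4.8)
The plan is to realize the required soliton as a Ricci expander emerging from a carefully chosen metric cone, using Deruelle's existence and compactness theory \cite{Deruelle-2016, Deruelle-2017}. The first step is to build the link of the asymptotic cone. Take $X=\mathbb{S}^2$ and construct a smooth rotationally symmetric metric $g_X=ds^2+\psi(s)^2\,d\theta^2$ whose Gauss curvature $K_X=-\psi''/\psi$ satisfies $K_X\geq 1$ everywhere, with $K_X\equiv 1$ on a nonempty open equatorial band $U$ and $K_X>1$ on the polar caps $V$. On $U$ the equation $-\psi''/\psi=1$ forces $\psi$ to coincide with the round profile $\sin$, while on $V$ one prescribes $-\psi''/\psi>1$; since $K_X\geq 1$ with strict inequality on $V$ is compatible with the Gauss--Bonnet identity $\int_{\mathbb{S}^2}K_X\,dA=4\pi$ (it merely forces $\mathrm{Area}(X)<4\pi$), such a profile exists. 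The associated cone $(C(X),g_C)$ with $g_C=dt^2+t^2g_X$ then has nonnegative curvature operator, because its radial $2$-planes are flat and the sectional curvature of the plane tangent to the cross-section equals $(K_X-1)t^{-2}\geq 0$; moreover $g_C$ is genuinely flat exactly over $U$ and strictly curved over $V$.

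Next I would run Deruelle's expander construction on this cone. Deruelle's existence theorem \cite{Deruelle-2016} applies to cones whose link has strictly positive curvature operator, so I would first approximate $g_X$ by metrics $g_X^{\varepsilon}$ with $K_X^{\varepsilon}>1$ everywhere, obtain for each the unique expanding gradient Ricci soliton with $\Rm>0$ asymptotic to $C(X,g_X^{\varepsilon})$ at polynomial rate $\tau=2$, and then extract a limit as $\varepsilon\to 0$ using Deruelle's compactness and the uniform rate-$2$ asymptotic estimates \cite{Deruelle-2017}. This yields a complete noncompact expanding gradient Ricci soliton $(M^3,g,f)$ with $\Rm\geq 0$ that is asymptotically conical with asymptotic cone $(C(X),g_C)$ in the sense of Definition \ref{AC expander}; in particular the estimates \eqref{cone derivative estimates for metric} and the identity \eqref{f in cone} hold through the asymptotic diffeomorphism $\phi$.

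Finally I would read off the curvature oscillation. Since $h:=(\phi^{-1})^{*}g-g_C$ obeys $|\na_C^k h|_{g_C}=O(t^{-2-k})$ by \eqref{cone derivative estimates for metric}, a standard comparison shows that the curvature of $g$ differs from that of the cone by $O(t^{-4})$ on the overlap region; by \eqref{f in cone}, $v=\tfrac n2-f\sim\tfrac14 t^2$ and $r\sim t$ at infinity. Along rays into a curved cap direction $\omega\in V$ we have $|\Rm(g_C)|\sim c\,(K_X(\omega)-1)\,t^{-2}>0$, hence $v|\Rm|\to\tfrac c4(K_X(\omega)-1)>0$; together with the upper bound in \eqref{quadratic decay for expander}, which always holds on conical expanders, this gives $0<\limsup_{x\to\infty}v|\Rm|<\infty$. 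Along rays into the flat band direction $\omega\in U$ we have $\Rm(g_C)\equiv 0$, so $|\Rm(g)|=O(t^{-4})$ and $v|\Rm|=O(t^{-2})\to 0$, yielding $\liminf_{x\to\infty}v|\Rm|=0$. This is precisely \eqref{failure}. The step I expect to be the main obstacle is the degenerate region $U$: one must justify existence of the expander over the only-nonnegatively-curved cone and, more delicately, propagate the rate-$2$ asymptotics uniformly through the approximation $g_X^{\varepsilon}\to g_X$ so as to secure the sharp faster-than-quadratic decay $|\Rm(g)|=O(t^{-4})$ over $U$ in the limit.
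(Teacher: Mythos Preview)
Your proposal is correct and follows essentially the same route as the paper: choose a link metric on $\mathbb{S}^2$ with Gauss curvature $\geq 1$ but not identically $1$, approximate by strictly curved links to invoke Deruelle's existence, pass to a limit via Deruelle's compactness, and read off the oscillation of $v|\Rm|$ from the curvature of the asymptotic cone. The paper's version is slightly leaner --- it takes any non-round positively curved metric (an ellipsoid) rescaled so that $\min K_X=1$ holds at a single point rather than on an open band, and approximates simply by $h_i=c_ih$ with $c_i\nearrow 1$; your anticipated ``main obstacle'' of propagating the rate-$2$ asymptotics through the limit is precisely what Deruelle's compactness theorem \cite[Theorem~1.7]{Deruelle-2017} supplies, so the conclusion $\liminf v|\Rm|=0$ follows directly from the limit identity $\lim_{t\to\infty}4v|\Rm|(\phi^{-1}(t,\omega))=|\Rm(g_C)|(1,\omega)$ without further work.
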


\begin{remark}
Since $0=\liminf_{x\to\infty} v|\Rm|$, $M$ doesn't satisfy the lower bound in \eqref{quadratic decay for expander}. The upper estimate in \eqref{exp decay for expander} doesn't hold as $\limsup_{x\to \infty}v|\Rm|>0$.
\end{remark}
Though Theorem \ref{counter eg expander} is not explicitly stated in \cite{Deruelle-2016, Deruelle-2017}, it is essentially due to Deruelle and is a direct consequence of the results in \cite{Deruelle-2016, Deruelle-2017}. We shall include its proof in the appendix for the sake of completeness.

\bigskip

The paper is organized as follows. In Section 2, we include the basic preliminaries of steady soliton. In Section 3, we show the exponential curvature decay and prove Theorems \ref{4 dim dichotomy} and \ref{n dim dichotomy}. Sections 4 and 5 will then be devoted to the proofs of Theorems \ref{fast decay soliton} and \ref{fast decay Rm soliton}. In Section 6, we will prove Corollary \ref{GH limit}. Finally, the proof of Theorem \ref{counter eg expander} will be presented in the appendix.\\

{\sl Acknowledgement}: The authors are indebted to Jiaping Wang for helpful discussions and suggestions. The authors would also like to thank Bennett Chow, Zilu Ma, Ovidiu Munteanu and Yongjia Zhang for valuable comments on this work. The first named author was partially supported by an AMS–Simons Travel Grant and would like to express his gratitude to Lei Ni for constant encouragement.

\section{Preliminaries}
Let $(M,g)$ be an $n$ dimensional connected smooth Riemannian manifold and $f$ be a smooth function on $M$. $\nabla$ is the Levi Civita connection of $g$. $\Rm$, $\Ric$ and $R$ denote the Riemann, Ricci and scalar curvatures respectively. $(M,g,f)$ is said to be a gradient steady Ricci soliton with the potential function $f$ if
\be\label{eq-RS-22}
\Ric+\na^2 f=0.
\ee
The steady soliton is complete if $(M,g)$ is a complete Riemannian manifold. $(M,g,f)$ is a gradient steady K\"{a}hler Ricci soliton if $M$ satisfies (\ref{eq-RS-22}) and is a complex manifold with complex structure $J$ and $g$ is a K\"{a}hler metric compatible with $J$ on $M$. 
Ricci soliton is a self similar solution to the Ricci flow. Given a complete steady gradient Ricci soliton, we consider $g(t):=\varphi_t^*g$, where $t$ $\in \R$ and $\varphi_t$ is the flow of $\na f$ with  $\varphi_0= id$. Then $g(t)$ is an ancient solution to the Ricci flow: 
\be\label{eq-RF-1}
\begin{split}
\dfrac{\partial g(t)}{\partial t}&=-2\Ric(g(t)),\\
g(0)&=g.
\end{split}
\ee

It was shown by Chen \cite{Chen-2009} that any complete ancient solution to the Ricci flow must have nonnegative scalar curvature (see also \cite{Zhang-2009}). Using the strong maximum principle, we see that any complete steady gradient Ricci soliton must have positive scalar curvature $R>0$ unless it is Ricci flat. It is also known that compact steady soliton must be Ricci flat \cite{Chowetal-2007}. Upon scaling the metric by a constant if necessary, Hamilton \cite{Hamilton-1995} showed that for a complete steady gradient Ricci soliton,
\be\label{conserved eqn}
|\na f|^2+R=1 \text{  on  } M,
\ee
we adopt the above scaling convention throughout this article. By virtue of \eqref{eq-RS-22} and the Ricci identity, 
for any vectors $X, Y$ and $Z$
\be\label{Ric identity}
\na_Z\Ric(X,Y)-\na_Y\Ric(X,Z)=R(Z,Y,X,\na f).
\ee
Hereinafter, we fix a point $p_0$ in $M$, then for any $x$ in $M$, $r$, $r(x)$ and $d(x, p_0)$ will be used interchangeably and refer to the distance between $x$ and $p_0$. 

Then, for any smooth function $\omega$, we define the weighted Laplacian w.r.t. $\omega$ by $\Delta_{\omega}:=\Delta- \na \omega\cdot \na $, where $\Delta$ is Beltrami Laplacian operator. The following identities are well known for steady gradient Ricci soliton (see \cite{Chowetal-2007})
\begin{eqnarray}
\label{f lap v}\D v&=&1\\
\Delta f+R&=&0\\
\label{na R=Ric}\na R&=&2\Ric(\na f)\\
\label{f lap R}\D R&=&-2|\Ric|^2,
\end{eqnarray}
where $v=-f$. When $f$ is proper, i.e. 
$$\lim_{x\to\infty}f(x)=-\infty,$$
by adding a constant to $f$ if necessary, we may assume that $v=-f\geq 10$ on $M$. Furthermore by \cite[Lemma 2]{Chan-2019}, $f$ and $v$ must be distance-like and satisfy
\be\label{pot est vs r}
\lim_{x\to\infty}\frac{v}{r}=-\lim_{x\to\infty}\frac{f}{r}=1.
\ee
Since we assume that the scalar curvature $R\longrightarrow 0$ as $x\to\infty$, $\lim_{x\to\infty}|\na f|^2=\lim_{x\to\infty}1-R=1$ and the level sets $\Sigma_{\tau}:=\{-f=\tau\}$ are smooth compact hypersurfaces for all large $\tau$. Moreover, $\Sigma_{\tau}$ are connected by a result of Munteanu-Wang \cite{MunteanuWang-2011}.




\section{Curvature decay}
The goal of this section is to prove the following key proposition
\begin{prop}\label{level sets exp decay}
Let $(M^n,g, f)$ be a complete noncompact non Ricci flat steady gradient Ricci soliton of real dimension $n$ with proper potential function. Further suppose that the following conditions are satisfied:
\begin{itemize} 
\item $|\Ric|\leq A_0R$ on $M$ for some constant $A_0>0$;
\item $|\Rm|\leq A_1(r+1)^{-1}$ on $M$ for some constant $A_1>0$;
\item $\liminf_{x\to\infty}rR=0$.
\end{itemize}
Then for some constant $C>0$
\be
|\Rm|\leq Ce^{-r}.
\ee
\end{prop}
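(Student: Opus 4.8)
The plan is to reduce the statement to an exponential bound on the scalar curvature $R$, and to obtain the latter by analyzing a single conjugated curvature quantity with the drift maximum principle along the level sets of $v=-f$, in the spirit of \cite{Brendle-2014, MunteanuWang-2019}. Recall the normalized identities $|\nabla v|^2=1-R$, $\Ric=\nabla^2 v$, $\Delta v=R$, $\nabla R=2\Ric(\nabla f)$ and $\Delta_f R=-2|\Ric|^2$, and that by \eqref{pot est vs r} the function $v$ is distance-like with $v/r\to1$, so the slices $\Sigma_\tau=\{v=\tau\}$ are compact for large $\tau$. The hypothesis $|\Ric|\le A_0R$ gives at once the gradient bound $|\nabla R|=2|\Ric(\nabla f)|\le 2A_0R$, i.e. $|\nabla\log R|\le 2A_0$, so that $R$ varies at a controlled rate along and across the slices. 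I first claim it suffices to prove $R\le Ce^{-r}$: then $\Ric$ also decays like $e^{-r}$, and the full bound is recovered by running the argument below with $R$ replaced by $|\Rm|$, using the soliton's elliptic identity $\Delta_f\Rm=\Rm\ast\Rm$ together with Shi's derivative estimates (available since $|\Rm|\le A_1$ is bounded). The scalar case is the heart of the matter.

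The key object is $q:=Re^{v}$. A direct computation from $\nabla R=2\Ric(\nabla f)$, $\Delta_f R=-2|\Ric|^2$ and $\Delta v=R$ yields the exact drift equation
\[
\Delta_v q=e^{v}\bigl(R^2-2|\Ric|^2\bigr),\qquad \Delta_v:=\Delta-\langle\nabla v,\nabla\,\cdot\,\rangle,
\]
and, since $0\le 2|\Ric|^2\le 2A_0^2R^2$ and $R=qe^{-v}$, this gives the closed inequality $|\Delta_v q|\le (1+2A_0^2)\,q^2e^{-v}$ on $\{v\ge\tau_0\}$. The point of conjugating by $e^{v}$ is precisely the normalization $|\nabla v|^2+R=1$: it makes the critical rate $1$ appear as the threshold between bounded and growing $\Delta_v$-(almost-)harmonic functions. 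Indeed, proving that $q$ is bounded is equivalent to $R\le Ce^{-v}\le C'e^{-r}$.

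On the exterior region $\{v\ge\tau_0\}$ foliated by the $\Sigma_\tau$, I would run the (drift) maximum principle for $q$ along the level-set flow. The radially constant $\Delta_v$-harmonic functions split into a bounded mode and a mode $\sim e^{v}$; the linear (Bryant-type) behaviour $R\sim c/r$ corresponds to $q\sim (c/v)e^{v}$, i.e. essentially the growing mode, while exponential decay corresponds to $q$ bounded. The hypothesis $\liminf_{x\to\infty}rR=0$ furnishes a sequence of slices on which $\tau\,\max_{\Sigma_\tau}R\to0$ (passing from a sequence of points to the slices via $|\nabla\log R|\le 2A_0$ and the bounded diameter of the nearly-flat slices), that is, on which $q$ is far smaller than $e^{\tau}/\tau$. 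This excludes any growing component and pins $\max_{\Sigma_\tau}q$ to a bounded quantity, the small inhomogeneity $|\Delta_v q|\le Cq^2e^{-v}$ being absorbed once $q$ is controlled on a sequence of slices. Quantitatively this requires handling the cross-sectional Laplacian term in the decomposition of $\Delta_v q$ along $\Sigma_\tau$; here I would invoke the $\epsilon$-flatness of the level sets (Proposition \ref{level sets are e flat}), together with $|\nabla v|\to1$ and the gradient bound, to convert the pointwise equation into a genuine differential inequality for $\max_{\Sigma_\tau}q$ and close it.

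The main obstacle is that no noncollapsing is assumed, so the slices may collapse and their diameters degenerate: there is no smooth limit cylinder to linearize around, and the spectral gap of the cross-section — which would make the splitting into bounded and growing modes clean — is not available a priori. Consequently the mode selection must be carried out by a robust barrier/maximum-principle argument, uniform in $\tau$, relying only on the quantitative flatness of the slices and on the sharp hypothesis $\liminf_{x\to\infty}rR=0$ to rule out the critical rate-$1$ growing mode. Making this degenerate mode picture into an honest, closable differential inequality for $\max_{\Sigma_\tau}q$ is the crux; once $q$ is bounded, the passage to $|\Rm|\le Ce^{-r}$ is routine.
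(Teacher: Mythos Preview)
Your overall architecture---a drift maximum principle along the level sets of $v$---is the right one, and the identity $\Delta_v(Re^{v})=e^{v}(R^2-2|\Ric|^2)$ is correct. The genuine gap is the step where you pass from the pointwise hypothesis $\liminf_{x\to\infty}rR=0$ to a slice-wise statement $\tau\max_{\Sigma_\tau}R\to0$ along a sequence. Your proposed mechanism, integrating $|\nabla\log R|\le 2A_0$ across a slice of ``bounded diameter'', does not close: by Lemma~\ref{level set diam est} the intrinsic diameter of $\Sigma_\tau$ in $g$ is only $O(\sqrt\tau)$, not bounded, so the gradient bound gives at best $\max_{\Sigma_\tau}R\le e^{2A_0C\sqrt\tau}\min_{\Sigma_\tau}R$, which is useless. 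Invoking Proposition~\ref{level sets are e flat} to call the slices ``nearly flat'' is circular, since that proposition is deduced \emph{from} the present one. The paper obtains the slice-wise smallness by a considerably more delicate route (Lemmas~\ref{local fast conv} and~\ref{local fast conv2}): one pulls back the associated Ricci flow to balls of a fixed rescaled radius, uses Hamilton compactness plus the strong minimum principle for $R$ to force the limit scalar curvature to vanish, and then chains finitely many such balls---their number bounded by the diameter estimate---along a level-set geodesic to cover all of $\Sigma_{-f(p_i)}$.

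With that input in hand, the paper's maximum-principle step also differs from yours in a way that avoids the delicate ``mode selection'' you flag. It works with $vR$ rather than $Re^{v}$: at an outermost point where $vR=\varepsilon$ one gets a quadratic inequality $0\ge -2A_0^2(vR)^2+vR-C_0v^{-1/2}$ forcing $vR$ into one of two small intervals, both contradicting $vR=\varepsilon$ for $\varepsilon$ small and $v$ large. This yields $rR\to0$ uniformly, after which the exponential decay of $R$ (hence of $|\Ric|$) is a citation to \cite{Chan-2019}. The final upgrade to $|\Rm|\le Ce^{-r}$ is not obtained by rerunning the argument with $|\Rm|$ as you suggest, but via Deruelle's local Shi-type estimate \cite{Deruelle-2017} giving $|\nabla^2\Ric|\le Ce^{-r}$, followed by integrating the ODE for $|\Rm|^2$ along the flow of $-\nabla f/|\nabla f|^2$.
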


\vskip 2mm


We will make use of the diameter and curvature estimates by  Deng-Zhu \cite{DengZhu-2018, DengZhu-2020.2}. By viewing the soliton as a solution to the Ricci flow and our assumption $|\Rm|\leq C(r+1)^{-1}$ implies, by the Shi's estimate \cite{ChowLuNi-2006},  that for any nonnegative integer $k$, there exists positive constant $C_k$ such that on $M$
\be\label{Shi est}
|\nabla^k\Rm|\leq C_k(r+1)^{-\left(\frac{k+2}{2}\right)}.
\ee
Using \eqref{Shi est}, we may argue in the same way as in \cite{DengZhu-2018, DengZhu-2020.2} to get the following lemma
\begin{lma}\cite{DengZhu-2018, DengZhu-2020.2}\label{level set diam est} If $(M^n,g,f)$ is a complete noncompact steady gradient Ricci soliton of real dimension $n$ with proper potential function $f$ and linear Riemann curvature decay $|\Rm|\leq A_1(r+1)^{-1}$,
then there exists constant $C>0$ such that for all sufficiently large $\tau$
\be
\text{diam}(\Sigma_{\tau})\leq C\sqrt{\tau},
\ee
where $\text{diam}(\Sigma_{\tau})$ denote the intrinsic diameter of the level set $\Sigma_{\tau}=\{-f=\tau\}$ w.r.t. induced metric by $g$.
\end{lma}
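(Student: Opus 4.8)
The plan is to avoid tracking the induced metric directly and instead exploit the self-similar structure of the soliton together with a Hamilton--Perelman type distance distortion estimate. Write $v=-f$, so that $\nabla^2 v=\Ric$, $|\nabla v|^2=1-R\to 1$, and $v/r\to 1$ by \eqref{pot est vs r}; in particular $r\approx\tau$ on $\Sigma_\tau$ for large $\tau$. Let $\varphi_t$ be the flow of $\nabla f$ and $g(t)=\varphi_t^*g$ the associated self-similar Ricci flow, so that $\varphi_t\colon(M,g(t))\to(M,g)$ is an isometry. Since $\frac{d}{dt}v(\varphi_t x)=\langle\nabla v,\nabla f\rangle=-|\nabla f|^2=-(1-R)\approx-1$, the point $\varphi_t x$ moves from $\Sigma_\tau$ toward the level $v\approx\tau-t$. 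Choosing $t=\tau-\tau_0$ and using that $\varphi_t$ is a $g(t)$-to-$g$ isometry, bounding $\mathrm{diam}_g(\Sigma_\tau)$ reduces to controlling, for a fixed pair $p,q\in\Sigma_\tau$, how $d_{g(t)}(p,q)=d_g(\varphi_t p,\varphi_t q)$ grows as the flow runs backward from $t=\tau-\tau_0$ — where $\varphi_t p,\varphi_t q$ lie in the fixed compact region $\{v\approx\tau_0\}$, so $d_{g(t)}(p,q)\le \mathrm{diam}_g(\Sigma_{\tau_0})=:D_0$ — down to $t=0$, where it equals the ambient distance $d_g(p,q)$.

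The key point is that the curvature decays quantitatively along this flow: by naturality $|\Rm|_{g(t)}(x)=|\Rm|_g(\varphi_t x)\le A_1(r(\varphi_t x)+1)^{-1}$, and since $r(\varphi_t x)\approx v(\varphi_t x)\approx\tau-t$ one gets $|\Ric|_{g(t)}\lesssim(\tau-t)^{-1}$ on the relevant region, and likewise on $g(t)$-balls of radius $r_0\sim\sqrt{\tau-t}$ (using $|\nabla v|\le1$ to bound the oscillation of $v$ on such balls). I would then invoke Hamilton's changing-distance estimate: if $\Ric_{g(t)}\le(n-1)K$ on the $r_0$-balls about $p$ and $q$ and $d_{g(t)}(p,q)\ge 2r_0$, then $\frac{d}{dt}d_{g(t)}(p,q)\ge-2(n-1)\big(\tfrac23 Kr_0+r_0^{-1}\big)$. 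Optimizing $r_0\sim K^{-1/2}\sim\sqrt{\tau-t}$ gives $\big|\tfrac{d}{dt}d_{g(t)}(p,q)\big|\lesssim(\tau-t)^{-1/2}$ whenever the distance exceeds $2r_0$; in the complementary regime $d_{g(t)}(p,q)<2r_0\lesssim\sqrt{\tau-t}$ the distance is already $O(\sqrt\tau)$. Integrating the inequality backward, with a short continuity argument to handle the threshold $d=2r_0$, yields
\[
d_g(p,q)=d_{g(0)}(p,q)\le D_0+C\!\int_0^{\tau-\tau_0}\frac{dt}{\sqrt{\tau-t}}=D_0+2C\big(\sqrt{\tau}-\sqrt{\tau_0}\big)\le C'\sqrt{\tau},
\]
so the extrinsic diameter of $\Sigma_\tau$ is $O(\sqrt\tau)$.

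It remains to pass from extrinsic to intrinsic diameter. Given $p,q\in\Sigma_\tau$, a minimizing ambient geodesic $\sigma$ between them has length $\le C'\sqrt\tau$, and since $\big|\tfrac{d}{ds}v(\sigma)\big|\le|\nabla v|\le1$ it stays in the slab $\{\tau-C'\sqrt\tau\le v\le\tau\}$. Projecting $\sigma$ back onto $\Sigma_\tau$ along the gradient flow of $v$ produces a curve in $\Sigma_\tau$ from $p$ to $q$; because the induced metrics evolve by $\partial_\tau g_\tau=\frac{2}{|\nabla v|^2}\Ric|_{T\Sigma_\tau}$ with $|\Ric|\lesssim 1/\tau$, the length distortion of this projection over a $v$-range of width $C'\sqrt\tau$ is at most $\exp\big(O(\sqrt\tau/\tau)\big)\le 2$ for large $\tau$, so $d_{\Sigma_\tau}(p,q)\le 2\,d_g(p,q)$ and $\mathrm{diam}(\Sigma_\tau)\le C\sqrt\tau$. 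The main obstacle, and the reason the naive approach of integrating $|\Ric|\le A_1/r$ along the level-set flow fails, is exactly that such a pointwise bound only gives polynomial growth $\tau^{O(A_1)}$ with an uncontrolled exponent; the improvement to the sharp exponent $\tfrac12$ hinges on using the minimality of geodesics through Hamilton's second-variation estimate, whereby the self-similar curvature profile $\sim(\tau-t)^{-1}$ makes $\int(\tau-t)^{-1/2}\,dt\sim\sqrt\tau$ the controlling integral.
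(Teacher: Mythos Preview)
Your approach is correct and is precisely the Deng--Zhu argument the paper defers to (the paper itself gives no proof): run the self-similar Ricci flow, apply the Hamilton--Perelman distance distortion estimate with $K\sim(\tau-t)^{-1}$ and $r_0\sim\sqrt{\tau-t}$ to get the extrinsic bound, then upgrade to intrinsic distance by projecting a minimizing geodesic onto $\Sigma_\tau$ along the gradient flow. Two small glosses to tighten: since $\tfrac{d}{dt}v(\varphi_t p)=-(1-R)$ with $\int_0^{\tau-\tau_0}R\,dt\lesssim\ln\tau$, the point $\varphi_{\tau-\tau_0}(p)$ lands on $\Sigma_{\tau_0+O(\ln\tau)}$ rather than $\Sigma_{\tau_0}$, and the geodesic slab should be two-sided $\{|v-\tau|\le C'\sqrt{\tau}\}$ (there is no convexity of $v$ without $\Ric\ge 0$); neither affects the $O(\sqrt{\tau})$ conclusion.
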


With the diameter estimate on the level sets, we may proceed as in \cite{DengZhu-2018} to show the following lemma:
\begin{lma}\cite{DengZhu-2018}\label{local fast conv} Let $(M^n,g,f)$ be a complete noncompact steady gradient Ricci soliton of real dimension $n$ with proper potential function $f$. Further suppose that 
the curvature tensor $\Rm$ decays at least linearly i.e. 
\be\label{lin Rm decay cond in ball}
|\Rm|\leq A_1(r+1)^{-1}.
\ee
If in addition, $r(p_i)R(p_i) \to 0$ for a sequence of points $p_i\to \infty$, then there exists a positive constant $\varepsilon (n, A_1)$ such that 
\be
\sup_{B(p_i,\varepsilon; r(p_i)^{-1}g)}rR \longrightarrow 0 \text{  as } i \to \infty,
\ee
where $B(p_i,\varepsilon; r(p_i)^{-1}g)$ is the geodesic ball on $M$ centered at $p_i$ with radius $\varepsilon$ w.r.t. metric $r(p_i)^{-1}g$.
\end{lma}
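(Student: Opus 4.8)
The plan is to argue by contradiction and reduce the claimed decay to the strong maximum principle for scalar curvature on a blow-up limit. Write $\rho_i:=r(p_i)\to\infty$ and rescale by $\hat g_i:=\rho_i^{-1}g$, so that the scalar curvature of $\hat g_i$ is $\hat R_i=\rho_i R$. The ball $B(p_i,\varepsilon;\hat g_i)$ has $g$-radius $\varepsilon\sqrt{\rho_i}=o(\rho_i)$, so every $x$ in it obeys $r(x)/\rho_i\to1$ uniformly; hence $rR$ and $\hat R_i$ agree up to a factor $1+o(1)$ on the ball, and it suffices to show $\sup_{B(p_i,\varepsilon;\hat g_i)}\hat R_i\to0$.

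First I would record the uniform regularity coming from Shi's estimate \eqref{Shi est}: in $\hat g_i$ one has $|\Rm_{\hat g_i}|_{\hat g_i}\le A_1(1+o(1))$ together with uniform bounds on all covariant derivatives on the ball, and $R(q)\le c_n|\Rm|(q)\le c_nA_1/r(q)$ for each $q$ there, so $R(q)\rho_i\le c_nA_1(1+o(1))$. Now suppose the conclusion fails for an $\varepsilon$ to be fixed later. After passing to a subsequence there are $\delta>0$ and points $q_i\in B(p_i,\varepsilon;\hat g_i)$ with $r(q_i)R(q_i)\ge\delta$, whence $R(q_i)\rho_i\in[\delta/2,\,2c_nA_1]$ for large $i$. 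Rescale this time by the curvature, $\tilde g_i:=R(q_i)g$, with associated eternal Ricci flows $\tilde g_i(t)$. Then $R_{\tilde g_i}(q_i)=1$, while $\rho_iR(p_i)\to0$ gives $R_{\tilde g_i}(p_i)=R(p_i)/R(q_i)\le(2/\delta)\rho_iR(p_i)\to0$; and since $\tilde g_i=R(q_i)\rho_i\,\hat g_i$ with $R(q_i)\rho_i$ bounded above and below, $d_{\tilde g_i}(p_i,q_i)\le\sqrt{2c_nA_1}\,\varepsilon$ stays bounded and $(M,\tilde g_i(t),q_i)$ has uniformly bounded geometry near $q_i$.

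If one could pass to a smooth pointed limit flow $(M_\infty,g_\infty(t),q_\infty)$, the contradiction would be immediate: $R_{g_\infty}(q_\infty,0)=1$, whereas $p_i\to p_\infty$ with $R_{g_\infty}(p_\infty,0)=0$ and $d(p_\infty,q_\infty)\le\sqrt{2c_nA_1}\,\varepsilon$. As $g_\infty(t)$ is a complete eternal solution, Chen's theorem gives $R_{g_\infty}\ge0$, and $R$ is a supersolution, $(\partial_t-\Delta)R=2|\Ric|^2\ge0$; since it attains $0$ at the interior spacetime point $(p_\infty,0)$, the strong minimum principle forces $R\equiv0$ on $M_\infty\times(-\infty,0]$, hence $R_{g_\infty}(q_\infty,0)=0$, contradicting $R_{g_\infty}(q_\infty,0)=1$ (if $p_\infty=q_\infty$ one gets $1=0$ outright). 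Note that no smallness of $\varepsilon$ enters this step; $\varepsilon(n,A_1)$ is only used to keep $q_i$ within a fixed $\tilde g_i$-distance of $p_i$.

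The main obstacle is exactly that, absent any noncollapsing hypothesis, there is no uniform injectivity-radius lower bound, so the Cheeger--Gromov limit above need not exist. This is where I would exploit the level-set geometry: by Lemma \ref{level set diam est} the level sets have $\hat g_i$-diameter $\le C$, so the collapse happens along the level-set directions while the curvature stays bounded. I would therefore pass to the local universal covers of the balls $B(q_i,10\sqrt{2c_nA_1}\,\varepsilon;\tilde g_i)$, which by collapsing-with-bounded-curvature theory (unwrapping the short loops) acquire a uniform injectivity-radius bound and uniformly bounded geometry; since $R$ is a local isometry invariant it lifts with the same values at the lifts of $p_i$ and $q_i$, so the limiting and maximum-principle argument runs verbatim upstairs and yields the same contradiction. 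Making this equivariant convergence precise in the collapsed regime is the technical heart of the argument; the rest is the scaling bookkeeping above together with \eqref{Shi est}.
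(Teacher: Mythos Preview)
Your overall strategy---argue by contradiction, blow up, and apply the strong minimum principle to the scalar curvature on a limit flow---matches the paper's exactly. The substantive difference is in how the compactness step is handled in the absence of a noncollapsing hypothesis. You propose to pass to local universal covers and invoke equivariant convergence in the collapsed regime; the paper instead pulls back via the exponential map $F_i=\exp_{p_i}^{g_i}\circ\eta_i$, which by the Rauch comparison theorem is a local diffeomorphism on a Euclidean ball $B_\delta(0)$ of radius $\delta=\delta(n,A_1)\sim \min(1/2,\pi/\sqrt{2A_1})$. The pulled-back flows $F_i^*g_i(t)$ then live on the fixed domain $B_\delta(0)\subset\R^n$ with uniform $C^k$ bounds from Shi's estimate, so smooth subsequential convergence is immediate---no injectivity radius, no covers, no equivariance needed. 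This exponential-map pullback is precisely the concrete implementation of your ``unwrap the short loops'' idea, and it renders the technical heart you flag essentially trivial. The constant $\varepsilon(n,A_1)$ in the statement is then just $\delta/2$.

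A few minor remarks: your second rescaling by $R(q_i)$ is unnecessary---the paper works at scale $r(p_i)^{-1}$ throughout and runs the minimum principle on the local limit centered at (the image of) $p_i$, concluding $R_{g_\infty}\equiv 0$ on $B_\delta(0)\times(-\tfrac{1}{10},0]$ directly. Your appeal to Lemma~\ref{level set diam est} is a red herring here; that diameter estimate is used only in the next lemma to propagate the decay across the whole level set. And you do not need Chen's theorem for $R_{g_\infty}\geq 0$: the limit is a local (incomplete) flow, but nonnegativity is inherited directly from $R_g>0$ under smooth convergence.
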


\begin{proof}
The argument is due to Deng-Zhu \cite{DengZhu-2018} and we  include the proof for the sake of completeness. Let $r_i=r(p_i)$ and $g_i=r_i^{-1}g$.
We consider the Ricci flow associated to the steady soliton, i.e. $g(t):=\phi_t^*g$, $\phi_t$ is the flow of the vector field $\na f$ with $\phi_0=\text{  id}$ and $t$ $\in \R$. For any $z$ $\in B(p_i, 1; g_i)=B(p_i, \sqrt{r_i}; g)$, by the triangle inequality
\be\label{dist est for conv}
r_i+\sqrt{r_i}\geq r(z) \geq r_i-\sqrt{r_i}.
\ee
Moreover, using $|\na f|\leq 1$ (see \eqref{conserved eqn}), for $t$ $\in [-\frac{1}{10}, \frac{1}{10}]$, we have $d_g(\phi_{r_it}(z), z)\leq \frac{r_i}{10}$ and 
\be\begin{split}
r(\phi_{r_it}(z))&\geq r(z)-\frac{r_i}{10}\\
&\geq \frac{r_i}{2}.
\end{split}
\ee
Let $g_i(t):=\frac{1}{r_i}g(r_it)$, then $g_i(0)=g_i$ and by \eqref{lin Rm decay cond in ball}, on $B(p_i, 1; g_i)\times [-\frac{1}{10}, \frac{1}{10}]$
\be\label{C0 est of flow}\begin{split}
|\Rm(g_i(t))|(z)&=r_i|\Rm(g(r_it))|(z)\\
&=r_i|\Rm(g)|(\phi_{r_it}(z))\\
&\leq \frac{A_1r_i}{r(\phi_{r_it}(z))}\\
&\leq 2A_1.
\end{split}
\ee
Thank to the Shi's estimates \eqref{Shi est}, for all positive integer $k$
\be\label{Ck est of flow}\begin{split}
|\na^k\Rm(g_i(t))|(z)&=r_i^{\frac{k+2}{2}}|\na ^k\Rm(g(r_it))|(z)\\
&=r_i^{\frac{k+2}{2}}|\na ^k\Rm(g)|(\phi_{r_it}(z))\\
&\leq C_k2^{\frac{k+2}{2}}.
\end{split}
\ee
We are about to take subsequential limit of $g_i(t)$. Let $\eta_i: (\R^n, g_e)\longrightarrow (T_{p_i}M, g_i(p_i))$ be a linear isometry with $\eta(0)=0$. By the Rauch Comparison theorem, the map $F_i:=\exp^{g_i(0)}_{p_i}\circ\, \eta_i$ is a local diffeomorphism on $\{x\in \R^n:$ $|x|<\min{\left(\frac{1}{2}, \frac{\pi}{\sqrt{2A_1}}\right)} \}$, where $\exp^{g_i(0)}_{p_i}$ is the exponential map at $p_i$ w.r.t. $g_i(0)$. By the Hamilton compactness theorem, \eqref{C0 est of flow} and \eqref{Ck est of flow} (see also \cite{Hamilton-1995.2}, \cite{Chowetal-2007}, \cite[Lemma 4.4]{DengZhu-2018} and \cite[Theorem 12]{Chan-2020}), there exist positive constant $\delta=\delta(n, A_1)<\min{\left\{\frac{1}{2}, \frac{\pi}{\sqrt{2A_1}}\right\}}$ and a subsequence $i_l$ such that as $l\to\infty$
\be\label{local conv of flow}
\big(B_{\delta}(0), F_{i_l}^*g_{i_l}(t)\big)\longrightarrow \big(B_{\delta}(0), g_{\infty}(t)\big)
\ee
in $C^{\infty}_{loc}$ sense on $B_{\delta}(0)\times (-\frac{1}{10}, \frac{1}{10})$, where $B_{\delta}(0):=\{x\in\R^n: |x|<\delta\}$ and $g_{\infty}(t)$ is a solution to the Ricci flow. Note that $\delta$ is independent on the subsequence taken. $F_{i_l}^*g_{i_l}(t)$ has nonnegative scalar curvature, so does $g_{\infty}(t)$. Furthermore by the assumption and smooth convergence
\be\begin{split}
R_{g_{\infty}(0)}(0)&= \lim_{l\to\infty}R_{F_{i_l}^*g_{i_l}(0)}(0)\\
&=\lim_{l\to\infty}r_{i_l}R_{g}(F_{i_l}(0))\\
&=\lim_{l\to\infty}r_{i_l}R_{g}(p_{i_l})\\
&= 0.
\end{split}
\ee
Due to the strong minimum principle,  $R_{g_{\infty}(t)}\equiv 0$ on $B_{\delta}(0)\times (-\frac{1}{10}, 0]$. 
For the above $\delta$, we want to show that the original sequence $p_i$ satisfies
\be
\lim_{i\to\infty}\sup_{B(p_i, \delta/2; g_i)}rR=0,
\ee
Suppose on the contrary, by passing to a subsequence if necessary, we may assume there is constant $\e_0>0$ such that for all $i$
\be\label{contrad}
\sup_{B(p_i, \delta/2; g_i)}rR\geq \e_0.
\ee
By the locally uniform convergence \eqref{local conv of flow} and distance estimate \eqref{dist est for conv}
\be\begin{split}
o(1)&=\sup_{z\in B_{\frac{\delta}{2}}(0)}R_{F_{i_l}^*g_{i_l}(0)}(z)\\
&\geq \sup_{y\in B(p_{i_l}, \frac{\delta}{2}; g_i)}R_{g_{i_l}(0)}(y)\\
&= \sup_{y\in B(p_{i_l}, \frac{\delta}{2}; g_i)}r_{i_l}R_{g}(y)\\
&\geq (1-o(1))\sup_{y\in B(p_{i_l}, \frac{\delta}{2}; g_i)}r(y)R_{g}(y)\\
&\geq (1-o(1)) \e_0,
\end{split}
\ee
which is impossible. This completes the proof of the lemma.
\end{proof}

Using Lemma \ref{local fast conv}, Deng-Zhu \cite{DengZhu-2018} can show a better convergence result.
\begin{lma}\cite{DengZhu-2018}\label{local fast conv2} Under the same notations and assumptions as in Lemma \ref{local fast conv}, we have \be
\sup_{\Sigma_{-f(p_i)}}rR \longrightarrow 0 \text{  as } i \to \infty.
\ee
\end{lma}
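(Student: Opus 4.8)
The plan is to upgrade the local smallness of $rR$ supplied by Lemma \ref{local fast conv} to the whole level set through a finite chaining argument, exploiting that the level sets have uniformly bounded diameter after rescaling. Write $\tau_i=v(p_i)=-f(p_i)$, $r_i=r(p_i)$ and $g_i=r_i^{-1}g$. By \eqref{pot est vs r} we have $v/r\to 1$, so on the level set $\Sigma_{\tau_i}=\{-f=\tau_i\}$ the distance $r$ is comparable to $\tau_i$; in particular $r\to\infty$ and $r\sim\tau_i$ everywhere on $\Sigma_{\tau_i}$. Combining this with Lemma \ref{level set diam est} ($\text{diam}_g(\Sigma_{\tau_i})\le C\sqrt{\tau_i}$) shows that the intrinsic $g_i$-diameter of $\Sigma_{\tau_i}$ is at most $C\sqrt{\tau_i}/\sqrt{r_i}=C(1+o(1))\le 2C$ for all large $i$.

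Suppose the conclusion fails. Then, after passing to a subsequence, there are a constant $\delta_0>0$ and points $q_i\in\Sigma_{\tau_i}$ with $r(q_i)R(q_i)\ge \delta_0$. Join $p_i$ to $q_i$ by a minimizing geodesic $\gamma_i$ inside $\Sigma_{\tau_i}$; its $g_i$-length is at most the intrinsic $g_i$-diameter, hence $\le 2C$. Let $\varepsilon=\varepsilon(n,A_1)$ be the radius furnished by Lemma \ref{local fast conv} and set $N=\lceil 8C/\varepsilon\rceil$, a number independent of $i$. Subdivide $\gamma_i$ into $N$ arcs of equal length by points $p_i=x_i^0,x_i^1,\dots,x_i^N=q_i$, so that consecutive points have ambient $g_i$-distance bounded by their intrinsic distance, namely $d_{g_i}(x_i^j,x_i^{j+1})\le 2C/N\le \varepsilon/4$.

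I would then prove by induction on $j=0,1,\dots,N$ the statement $P(j)$: as $i\to\infty$, $r(x_i^j)\to\infty$, $r(x_i^j)R(x_i^j)\to 0$, and $\sup_{B(x_i^j,\varepsilon/2;\,g_i)}rR\to 0$. The base case $P(0)$ is exactly Lemma \ref{local fast conv}. For the inductive step, $x_i^{j+1}\in B(x_i^j,\varepsilon/4;g_i)$ lies in the ball controlled by $P(j)$, so $r(x_i^{j+1})R(x_i^{j+1})\to 0$; moreover $x_i^{j+1}\in\Sigma_{\tau_i}$ gives $r(x_i^{j+1})\sim\tau_i\to\infty$. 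Hence Lemma \ref{local fast conv} applies to the sequence $\{x_i^{j+1}\}_i$ and yields $\sup_{B(x_i^{j+1},\varepsilon;\,r(x_i^{j+1})^{-1}g)}rR\to 0$. Since $r(x_i^{j+1})/r_i\to 1$ on the fixed level set, the metrics $r(x_i^{j+1})^{-1}g$ and $g_i$ differ by a factor tending to $1$, so $B(x_i^{j+1},\varepsilon/2;g_i)\subset B(x_i^{j+1},\varepsilon;r(x_i^{j+1})^{-1}g)$ for large $i$, giving $P(j+1)$. Finally $P(N)$ applied to $x_i^N=q_i$ forces $r(q_i)R(q_i)\to 0$, contradicting $r(q_i)R(q_i)\ge\delta_0$.

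The main technical point—and the only place the hypotheses beyond Lemma \ref{local fast conv} enter—is guaranteeing that the number of chain points $N$ is \emph{uniform} in $i$ and that the rescaling factors $r(x_i^j)^{-1}$ are mutually comparable; both follow from the level-set diameter estimate (Lemma \ref{level set diam est}) together with $v\sim r$ from \eqref{pot est vs r}, which pins $r$ to $\approx\tau_i$ throughout $\Sigma_{\tau_i}$. Once $N$ is fixed, the smallness of $rR$ propagates through finitely many applications of Lemma \ref{local fast conv}, with no loss to subsequences, because that lemma produces full-sequence convergence at each step.
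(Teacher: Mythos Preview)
Your proposal is correct and follows essentially the same approach as the paper: both argue by contradiction, join $p_i$ to a putative bad point $q_i$ by an intrinsic geodesic in $\Sigma_{\tau_i}$, use the level-set diameter bound (Lemma \ref{level set diam est}) together with $v\sim r$ to ensure a uniformly bounded number of chain points, and then propagate the smallness of $rR$ across the chain via repeated applications of Lemma \ref{local fast conv}. The only cosmetic difference is that the paper passes to a further subsequence to make the number of subdivision points $N_i$ constant, whereas you fix $N=\lceil 8C/\varepsilon\rceil$ from the outset and carry a slightly stronger inductive hypothesis (the $\sup$ over a $g_i$-ball); both are equivalent in spirit.
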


\begin{proof} The proof is due to Deng-Zhu \cite{DengZhu-2018} and we provide a sketch of the argument for reader's convenience. We argue by contradiction, by passing to subsequence if necessary, there exists constant $\e_0>0$ such that for all $i$, we can find $q_i$ $\in \Sigma_{-f(p_i)}$ such that 
\be
r(q_i)R(q_i)\geq \e_0.
\ee
Let $\gamma_i$ be a normalized minimizing intrinsic geodesic joining $p_i$ to $q_i$ on $\Sigma_{-f(p_i)}$ with respect to induced metric $g_i=r(p_i)^{-1}g$, $l(\gamma_i)$ the length of $\gamma_i$ with respect to $g_i$. Then  $\frac{N_i\e}{8}\leq l(\gamma_i)<\frac{(N_i+1)\e}{8}$ for some unique nonnegative integer $N_i$, where $\ve$ is the constant as in Lemma \ref{local fast conv}. Moreover by \eqref{pot est vs r}, for all large $i$ and $a$ $\in \Sigma_{-f(p_i)}$
\be\label{tau eqv r}
\frac{1}{2} r(a)\leq -f(a)\leq 2r(a).
\ee
In view of Lemma \ref{level set diam est}, we have the following upper bound for $N_i$ 
\be
N_i\leq 16C\e^{-1}.
\ee
Hence by taking further subsequence, we may assume $N_i\equiv N_1$ for all $i$. For each $0\leq j\leq N_1$, we define the sequence $p^{j}_i:=\gamma_i\left(\frac{j\e}{8}\right) $. Using \eqref{tau eqv r}, We see that $p^{0}_j=p_i$ and the distance function on $M$  satisfies
\be\label{dist est for whole level set conv}
\begin{split}
d_{r(p_i^j)^{-1}g}(p^{j}_i, p^{j+1}_i)=\sqrt{\frac{r(p_i)}{r(p_i^j)}}\, d_{g_i}(p^{j}_i, p^{j+1}_i)\leq 2d_{g_i}(p^{j}_i, p^{j+1}_i)\leq \frac{\ve}{4}
\end{split}
\ee
and similarly
\be\label{dist est for whole level set conv2}
d_{r(p_i^{N_1})^{-1}g}(p^{N_1}_i, q_i)\leq \frac{\ve}{4}.
\ee
By \eqref{dist est for whole level set conv} and Lemma \ref{local fast conv}, $r(p_i)R(p_i)\to 0$  would imply  $r(p^1_i)R(p^1_i)\to 0$. Applying Lemma \ref{local fast conv} again with $p_i$ replaced by $p^1_i$. we see that  $r(p^2_i)R(p^2_i)\to 0$. Similarly by \eqref{dist est for whole level set conv2} and repeating the same procedure finitely many times, we conclude that
\be
\begin{split}
0&= \lim_{i\to\infty}r(q_i)R(q_i)\\
&\geq \e_0,
\end{split}
\ee 
which is absurd. Result follows.

\end{proof}

To apply the parabolic maximum principle as in \cite{Brendle-2014, MunteanuWang-2019}, we need the evolution equation of $vR$ along the level set flow:
\begin{lma}\label{evolution of vR}
Suppose that $(M^n, g, f)$ is a complete non Ricci flat steady gradient Ricci soliton with proper potential function $\lim_{x\to\infty}f=-\infty$ and linear Riemann curvature decay $|\Rm|\leq A_1(r+1)^{-1}$ for some constant $A_1>0$. Then there exist positive constants $a_0$ and $C_0$ such that on $\{x\in M:$ $ v(x)\geq a_0\}$, $|\na f|^2\geq \frac{1}{2}$ and 
\be
\Delta_{\Sigma_{\tau}}(vR)-\la \na f, \na (vR)\ra \geq -2v|\Ric|^2+R-C_0v^{-\frac{3}{2}},
\ee
where $v:=-f$ and $\Delta_{\Sigma_{\tau}}$ denotes the intrinsic Laplacian of the level set $\Sigma_{\tau}:=\{-f=\tau\}$ with respect to the induced metric by $g$.
\end{lma}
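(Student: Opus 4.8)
The plan is to exploit that $v$ is constant on each level set $\Sigma_{\tau}=\{v=\tau\}$, which collapses the intrinsic Laplacian of $vR$ to $v$ times that of $R$, and then to convert the intrinsic Laplacian of $R$ into ambient quantities governed by the soliton identities. First, since $|\Rm|\le A_1(r+1)^{-1}$ and $v\sim r$ by \eqref{pot est vs r}, we have $R\to 0$ at infinity, so the conserved equation \eqref{conserved eqn} gives $|\na f|^2=1-R\ge\tfrac12$ once $v\ge a_0$ for a suitable $a_0$; this is the first assertion. Write $\nu=\na v/|\na v|$ for the unit normal of $\Sigma_{\tau}$, $\rho^2=|\na f|^2=|\na v|^2$, $S=\Ric(\na f,\na f)$, and $H=\div\nu$ for the mean curvature.

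Because $vR=\tau R$ on $\Sigma_{\tau}$, we have $\Delta_{\Sigma_{\tau}}(vR)=v\,\Delta_{\Sigma_{\tau}}R$, and I would insert the standard hypersurface identity $\Delta_{\Sigma_{\tau}}R=\Delta R-\Hess R(\nu,\nu)-H\la\na R,\nu\ra$. The remaining pieces are read off from \eqref{f lap v}--\eqref{f lap R}: one has $\Hess v=\Ric$, hence $H=\rho^{-1}R-\rho^{-3}S$; the Beltrami Laplacian is $\Delta R=\D R+\la\na f,\na R\ra=-2|\Ric|^2+2S$; and \eqref{na R=Ric} gives $\la\na R,\nu\ra=-2S/\rho$ together with $\la\na f,\na(vR)\ra=-R+R^2+2vS$. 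Substituting these (the two occurrences of $2vS$ cancel) should yield
\be
\Delta_{\Sigma_{\tau}}(vR)-\la\na f,\na(vR)\ra=-2v|\Ric|^2+R-v\,\Hess R(\nu,\nu)+\frac{2vRS}{\rho^2}-\frac{2vS^2}{\rho^4}-R^2 .
\ee
The two principal terms $-2v|\Ric|^2+R$ now appear for free, so the lemma is reduced to bounding the last four terms below by $-C_0v^{-3/2}$.

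Three of these terms are routine. The exact identity $\Ric(\na f)=\tfrac12\na R$ from \eqref{na R=Ric} gives $|S|\le\tfrac12|\na R|$, and Shi's estimate \eqref{Shi est} bounds $|\na R|\le Cv^{-3/2}$; hence $S=O(v^{-3/2})$ and $S^2=O(v^{-3})$. Using in addition $R=O(v^{-1})$ and $\rho\ge\tfrac1{\sqrt2}$, I get $\frac{2vRS}{\rho^2}=O(v^{-3/2})$, $\frac{2vS^2}{\rho^4}=O(v^{-2})$ and $R^2=O(v^{-2})$, each of which is $\ge-Cv^{-3/2}$.

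The one genuinely delicate term is $-v\,\Hess R(\nu,\nu)$, and this is where I expect the main obstacle to be. Since $\nu$ is, up to the factor $\rho$, the radial field $\na f$, what is needed is the \emph{radial} second-derivative bound $\Hess R(\na f,\na f)\le Cv^{-5/2}$, which then gives $-v\,\Hess R(\nu,\nu)\ge-Cv^{-3/2}$. Differentiating $\la\na R,\na f\ra=2S$ along $\na f$ and using $\na_{\na f}\na f=-\Ric(\na f)$ produces $\Hess R(\na f,\na f)=2(\na_{\na f}\Ric)(\na f,\na f)-2|\Ric(\na f)|^2$, whose last term is $O(v^{-3})$; the difficulty is thus to control the fully radial derivative $(\na_{\na f}\Ric)(\na f,\na f)$. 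The generic Shi estimate only yields $O(v^{-3/2})$ for this quantity, a full power of $v$ short of what is required, so the crux is to gain extra decay in the $\na f$-direction. I would obtain it by exploiting that $\na f$ is asymptotically a splitting direction: after rescaling $g_i=r(p_i)^{-1}g$, the normalized potential $r(p_i)^{-1/2}f$ has unit gradient while its Hessian has $g_i$-norm $r(p_i)^{1/2}|\Ric|=O(r(p_i)^{-1/2})\to 0$, so the local smooth limits constructed as in Lemma \ref{local fast conv} (which require no noncollapsing) split off a line along which the limiting curvature is translation invariant; this forces the radial derivatives of $\Rm$ to decay faster than the Shi rate. Turning this splitting into the \emph{quantitative} bound $\Hess R(\na f,\na f)\le Cv^{-5/2}$, rather than merely $o(v^{-2})$, is the essential point; granting it, all four error terms are $O(v^{-3/2})$ and the inequality follows.
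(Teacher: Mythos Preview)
Your reduction is correct and parallels the paper's: both decompositions isolate the radial Hessian $v\,\Hess R(\na f,\na f)$ (equivalently $v\,\Hess R(\nu,\nu)$) as the only non-routine term. The gap is in how you propose to control it. A compactness/splitting argument of the type you outline can only yield qualitative improvement---the local limits split, hence radial derivatives vanish \emph{in the limit}---but this translates back to the soliton as $o(v^{-3/2})$ at best, not the uniform $O(v^{-5/2})$ you need. You recognize this yourself (``Turning this splitting into the quantitative bound $\ldots$ is the essential point; granting it''), which leaves the lemma unproven.

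The paper avoids this entirely with an algebraic identity. Using the contracted Bianchi identity $R_{,i}=2R_{ki,k}$ and the soliton equation $f_{ij}=-R_{ij}$ (so that $R_{ki,k}f_i=(R_{ki}f_i)_k+|\Ric|^2=\tfrac12\Delta R+|\Ric|^2$), one computes directly
\be
\na^2 R(\na f,\na f)=\la\na\Delta R,\na f\ra+2\,\na_{\na f}|\Ric|^2+\tfrac12|\na R|^2.
\ee
Each term on the right is now controlled by Shi's estimate \eqref{Shi est} with no loss: $|\na\Delta R|\le C|\na^3\Rm|=O(v^{-5/2})$, $|\na_{\na f}|\Ric|^2|\le 2|\Ric||\na\Ric|=O(v^{-5/2})$, and $|\na R|^2=O(v^{-3})$. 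Hence $v\,\na^2 R(\na f,\na f)=O(v^{-3/2})$, exactly the bound you sought. In particular, via your own identity $\Hess R(\na f,\na f)=2(\na_{\na f}\Ric)(\na f,\na f)-2|\Ric(\na f)|^2$, this shows that the ``fully radial derivative'' $(\na_{\na f}\Ric)(\na f,\na f)$ is already $O(v^{-5/2})$ by Bianchi plus Shi---no improved decay in the $\na f$-direction needs to be extracted from a limit.
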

\begin{proof} By \eqref{conserved eqn}, $|\na f|^2=1-R=1-o(1)\geq \frac{1}{2}$ outside a compact set. Let $\{e_i\}_{i=1}^{n}$ be an orthonormal frame near infinity such that $e_n=\frac{\na f}{|\na f|}$. It is known that the Laplace operators on $M$ and $\Sigma_{\tau}$ are related by the following formula: for any smooth function $\omega$,
\be
\Delta \omega=\na^2\omega(e_n, e_n)+H_\tau  e_n \omega + \Delta_{\Sigma_{\tau}}\omega.
\ee
where $\Delta$ and $\Delta_{\Sigma_\tau}$ are the Laplacian operators on $M$ and $\Sigma_{\tau}$ respectively, $H_\tau$ is the mean curvature of $\Sigma_{\tau}$.
From the soliton equation, we see that $\la e_i,\na_{e_i} e_n\ra=-\Ric(e_i, e_i)|\na f|^{-1}$ for $1\leq i\leq n-1$ and $H_\tau=\sum_{i=1}^{n-1} \la\na_{e_i}e_n, e_i\ra$. Hence
\be\label{lap relation}
\Delta \omega= \Delta_{\Sigma_{\tau}}\omega+\frac{ \na^2 \omega (\na f, \na f)}{|\na f|^2}-\frac{R-\Ric(\frac{\na f}{|\na f|}, \frac{\na f}{|\na f|})}{|\na f|^{2}}\la \na f, \na \omega\ra.
\ee
We substitute $\omega=vR$ in \eqref{lap relation} and estimate the terms on the R.H.S. of \eqref{lap relation} one by one. Using \eqref{Shi est}, \eqref{pot est vs r} and 
\be\label{Ric in na f}
2\Ric(\na f, \na f)=\la \na R, \na f\ra=\Delta R+2|\Ric|^2=O(v^{-2}),
\ee
 we get 
\be
\begin{split}
-\frac{R-\Ric(\frac{\na f}{|\na f|}, \frac{\na f}{|\na f|})}{|\na f|^{2}}\la \na f, \na (vR)\ra&=O(v^{-1})\left(-|\na f|^2 R+v\la \na f, \na R\ra \right)\\
&=O(v^{-2}).
\end{split}
\ee 
For the Hessian term in the normal direction in \eqref{lap relation}, direct calculation and \eqref{Ric in na f} yield
\be
\begin{split}
\na^2(vR)(\na f, \na f)&=(vR)_{ij}f_if_j\\
&=vR_{ij}f_if_j+2v_iR_jf_if_j+Rv_{ij}f_if_j\\
&=v\na^2 R(\na f, \na f)-2|\na f|^2\la \na R, \na f\ra+R\Ric(\na f, \na f)\\
&=v\na^2 R(\na f, \na f)+O(v^{-2})+O(v^{-3})\\
&=v\na^2 R(\na f, \na f)+O(v^{-2}).
\end{split}
\ee
By \eqref{na R=Ric} $\na R=2\Ric(\na f)$, the second Bianchi identity and Shi's estimate \eqref{Shi est},
\be
\begin{split}
v\na^2 R(\na f, \na f)&=vR_{lkkl,ij}f_i f_j\\
&=2vR_{klli,kj}f_if_j\\
&=2v\left( R_{klli,k}f_i\right)_j f_j-2vR_{ki,k}f_{ij}f_j\\
&=2v\left[ \left(R_{klli}f_i\right)_k+|\Ric|^2\right]_j f_j+\frac{v|\na R|^2}{2}\\
&=v\la\na \Delta R, \na f\ra+2v\na_{\na f}|\Ric|^2+\frac{v|\na R|^2}{2}\\
&= O(v^{-\frac{3}{2}})+O(v^{-2})\\
&= O(v^{-\frac{3}{2}}).
\end{split}
\ee
Hence by \eqref{f lap v} and \eqref{f lap R}, there exists a constant $C_0>0$ such that
\be
\begin{split}
\Delta_{\Sigma_{\tau}}(vR)-\la \na f, \na (vR)\ra&\geq \Delta(vR)-\la \na f, \na (vR)\ra-O(v^{-\frac{3}{2}})\\
&=v\D R +R\D v+2\la \na R,\na v\ra-O(v^{-\frac{3}{2}})\\
&\geq-2v|\Ric|^2+R-C_0v^{-\frac{3}{2}}
\end{split}
\ee
near infinity.
\end{proof}

As an intermediate step toward the estimate on the curvature tensor $|\Rm|$, we use the maximum principle to establish the exponential decay of the norm of Ricci tensor $|\Ric|$.

\begin{lma}Under the same assumptions as Proposition \ref{level sets exp decay}, the Ricci curvature decays exponentially near infinity,
\be
|\Ric|\leq Ce^{-r}.
\ee
\end{lma}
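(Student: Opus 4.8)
The plan is to reduce the statement to the regime of uniform smallness of $rR$ near infinity, and then to run a maximum principle along the level set flow against an exponential barrier. First I would use the hypothesis $\liminf_{x\to\infty}rR=0$ to select a sequence $p_i\to\infty$ with $r(p_i)R(p_i)\to0$; since $|\Ric|\le A_0R$ and $|\Rm|\le A_1(r+1)^{-1}$, Lemmas \ref{local fast conv} and \ref{local fast conv2} apply and give $\sup_{\Sigma_{\tau_i}}rR\to0$, where $\tau_i:=-f(p_i)\to\infty$. By \eqref{pot est vs r} this is the same as $\sup_{\Sigma_{\tau_i}}vR\to0$, so the weighted scalar curvature $w:=vR$ is uniformly small on a sequence of level sets escaping to infinity.

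Next I would propagate this smallness to \emph{all} large level sets using Lemma \ref{evolution of vR}. Parametrising the level set flow by $v=\tau$ through the vector field $\na v/|\na v|^2$, the drift becomes $-\la\na f,\na w\ra=|\na f|^2\,\partial_\tau w$, so the inequality of Lemma \ref{evolution of vR} reads $\Delta_{\Sigma_\tau}w+|\na f|^2\partial_\tau w\ge -2v|\Ric|^2+R-C_0v^{-3/2}$; this is backward parabolic in $\tau$, hence data at the large times $\tau_i$ controls all smaller $\tau$. Using $\tfrac1n R^2\le|\Ric|^2\le A_0^2R^2$ I would bound $-2v|\Ric|^2+R\ge R(1-2A_0^2w)$, which is $\ge0$ once $w$ is small; evaluating at a spatial maximum of $w$ (where $\Delta_{\Sigma_\tau}w\le0$) gives $\partial_\tau\max_{\Sigma_\tau}w\ge -2C_0\tau^{-3/2}$, and integrating the only genuinely negative contribution $C_0v^{-3/2}$ in $\tau$ (which is integrable at infinity) yields $\sup_{\Sigma_\tau}w\le\sup_{\Sigma_{\tau_i}}w+C\tau^{-1/2}$ for $\tau\le\tau_i$. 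Letting $\tau_i\to\infty$ gives $\lim_{\tau\to\infty}\sup_{\Sigma_\tau}vR=0$, so that $rR$, and hence $r|\Ric|$, is uniformly small near infinity.

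Finally I would upgrade this polynomial smallness to exponential decay by a comparison argument. The natural barrier is $e^{-\delta v}$: from $\D v=1$ and $|\na f|^2=1-R$ one computes $\D(e^{-\delta v})=\delta(\delta-1-\delta R)e^{-\delta v}$, so for $\delta\in(0,1)$ the barrier carries a reaction with coefficient $\delta(\delta-1)$ that is \emph{bounded away from zero}, in contrast to the naive reaction $-2|\Ric|^2\sim R^2$ coming from \eqref{f lap R}. The idea is to compare $R$ (hence $|\Ric|$, which is comparable) with $Ae^{-\delta v}$ on $\{v\ge V_0\}$: the uniform smallness from the previous step makes the quadratic term $-2v|\Ric|^2$ negligible against the order-one reaction of the barrier, while the transverse term $\Delta_{\Sigma_\tau}$ is controlled through the bounded rescaled geometry of the level sets (their diameter is $\le C\sqrt\tau$ by Lemma \ref{level set diam est}), so a maximum principle should force $|\Ric|\le CAe^{-\delta v}$, which one then normalises to $|\Ric|\le Ce^{-r}$.

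I expect this last step to be the main obstacle. The difficulty is precisely that neither the scalar equation \eqref{f lap R} nor the corresponding (Lichnerowicz-type) equation for $\Ric$ has a reaction coefficient bounded away from zero — both are schematically of the form $\D(\cdot)=\Rm*(\cdot)$ with $|\Rm|\lesssim r^{-1}\to0$ — so exponential decay cannot be read off pointwise and must be extracted from the drift $\D v=1$ together with the transverse Laplacian. Making the barrier comparison rigorous therefore requires carefully tracking the backward-parabolic sign of the transport term, absorbing the nonlinear term $-2v|\Ric|^2$ using the smallness of $vR$, and ensuring the transverse spectral contribution does not overwhelm the order-one radial reaction; this is the technical heart of the proof.
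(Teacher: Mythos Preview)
Your first two steps are essentially the paper's approach, with one caveat. In Step~2 you need $w=vR$ small to discard the term $R(1-2A_0^2w)$, but a priori $vR$ is only bounded (by $\sim 2nA_1$ from the linear decay), so the direct integration is circular. The paper handles this more cleanly by a contradiction argument: fix $\ve>0$ small and $a_1$ large; if $vR>\ve$ somewhere on $\{v\ge a_1\}$, pick $z_0$ with largest $v(z_0)$ such that $v(z_0)R(z_0)=\ve$ (such a point exists since $\sup_{\Sigma_{\tau_i}}vR\to0$). At $z_0$ one has $\Delta_{\Sigma_{v(z_0)}}(vR)\le0$ and $\la\na f,\na(vR)\ra\ge0$, so Lemma~\ref{evolution of vR} gives $0\ge-2A_0^2(vR)^2+vR-C_0v^{-1/2}$; the quadratic formula then forces $\ve\ge\tfrac{3}{8A_0^2}$ or $\ve\le 2C_0a_1^{-1/2}$, both impossible. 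This yields $vR\le\ve$ on all of $\{v\ge a_1\}$, hence $\lim_{x\to\infty}rR=0$ outright.

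Your Step~3 is where you diverge from the paper, and it is precisely here that you are working harder than necessary. Once $\lim_{x\to\infty}rR=0$ uniformly, the paper does \emph{not} run a barrier comparison against $e^{-\delta v}$; it simply observes that $|\Ric|\le A_0R$ gives $\D R=-2|\Ric|^2\ge -2A_0^2R^2$ and then cites Proposition~1 and Lemma~3 of \cite{Chan-2019}, which already establish $R\le Ce^{-r}$ from exactly this differential inequality together with uniform smallness of $rR$ and properness of $f$. The bound $|\Ric|\le A_0R\le Ce^{-r}$ then follows immediately. So the step you flag as the ``technical heart'' and ``main obstacle'' is in fact a black box in the present paper; your proposed barrier argument is plausible (and may well be what underlies \cite{Chan-2019}), but it is not needed here and you have not completed it.
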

\begin{proof}For all small $\ve>0$, we can find a large $a_1>1+a_0$ such that 
\be\label{choice of a1}
\frac{8C_0A_0^2}{\sqrt{a_1}}\leq \frac{1}{2} \text{  and  } \frac{2C_0}{\sqrt{a_1}}\leq \frac{\ve}{1000}.
\ee
where $a_0$, $C_0$ and $A_0$ are the constants in Lemma \ref{evolution of vR} and Proposition \ref{level sets exp decay} respectively.\\
\textbf{Claim:} For any $y$ in the set of  $\{v\geq a_1\}$, 
\be\label{para est}
v(y)R(y)\leq \ve.
\ee

We first assume the claim and prove the lemma. By \eqref{pot est vs r} (see also \cite[Lemma 2]{Chan-2019}), $\lim_{x\to\infty}r^{-1}v=1$ and hence from \eqref{para est} $\lim_{x\to\infty}rR=0$. Thanks to the assumption $|\Ric|\leq A_0 R$,
\be
\begin{split}
\Delta_f R&=-2|\Ric|^2\\
&\geq -2A_0^2R^2.
\end{split}
\ee
We may then apply Proposition $1$ and Lemma $3$ in \cite{Chan-2019} to conclude that 
\be
\begin{split}
|\Ric|&\leq A_0R\\
&\leq Ce^{-r}.
\end{split}
\ee
Now it remains to justify the claim, i.e. \eqref{para est}. Suppose the claim does not hold. Then there exists $y_0$  in the set of $\{v\geq a_1\}$ such that $v(y_0)R(y_0)> \ve$. By $\liminf_{x\to\infty}rR=0$ and Lemma \ref{local fast conv2}, we can find sequences of $p_i\to\infty$ with $\tau_i:=v(p_i)>v(y_0)$ and $\ve_i:=\sup_{\Sigma_{\tau_i}}vR\rightarrow 0$ as $i\to\infty$. Moreover for all large $i$, $\ve_i<\ve$. Therefore, we can always choose a $z_0$ $\in \{\tau_i\geq v\geq a_1\}$ with largest possible $v(z_0)$ such that $v(z_0)R(z_0)=\ve$. By the choices of $\ve_i$ and $z_0$, $v(z_0)<\tau_i$ and $vR<\ve$ on $\{\tau_i\geq v > v(z_0)\}$. We may invoke the parabolic maximum principle to conclude that at $z_0$
\be
\la \na f, \na (vR)\ra\geq 0;
\ee
\be
\Delta_{\Sigma_{v(z_0)}}(vR)\leq 0.
\ee
By $|\Ric|\leq A_0R$ and Lemma \ref{evolution of vR},
\be
0 \geq -2A_0^2v^2R^2+vR-C_0v^{-1/2}.
\ee
From the above inequality, we have two possible cases:\\
\textbf{Case 1:} 
\be
\begin{split}
\ve&=v(z_0)R(z_0)\\
&\geq \frac{1+\sqrt{1-\frac{8C_0A_0^2}{v^{1/2}}}}{4A_0^2}\\
&\geq \frac{3}{8A_0^2},
\end{split}
\ee
which is impossible for all small $\ve$. We also used $v(z_0)\geq a_1$ and \eqref{choice of a1} in the last inequality.\\
\textbf{Case 2:} \\

\be
\begin{split}
\ve&=v(z_0)R(z_0)\\
&\leq \frac{1-\sqrt{1-\frac{8C_0A_0^2}{v^{1/2}}}}{4A_0^2}\\
&=\frac{8C_0A_0^2v^{-1/2}}{(1+\sqrt{1-8C_0A_0^2v^{-1/2}})4A_0^2}\\
&\leq 2C_0v^{-1/2}\\
&\leq 2C_0a_1^{-1/2}.
\end{split}
\ee
It follows from \eqref{choice of a1} that
\be
\ve\leq  2C_0a_1^{-1/2}\leq \frac{\ve}{1000}. 
\ee
We again arrive at a contradiction. This justifies our claim and thus completes the proof.

\end{proof}

\begin{proof}[\textbf{Proof of Proposition \ref{level sets exp decay}:}] Recall that our goal is to show $|\Rm|\leq Ce^{-r}$. Deruelle \cite[Lemma 2.8]{Deruelle-2017} proved a local derivative estimate for tensor $T$ on an expanding gradient soliton satisfying an elliptic equation of the form
\be
\D T=-\lambda T+ \Rm\ast T.
\ee
As pointed out by him in \cite{Deruelle-2017}, the same argument also works for steady gradient Ricci soliton. Hence we apply his result with $T=\Ric$ and $\lambda=0$ to get
\be\label{Hess est for Ric}
|\na^2 \Ric|\leq Ce^{-r}\leq C'e^{f}.
\ee
Let $\psi_s$ be the flow of the vector field $-\frac{\na f}{|\na f|^2}$ with $\psi_0=\text{  id}$. For all $q$ near infinity, $\psi_s(q)$ is well defined for all $s\geq 0$ and 
\be
f(\psi_s(q))=f(q)-s.
\ee
Hence $\psi_s(q)\rightarrow \infty$ as $s\to\infty$. It follows from the Ricci identity and the soliton equation (see also \eqref{Ric identity}) that
$$R_{kl,i}-R_{ki,l}=R_{ilkj}f_j.$$
By the second Bianchi identity and \eqref{Hess est for Ric}, we differentiate the quantity $|\Rm|^2$ along the flow 
\be
\begin{split}
\frac{\p}{\p s}|\Rm|^2(\psi_s(q))&=-\frac{\la\na |\Rm|^2, \na f\ra}{|\na f|^2}\\
&=-2|\na f|^{-2}R_{ijkl}R_{ijkl, \a}f_{\a}\\
&=-2|\na f|^{-2}R_{ijkl}\big[-(R_{ijl\a}f_{\a})_k+(R_{ijk\a}f_{\a})_l-R_{ijl\a}R_{\a k}+R_{ijk\a}R_{\a l}\big]\\
&\geq -Ce^{f(q)-s}|\Rm|-Ce^{f(q)-s}|\Rm|^2\\
&\geq -2Ce^{f(q)-s}-2Ce^{f(q)-s}|\Rm|^2.
\end{split}
\ee
We may integrate the above inequality and get
\be
\ln\left(\frac{1+|\Rm|^2(\psi_s(q))}{1+|\Rm|^2(q)}\right)\geq 2Ce^{f(q)-s}-2Ce^{f(q)}.
\ee
Letting $s \to\infty$, together with the fact that $\lim_{x\to\infty}|\Rm|=0$, one has 
\be\label{exp in f}
\begin{split}
|\Rm|^2(q)&\leq e^{2Ce^{f(q)}}-1\\
&\leq 2Ce^{2Ce^{f(q)}}e^{f(q)}\\
&\leq C^{'''}e^{f(q)}.
\end{split}
\ee
We also used $f\leq -10$ on $M$ in the last inequality. Thank to \eqref{pot est vs r} and \eqref{exp in f}, we see that $\lim_{x\to\infty}r|\Rm|=0$. We then apply \cite[Theorem 2]{Chan-2019} (see also \cite{MunteanuSungWang-2017}) and conclude that
\be
|\Rm|\leq Ce^{-r}.
\ee

\end{proof}

\begin{proof}[\textbf{Proof of Theorem \ref{4 dim dichotomy}:}] By our assumption $R \to 0$ as $x\to \infty$. It then follows from  \cite{Chan-2019} that
\be\label{4 dim Rm by R in proof}
|\Ric|\leq c_1 |\Rm| \leq A_0 R \text{  on } M.
\ee

\begin{itemize}
    \item If $\liminf_{x\to\infty}rR>0$, then $R \sim r^{-1}$ near infinity and estimate \eqref{4 dim linear decay rate} on $\Rm$ is a consequence of \eqref{4 dim Rm by R in proof}.
    \item If $\liminf_{x\to\infty}rR=0$, we apply Proposition \ref{level sets exp decay} to see that
\be
|\Rm|\leq Ce^{-r}.
\ee
By the lower bound for $R$ established by Chow-Lu-Yang \cite{ChowLuYang-2011} (see also \cite{MunteanuSungWang-2017}), $|\Rm|$ is bounded below by a constant multiple of $e^{-r}$.
\end{itemize}

\end{proof}
\begin{proof}[\textbf{Proof of Theorem \ref{n dim dichotomy}:}] Since $\Ric\geq 0$ outside a compact subset, there exists a large positive constant $A_0$ such that
\be
|\Ric|\leq A_0R.
\ee 
Using $|\Rm|\to 0$ as $x\to \infty$ and an estimate on the potential function $f$ in \cite[Proposition 1]{Chan-2019.2} (see also \cite{ChowDengMa-2020}), we have $\lim_{x\to\infty}f=-\infty$. Theorem \ref{n dim dichotomy} then follows similarly from Proposition \ref{level sets exp decay} as in the proof of Theorem \ref{4 dim dichotomy}.
\end{proof}

\bigskip

\section{Proof of Theorem \ref{fast decay soliton}}

We first investigate the topology of the level sets of $f$ and show that they are diffeomorphic to a quotient of torus at infinity if the curvature decays sufficiently fast.

\begin{prop}\label{level sets are e flat}
Let $(M^n,g, f)$ be a complete noncompact non Ricci flat steady gradient Ricci soliton of real dimension $n$ with proper potential function. Further suppose that the following conditions are satisfied:
\begin{itemize} 
\item $|\Ric|\leq A_0R$ on $M$ for some constant $A_0>0$;
\item $|\Rm|\leq A_1(r+1)^{-1}$ on $M$ for some constant $A_1>0$;
\item $\liminf_{r\to\infty}rR=0$.
\end{itemize}
Then for all sufficiently large $\tau$, the level sets $\Sigma_{\tau}:=\{x: -f(x)=\tau\}$ are diffeomorphic to a finite quotient of torus. 
\end{prop}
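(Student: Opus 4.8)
The plan is to transfer the exponential curvature decay supplied by Proposition~\ref{level sets exp decay} to the intrinsic geometry of the level sets and then invoke the structure theory of almost flat manifolds. First I would record that under the present hypotheses Proposition~\ref{level sets exp decay} already gives $|\Rm|\leq Ce^{-r}$, and hence, by the Shi-type estimates \eqref{Shi est} upgraded along the flow of $-\na f/|\na f|^2$ exactly as in the proof of Proposition~\ref{level sets exp decay} (Deruelle's local derivative estimate), the full covariant derivatives satisfy $|\na^k\Rm|\leq C_k e^{-r}$ near infinity. On a level set $\Sigma_\tau$ the unit normal is $\na f/|\na f|$, so by the soliton equation \eqref{eq-RS-22} the second fundamental form equals $-\Ric|_{T\Sigma_\tau}/|\na f|$; since $|\na f|^2\geq \tfrac12$ near infinity and $|\Ric|\leq A_0 R\leq Ce^{-r}$, the second fundamental form of $\Sigma_\tau$ decays exponentially. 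The Gauss equation then bounds the intrinsic curvature tensor of $\Sigma_\tau$ by $|\Rm|+C\,|\,\text{II}\,|^2\leq Ce^{-r}$, and differentiating the Gauss equation together with the estimates on $\na^k\Rm$ and $\na^k\Ric$ controls $\na^k\Rm_{\Sigma_\tau}$ as well. Using \eqref{tau eqv r}, on $\Sigma_\tau$ one has $r\geq \tau/2$, so $\sup_{\Sigma_\tau}|\Rm_{\Sigma_\tau}|\leq Ce^{-\tau/2}$.

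Next I would combine this with the diameter estimate. By Lemma~\ref{level set diam est}, $\mathrm{diam}(\Sigma_\tau)\leq C\sqrt{\tau}$. Rescaling the induced metric to $\hat g_\tau:=\tau^{-1}g|_{\Sigma_\tau}$ normalizes the diameter, $\mathrm{diam}(\Sigma_\tau,\hat g_\tau)\leq C$, while the curvature of the rescaled metric obeys $\sup_{\Sigma_\tau}|\Rm(\hat g_\tau)|=\tau\,\sup_{\Sigma_\tau}|\Rm_{\Sigma_\tau}|\leq C\tau e^{-\tau/2}\to 0$. Thus for every $\delta>0$ there is $\tau_0$ so that for $\tau\geq\tau_0$ the closed $(n-1)$-manifold $\Sigma_\tau$ carries a metric with $\mathrm{diam}^2\sup|\sec|<\delta$. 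Applying Gromov's almost flat manifold theorem (in the form of Ruh), for $\tau$ large $\Sigma_\tau$ is diffeomorphic to an infranilmanifold.

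The remaining, and genuinely hardest, step is to upgrade \emph{infranilmanifold} to \emph{finite quotient of a torus}, i.e.\ to rule out a non-abelian nilpotent structure; this is where the Schoen--Yau minimal hypersurface trick (cf.\ \cite{Deruelle-2012}) enters. The extra input is that $\Sigma_\tau$ is not merely almost flat but almost Ricci flat with definite sign control: from $|\Ric|\leq A_0R\leq Ce^{-r}$ and the Gauss--Codazzi relations, the intrinsic Ricci, and hence scalar, curvature of $\Sigma_\tau$ is exponentially small of both signs, so the intrinsic scalar curvature is bounded below by $-Ce^{-\tau/2}$. A non-flat infranilmanifold is aspherical with virtually nilpotent, non-virtually-abelian fundamental group, hence enlargeable and essential; running the Schoen--Yau descent through an area-minimizing hypersurface in a nontrivial codimension-one homology class, the stability inequality together with the almost-nonnegative intrinsic scalar curvature and the bounded rescaled geometry forces the nilpotent holonomy to be abelian, contradicting non-flatness. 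Consequently the infranilmanifold is flat, and by Bieberbach's theorem a compact flat $(n-1)$-manifold is diffeomorphic to a finite quotient of the torus $\mathbb{T}^{n-1}$, which gives the claim for all $\tau\geq\tau_0$.

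The main obstacle is precisely this final rigidity: because the intrinsic scalar curvature is only \emph{almost} nonnegative rather than genuinely nonnegative, one must run the Schoen--Yau argument quantitatively. An alternative route for this step, which I would pursue in parallel, is to pass to the pointed Gromov--Hausdorff limit of $(\Sigma_\tau,\hat g_\tau)$ and argue, via Cheeger--Fukaya--Gromov collapse theory, that the limiting flat structure is modeled on an abelian (hence toral) nilpotent fiber, thereby identifying the diffeomorphism type of $\Sigma_\tau$ with a finite quotient of $\mathbb{T}^{n-1}$.
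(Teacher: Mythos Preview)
Your first two paragraphs are fine, but the route you take forces you into a corner at the end, and the ``Schoen--Yau descent'' you sketch to escape it is not a valid argument. Almost nonnegative scalar curvature has no topological content by itself: every closed manifold carries metrics with scalar curvature bounded below by $-\varepsilon$, so a lower bound of $-Ce^{-\tau/2}$ on $R_{\Sigma_\tau}$ does not obstruct a non-abelian infranil structure. The Schoen--Yau/Gromov--Lawson theorems require genuinely nonnegative scalar curvature, and there is no quantitative version that lets ``almost nonnegative'' rule out the Heisenberg nilmanifold, say. Your alternative via Cheeger--Fukaya--Gromov is equally inconclusive: collapsed almost-flat sequences can and do limit onto non-abelian nilpotent models.

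The point you are missing is that the level sets do \emph{not} collapse, and once you see this the whole detour through Gromov's almost-flat theorem becomes unnecessary. The exponential Ricci decay from Proposition~\ref{level sets exp decay} feeds directly into the evolution of the pull-back metrics along the flow $\psi_s$ of $-\na f/|\na f|^2$: restricted to $T\Sigma_{\tau_0}$,
\[
\frac{\partial}{\partial s}\psi_s^*g \;=\; \psi_s^*\!\left(\frac{2\Ric}{|\na f|^2}\right)\;=\;O\!\left(e^{-\tau_0-s}\right)\psi_s^*g,
\]
which integrates to show that all the $\psi_s^*g$ are uniformly bi-Lipschitz equivalent. Hence $\mathrm{diam}(\Sigma_\tau)$ is bounded \emph{uniformly} in $\tau$ (much better than $C\sqrt{\tau}$), and $\mathrm{Vol}(\Sigma_\tau)$ is bounded uniformly from below. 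With $|\Rm_{\Sigma_\tau}|\to 0$, uniform derivative bounds, bounded diameter and a volume lower bound, Hamilton's compactness theorem gives \emph{smooth} subsequential convergence of $(\Sigma_\tau, g|_{\Sigma_\tau})$ to a compact flat manifold; Bieberbach then identifies the diffeomorphism type as a finite quotient of $\mathbb{T}^{n-1}$. This is exactly the argument of Deruelle~\cite{Deruelle-2012}, and it is what the paper does.
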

\begin{proof}
By Proposition \ref{level sets exp decay} and a volume estimate by Munteanu-Sesum \cite{MunteanuSesum-2013}, we see that $|\Rm|$ is integrable, i.e. in $L^1(M)$. Thanks to the properness of $f$ and a result of Munteanu-Wang \cite{MunteanuWang-2011}, $M$ is connected at infinity and hence the level sets $\{-f=\tau\}$ are connected smooth compact hypersurfaces in $M$ for all large $\tau$. Moreover, there exists a large $\tau_0$ such that $\na f\neq 0$ on $\{-f\geq \tau_0\}$ and  $\Sigma_{\tau}=\{-f=\tau\}$ are diffeomorphic to each other for all $\tau\geq \tau_0$. The same argument used by Deruelle \cite[Proposition 2.3]{Deruelle-2012} shows that the level sets are diffeomorphic to a compact flat manifold. Indeed. if $\psi_s$ is the flow of $-\frac{\na f}{|\na f|^2}$ with $\psi_0=\text{  id}$. Then 
$\psi_s: \Sigma_{\tau_0}\longrightarrow \Sigma_{\tau_0+s}$ are diffeomorphisms for all $s\geq 0$ and when restricted on the tangent space of the level set $T\Sigma_{\tau_0}$, the pull back metric satisfies
\be\label{evolution of g in exp}
\begin{split}
\frac{\p}{\p s}\psi_s^*g&=-\psi_s^*\mathcal{L}_{\frac{\na f}{|\na f|^2}}g\\
&=\psi_s^*\left(\frac{2\Ric}{|\na f|^2}\right)\\
&= O\left(e^{-\tau_0-s}\right)\psi_s^*g.
\end{split}
\ee
Proposition \ref{level sets exp decay} was used in the last equation. Hence $\psi_s^*g$ are uniformly equivalent to each other for all $s\geq 0$ and we can find a constant $C>0$ such that for all $\tau\geq \tau_0$
\be\label{diam relation}
\begin{split}
\text{Diam}(\Sigma_{\tau})&\leq C \text{  Diam}(\Sigma_{\tau_0});\\
\text{Vol}(\Sigma_{\tau_0})&\leq C \text{  Vol}(\Sigma_{\tau}).
\end{split}
\ee
Similar argument by Deruelle \cite[Lemma 2.5]{Deruelle-2012} shows that for all nonnegative integer $k$, there exists $C_k$ independent of $\tau$ such that
\be
|\na_{\Sigma_{\tau}}^k\Rm_{\Sigma_{\tau}}|\leq C_k,
\ee
where $\Rm_{\Sigma_{\tau}}$ and $\na_{\Sigma_{\tau}}$ denote the curvature tensor and the Riemannian connection of $\Sigma_{\tau}$ with respect to the induced metric by $g$. It follows from $\lim_{\tau \to \infty}\sup_{\Sigma_{\tau}} |\Rm_{ \Sigma_{\tau}}|=0$ and the Hamilton Compactness Theorem \cite{Hamilton-1995.2} that for any sequence $\tau_i\to\infty$, by passing to subsequence if necessary, $\Sigma_{\tau_i}$ converges smoothly to a compact flat manifold as $i \to \infty$. Hence, for large $\tau$, $\Sigma_{\tau}$ is diffeomorphic to a finite quotient of the torus $\mathbb{T}^{n-1}$ by the Bieberbach's Theorem (see \cite{Deruelle-2012} and reference therein).
\end{proof}

With the topological restriction on the level sets in Proposition \ref{level sets are e flat}, we will prove that the level sets $\Sigma_{\tau}$ are flat with respect to the induced metric for all large $\tau$. The key observation is that torus does not admit any nontrivial metric with nonnegative scalar curvature. This approach was used by Deruelle \cite{Deruelle-2012} to study steady soliton with integrable curvature.

\begin{lma}\label{flat level sets}Under the assumptions in Theorem \ref{fast decay soliton}, for all sufficiently large $\tau$, the level sets $\Sigma_{\tau}:=\{x: -f(x)=\tau\}$ with induced metrics $g|_{\Sigma_{\tau}}$ from $M$ are flat.
\end{lma}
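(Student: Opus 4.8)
The plan is to compare the intrinsic scalar curvature of the level set $\Sigma_\tau=\{-f=\tau\}$ with the ambient curvature through the Gauss equation, show it is nonnegative, and then apply the scalar–curvature rigidity of tori. First I would record the second fundamental form. Writing $e_n:=\na f/|\na f|$ for the unit normal, the soliton equation $\na^2 f=-\Ric$ gives, for an orthonormal tangent frame $\{e_i\}_{i=1}^{n-1}$ of $\Sigma_\tau$, the identity $A_{ij}=\na^2 f(e_i,e_j)/|\na f|=-\Ric(e_i,e_j)/|\na f|$, so that $H_\tau=-|\na f|^{-1}\mathrm{tr}_{\Sigma_\tau}\Ric$ and $|A|^2=|\na f|^{-2}\sum_{i,j}\Ric(e_i,e_j)^2$. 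Tracing the Gauss equation and using $R=\Ric(e_n,e_n)+\mathrm{tr}_{\Sigma_\tau}\Ric$ then yields
\[
R_{\Sigma_\tau}=R-2\Ric(e_n,e_n)+H_\tau^2-|A|^2=\big(\mathrm{tr}_{\Sigma_\tau}\Ric-\Ric(e_n,e_n)\big)+\big(H_\tau^2-|A|^2\big).
\]

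The heart of the argument is to show both parenthesized terms are nonnegative once $\tau$ is large enough that $\Ric\ge 0$ on $\Sigma_\tau$ (recall $\Ric\ge 0$ outside a compact set and that $f$ is proper, so the level sets are the compact hypersurfaces of Proposition \ref{level sets are e flat}). For the second term, the tangential restriction of $\Ric$ is a positive semidefinite symmetric form with eigenvalues $\beta_1,\dots,\beta_{n-1}\ge 0$, whence $|\na f|^2\big(H_\tau^2-|A|^2\big)=(\mathrm{tr}_{\Sigma_\tau}\Ric)^2-\sum_{i,j}\Ric(e_i,e_j)^2=2\sum_{i<j}\beta_i\beta_j\ge 0$. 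The first term is where the K\"ahler hypothesis of Theorem \ref{fast decay soliton} is essential: since $J$ is skew–adjoint one has $\langle J\na f,\na f\rangle=0$, so $Je_n$ is a unit tangent vector of $\Sigma_\tau$, and $J$–invariance of the Ricci tensor gives $\Ric(Je_n,Je_n)=\Ric(e_n,e_n)$. Completing $Je_n$ to an orthonormal tangent frame and using $\Ric\ge 0$, I obtain $\mathrm{tr}_{\Sigma_\tau}\Ric\ge \Ric(Je_n,Je_n)=\Ric(e_n,e_n)$, so the first term is also nonnegative. Hence $R_{\Sigma_\tau}\ge 0$ on $\Sigma_\tau$ for all sufficiently large $\tau$.

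Finally I would invoke the rigidity. By Proposition \ref{level sets are e flat}, $\Sigma_\tau$ is diffeomorphic to a finite quotient of $\mathbb{T}^{n-1}$; such a manifold is enlargeable (a finite cover of the enlargeable torus is enlargeable) and therefore admits no metric of positive scalar curvature, while by the Gromov--Lawson / Schoen--Yau rigidity any metric on it with nonnegative scalar curvature must be flat. Since we have just shown $R_{\Sigma_\tau}\ge 0$, the induced metric $g|_{\Sigma_\tau}$ is flat for all large $\tau$, which is the assertion.

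The hard part will be the nonnegativity of $\mathrm{tr}_{\Sigma_\tau}\Ric-\Ric(e_n,e_n)$. Under the exponential decay established in Proposition \ref{level sets exp decay} this difference is a priori of strictly lower order than either of its two terms and carries no definite sign for a general (even nonnegatively curved) soliton, the leading contributions cancelling; it is precisely the K\"ahler identity $\Ric(Je_n,Je_n)=\Ric(e_n,e_n)$, combined with the tangency of $Je_n$ to the level set, that forces it to be nonnegative. Any attempt to run the argument without the complex structure would have to control this term by entirely different means.
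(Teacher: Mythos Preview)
Your proposal is correct and follows essentially the same approach as the paper: both verify the hypotheses of Proposition~\ref{level sets are e flat} to conclude $\Sigma_\tau$ is a torus quotient, split $R_{\Sigma_\tau}$ via the Gauss equation into $\big(R-2\Ric(e_n,e_n)\big)+\big(H_\tau^2-|A|^2\big)$, use $\Ric\ge 0$ near infinity for the second term and the K\"ahler identity $\Ric(Je_n,Je_n)=\Ric(e_n,e_n)$ with $Je_n\in T\Sigma_\tau$ for the first, and then invoke the Schoen--Yau/Gromov--Lawson scalar curvature rigidity of tori. Your identification of the K\"ahler step as the crux is exactly right; the only point you leave implicit that the paper spells out is that, when $m=2$, the linear decay of $|\Rm|$ required for Proposition~\ref{level sets are e flat} comes from the four-dimensional estimate $|\Rm|\le cR$ of \cite{Chan-2019}, and the properness of $f$ from \cite{Chan-2019.2}.
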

\begin{proof} By Remark \ref{Rm by R in n dim rmk}, $|\Rm|\leq cR$ for some positive constant $c>0$ (when $m=2$, the estimate follows from \cite{Chan-2019}, where $m$ is the complex dimension). It can be seen from \cite[Proposition 1]{Chan-2019.2} (see also \cite{ChowDengMa-2020}) that $f$ is proper. We may then invoke Proposition \ref{level sets are e flat} to see that the level sets $\Sigma_{\tau}:=\{x: -f(x)=\tau\}$ are diffeomorphic to a quotient of torus.

We are going to show that they have nonnegative scalar curvature. Using the properness of $f$, we may choose larger $\tau$ such that $\Ric\geq 0$ on $\{x: -f(x)\geq \tau\}$. The second fundamental form of $\Sigma_{\tau}$ (w.r.t. the normal $-\frac{\na f}{|\na f|}$) is given by $-\frac{\na^2 f}{|\na f|}=\frac{\Ric}{|\na f|}\geq 0$. Let $0\leq \mu_1\leq \mu_2\leq \cdots \leq \mu_{2m-1}$ be the eigenvalues of the second fundamental form of $\Sigma_{\tau}$. Then we must have 
\be\label{H and A}
\begin{split}
H_{\Sigma_{\tau}}^2-|A_{\Sigma_{\tau}}|^2&=\left(\sum_{k=1}^{2m-1}\mu_k\right)^2-\sum_{k=1}^{2m-1}\mu_k^2\\
&= \sum_{1\leq k<l}^{2m-1}2\mu_k\mu_l\\
&\geq 0,
\end{split}
\ee
where $H_{\Sigma_{\tau}}$ and $A_{\Sigma_{\tau}}$ refer to the mean curvature and the second fundamental form of $\Sigma_{\tau}$ respectively. By the Gauss equation,  the intrinsic scalar curvature of  $\Sigma_{\tau}$, denoted by $R_{\Sigma_{\tau}}$ satisfies 
\be\label{intrinsic scalar lower bdd}
\begin{split}
R_{\Sigma_{\tau}}&= R-\frac{2\Ric(\na f, \na f)}{|\na f|^2}+H_{\Sigma_{\tau}}^2-|A_{\Sigma_{\tau}}|^2\\
                            &\geq R-\frac{2\Ric(\na f, \na f)}{|\na f|^2}\\
                            &= R-\frac{\Ric(\na f, \na f)}{|\na f|^2}-\frac{\Ric(J\na f, J\na f)}{|\na f|^2}\\
&\geq 0,
\end{split}
\ee
we also used the K\"{a}hlerity of ambient metric $g$ in the last equality.
Thus, the metric on $\Sigma_{\tau}$ is of nonnegative scalar curvature and hence is flat since there exists no nonflat metric on torus with nonnegative scalar curvature (See \cite{GL-spin, SY}).
\end{proof}

Now,
we are in a position to prove Theorem \ref{fast decay soliton}.

\begin{proof}[\textbf{Proof of Theorem \ref{fast decay soliton}:}]
By Lemma \ref{flat level sets}, we have $\Sigma_{\tau}$ are flat for all large $\tau$. Hence equalities hold in \eqref{intrinsic scalar lower bdd} and we have the following identities outside a compact subset of $M$
\begin{eqnarray}
\label{non 0 eigenvector}R|\na f|^2&=&2\Ric(\na f, \na f)\\
\label{Ric kernel}R^2&=&2|\Ric|^2
\end{eqnarray}
Using a result of \cite{Kotschwar-2013} (see also \cite{DeTurckKazdan-1981,Ivey-1996}): Ricci solitons are real-analytic. Hence, the soliton metric $g_{ij}$ is real analytic in its geodesic normal coordinates. Moreover $f$ satisfies
\be
\Delta f=-R 
\ee
and by the elliptic regularity theory (see \cite[p.110]{GilbargTrudinger-2001} and ref. therein), $f$ is also real analytic in the geodesic normal coordinates. 

Thanks to the analytic continuation, we see that \eqref{non 0 eigenvector} and \eqref{Ric kernel} indeed hold globally on $M$. Since the set of critical points $\{|\na f|=0\}$ is nowhere dense in $M$, it follows from \eqref{non 0 eigenvector} and \eqref{Ric kernel} that $\Ric\geq 0$ on $M$ and the kernel of $\Ric$ is a smooth subbundle of the tangent bundle of $M$ with real rank $2m-2$. By the K\"{a}hlerity of $g$, $\Ric$ only has two distinct eigenvalues, namely $0$ with real multiplicity $2m-2$ and $\frac{R}{2}$ with real multiplicity $2$. Moreover, $\na f$ and $J\na f$ span the eigenspace of the eigenvalue $\frac{R}{2}$ wherever $\na f \neq 0$. 

To prove the splitting of $M$, we proceed as in \cite{Chan-2019.2} to show that the kernel of $\Ric$ is invariant under parallel translation. Let $E$ be the kernel of $\Ric$, it is a smooth subbundle of tangent bundle of real rank $2m-2$ (see also \cite{MunteanuSungWang-2017}). Suppose at $p$, $\na f\neq 0$, by the orthogonal decomposition, the tangent space at $p$ can be splitted into $T_pM$ $=$ $E_p$ $\oplus_{\perp}$ $\text{span}\{\na f, J\na f\}$. Let $X$ be a smooth section of $E$ defined locally near $p$ and $Y$ be any smooth vector field defined near $p$, then $JX$ is also a smooth section of $E$. At $p$
\begin{eqnarray*}
\la\na_Y X,\na f\ra &=& Y\la X, \na f\ra- \la X, \na_Y\na f\ra\\
&=& \Ric(X, Y)\\
&=& 0.
\end{eqnarray*}
Similarly, $\na_Y JX \perp \na f$, thus $\na_Y X(p)$ is in $E_p$. If $\na f=0$ at $p$, by the real analyticity of $g$ (see \cite{DeTurckKazdan-1981,Ivey-1996, Kotschwar-2013}), $\{\na f=0\}=\{R=1\}$ has no interior point in $M$, we may find a sequence $p_k \to p$ as $k\to\infty$ with $\na f(p_k)\neq 0$,
$$\Ric(\na_Y X)(p)=\lim_{k\to \infty}\Ric(\na_Y X)(p_k)=0.$$
Hence, we conclude that $E$ is invariant under parallel translation. 

By the De Rham Splitting Theorem \cite{KobayashiNomizu-1963, KobayashiNomizu-1969}, the universal covering space of $M$ splits isometrically and holomorphically as $M_1\times M_2$, where $M_1$, $M_2$ are complex $m-1$ and $1$ dimensional K\"{a}hler manifolds respectively. Moreover, the tangent bundle of $M_1$ is given by the kernel of $\Ric$ and the tangent bundle of $M_2$ coincides with the nonzero eigenspace of $\Ric$. From this we conclude that $M_1$ is Ricci flat and $M_2$ is nonflat. $M$ also induces a steady K\"{a}hler gradient soliton structure on $M_2$ and by the classification of complex $1$ dimensional nontrivial complete steady K\"{a}hler gradient Ricci soliton \cite{ChowLuNi-2006}, $M_2$ is holomorphically isometric to the Cigar soliton $\Sigma$. When $m=2$, $M_1$ is flat. If $m\geq 3$, since $|\Rm|\leq cR$ (see \eqref{Rm by R in n dim}) also holds on $M_1\times \Sigma$ and $M_1$ is Ricci flat, for any $(a,b)$ $\in  M_1\times \Sigma$,
\begin{eqnarray*}
|\Rm_{M_1}|(a)&\leq& |\Rm_{M_1\times \Sigma}|(a, b)\\
&\leq& cR_{\Sigma}(b) \to 0 \text{  as } b\to \infty,
\end{eqnarray*}
we used the fact that the curvature of $\Sigma$ decays at infinity. Hence $M_1$ is also flat for $m\geq 3$. The exponential curvature decay is a consequence of the conditions $\liminf_{x\to\infty}rR=0$, Theorems \ref{4 dim dichotomy} and \ref{n dim dichotomy}. This completes the proof of the theorem.


\end{proof}

\section{Proof of Theorem \ref{fast decay Rm soliton}}

We begin with an elementary lemma on the regularity of the component functions of a parallel vector field on real analytic manifolds.
\begin{lma}\label{analytic components}
Let $(M^n,g)$ be a complete Riemannian manifold such that in any geodesic normal coordinates, the corresponding metric coefficients $g_{ij}$ are real analytic functions. Suppose that $\gamma: [a, b] \longrightarrow M$ is a geodesic and $V(t)$ is a parallel vector field along $\gamma(t)$. For any $t_0$ $\in (a, b)$ and $\{x_i\}$ a geodesic normal coordinate centered at $\gamma(t_0)$, the component functions $V^i(t)$ of $V$ are real analytic functions in $t$.
\end{lma}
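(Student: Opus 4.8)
The plan is to set up the local ODE that governs parallel transport in coordinates and observe that real analyticity of the coefficients propagates to the solution. First I would fix the geodesic normal coordinate system $\{x_i\}$ centered at $\gamma(t_0)$ and work on a subinterval $(t_0-\delta, t_0+\delta)$ small enough that $\gamma$ stays inside the coordinate chart. In these coordinates write $\gamma(t) = (\gamma^1(t), \dots, \gamma^n(t))$ and $V(t) = \sum_i V^i(t)\,\partial_{x_i}$. The parallel transport equation $\nabla_{\dot\gamma} V = 0$ becomes the linear first-order system
\be\label{parallel-ode}
\frac{dV^i}{dt} = -\sum_{j,k} \Gamma^i_{jk}(\gamma(t))\,\dot\gamma^j(t)\,V^k(t),
\ee
where $\Gamma^i_{jk}$ are the Christoffel symbols. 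Since the $g_{ij}$ are real analytic in $\{x_i\}$ by hypothesis, and the $\Gamma^i_{jk}$ are built from the $g_{ij}$ and their first derivatives via the inverse metric (rational in analytic functions, with nonvanishing determinant), the $\Gamma^i_{jk}$ are themselves real analytic functions of the coordinates.

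Next I would address the analyticity of the coefficient functions appearing in \eqref{parallel-ode}. The geodesic $\gamma(t)$ solves the geodesic equation, which is an analytic ODE system (its right-hand side is analytic in the position and velocity because the $\Gamma^i_{jk}$ are analytic). By the Cauchy--Kovalevskaya type theorem for analytic ODEs, the solution of an analytic ODE with analytic initial data is itself real analytic in $t$; hence each $\gamma^j(t)$ and each $\dot\gamma^j(t)$ is real analytic in $t$ on the subinterval. Composing the analytic functions $\Gamma^i_{jk}$ with the analytic curve $t \mapsto \gamma(t)$ yields real analytic functions $t \mapsto \Gamma^i_{jk}(\gamma(t))$, and multiplying by the analytic $\dot\gamma^j(t)$ keeps analyticity. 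Thus the linear system \eqref{parallel-ode} has real analytic coefficients in $t$.

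Finally I would apply the analyticity theorem for linear ODEs once more, now to \eqref{parallel-ode} itself: a linear first-order system $\dot V = A(t) V$ with real analytic matrix $A(t)$ has every solution real analytic in $t$ wherever $A$ is defined and analytic. Therefore the component functions $V^i(t)$ are real analytic on $(t_0-\delta, t_0+\delta)$, and in particular at $t_0$. The main obstacle, or rather the point requiring care, is the passage from analyticity \emph{in the spatial coordinates} to analyticity \emph{in the curve parameter} $t$: one must verify that the geodesic itself has analytic component functions, which is why I invoke the analytic dependence of solutions of analytic ODEs for the geodesic equation before handling the parallel transport equation. Everything else is the standard fact that analyticity is preserved under the field operations used to form the Christoffel symbols, under composition with analytic maps, and under solving linear ODEs with analytic coefficients.
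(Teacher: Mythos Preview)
Your argument is correct and follows the same overall strategy as the paper---write the parallel transport equation in coordinates and apply the analytic ODE theorem---but you take an unnecessary detour. The hypothesis places the geodesic normal coordinates \emph{at} $\gamma(t_0)$, and radial geodesics through the center of a normal coordinate chart are straight lines: in these coordinates $\gamma(t)=(t-t_0)a$ for some constant vector $a\in\R^n$. Hence $\dot\gamma^j\equiv a^j$ and $t\mapsto\Gamma^i_{jk}(\gamma(t))=\Gamma^i_{jk}((t-t_0)a)$ is real analytic by direct substitution, with no need to solve the geodesic equation or invoke Cauchy--Kovalevskaya for it. The paper exploits this, reducing the proof to a single application of the analytic ODE theorem for the linear system
\[
\frac{dV^i}{dt}+a^jV^k(t)\,\Gamma^i_{jk}((t-t_0)a)=0.
\]
Your more general route (first proving analyticity of $\gamma^j(t)$ via the geodesic ODE) would also cover coordinates not centered on the curve, which is more than is needed here; the paper's shortcut is what the choice of center buys.
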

\begin{proof} 
Let $t_0$ and $\{x_i\}$ be the number and geodesic coordinate as in the statement of the lemma. In the local coordinate, $\gamma(t)=(t-t_0)a$ for some $a$ $\in \R^n$, $V(t)=V^i (t) \frac{\p}{\p x_i}( \gamma(t) )$ and $\frac{D V}{dt}=0$ can be rewritten as
\be
\frac{d V^i(t)}{dt}+a^jV^k(t)\Gamma^i_{jk}((t-t_0)a)=0,
\ee
where $\Gamma^i_{jk}$ are the Christoffel symbols. Since $g_{ij}(x)$ are real analytic, $\Gamma^i_{jk}(x)$ are also real analytic in $x$ and hence $ \Gamma^i_{jk}((t-t_0)a)$ are real analytic in $t$. By \cite [Theorem 1.3]{Chicone-2006}, $V^i(t)$, as the solution to the above ODE, is real analytic on $(t_0-\delta, t_0+\delta)$ for some $\delta>0$. 
\end{proof}
With the above preparations, we shall prove Theorem \ref{fast decay Rm soliton}.
\begin{proof}[\textbf{Proof of Theorem \ref{fast decay Rm soliton}:}] Since the sectional curvature of $M$ is nonnegative near infinity and $R\to 0$ as $x\to \infty$, $f$ is proper (\cite{Chan-2019.2, ChowDengMa-2020}) and $|\Ric|\leq A_0R$ on $M$ for some positive constant $A_0$. Using Proposition \ref{level sets are e flat}, the level sets $\Sigma_{\tau}=\{-f=\tau\}$ are diffeomorphic to a quotient of torus $\mathbb{T}^{n-1}$ for all sufficiently large $\tau$. If $M$ has nonnegative sectional curvature at $p$, then for all unit tangent vector $v$ $\in T_pM$, 
\be\label{Ric bdd above by R in all directions}
R\geq 2\Ric(v,v).
\ee
Therefore, we can apply the Gauss equation as in \eqref{intrinsic scalar lower bdd} to see that the intrinsic scalar curvature $R_{\Sigma_{\tau}}$ is nonnegative for all large $\tau$. Since it is well-known that on $\mathbb{T}^{n-1}, n \geq 2$, any metric with nonnegative scalar curvature is flat (See \cite{GL-spin, SY}). Hence, the induced metric on $\Sigma_{\tau}$ is flat and the equality in \eqref{H and A} holds and
\be\label{R=Rnn}
R= \frac{ 2\Ric(\na f, \na f)}{ |\na f|^2}.
\ee
It is thanks to the above equation and \eqref{Ric bdd above by R in all directions} that $\na f$ is an eigenvector of $\Ric$ and $\Ric(\na f, v)=0$ for all $v$ $\in T\Sigma_{\tau}$. Hence
\be\label{R2=2Ric2}
\begin{split}
2|\Ric|^2&=2R_{aa}^2+ 2|\na f|^2 |A_{\Sigma_{\tau}}|^2\\
&=2R_{aa}^2+ 2|\na f|^2 H_{\Sigma_{\tau}}^2\\
&=2R_{aa}^2+2(R-R_{aa})^2\\
&= R^2,
\end{split}
\ee
where $a=\frac{\na f}{|\na f|}$. We also used \eqref{R=Rnn} in the last equality. It can be seen from \eqref{R2=2Ric2} and an argument by Munteanu-Sung-Wang \cite[Proposition 5.4]{MunteanuSungWang-2017} that wherever $M$ has nonnegative sectional curvature, $\Ric$ has two distinct eigenvalues, $0$ with multiplicity $n-2$ and $R/2$ with multiplicity $2$.

Let $K$ be a compact set such that $M$ has nonnegative sectional curvature and $\na f\neq 0$ on $M\setminus K$. By the strong maximum principle \cite[Theorem 12.50]{Chowetal-2008}, the kernel of the Ricci tensor $\Ric$ is invariant under parallel translation on $M\setminus K$. Due to the De Rham Splitting Theorem \cite[Theorem 10.3.1]{Petersen-2016}, for all $p$ $\in M\setminus K$, there are open neighborhood $U$ of $p$ in $M\setminus K$, manifolds $(U_1^{n-2}, g_1)$  and $(U_2^{2}, g_2)$ such that the following isometric splitting is true 
\be\label{isom split}
\begin{split}
(U, g|_U) &\cong (U_1\times U_2, g_1+g_2);\\
TU_1&=\text{  null}(\Ric);\\
TU_2&=\text{  null}(\Ric)^{\perp},\\
\end{split}
\ee
where $\text{null}(\Ric)$ denotes the nullspace of the Ricci curvature. It can be seen from the splitting that $(U_1^{n-2}, g_1)$ is flat. 

We are going to show that $M$ has nonnegative sectional curvature everywhere. Once it is established, Theorem \ref{fast decay Rm soliton} will be a consequence of the result by Deng-Zhu \cite{DengZhu-2018} (see also \cite{MunteanuSungWang-2017}). Fix $p$ $\in M\setminus K$ and open set $U$ which splits isometrically as in \eqref{isom split}, for any $q$ $\in K$, let $\gamma: [0, d]\longrightarrow M$ be a normalized geodesic joining $p$ to $q$, i.e. $\gamma(0)=p$ and $\gamma(d)=q$. Let $\{e_i\}_{i=1}^{n-2}$ and $\{\mu_i\}_{i=1}^2$ be orthonormal bases for $\text{null}(\Ric)$ and $\text{null}(\Ric)^{\perp}$ at $p$ respectively.  Their parallel translations along $\gamma$ are denoted by $\{e_i(t)\}_{i=1}^{n-2}$ and $\{\mu_i(t)\}_{i=1}^2$. For any parallel vector fields $A(t)$ and $B(t)$ along $\gamma$, we consider the following identities
\be\label{id 1}
R(e_i(t), e_j(t), e_k(t), e_l(t))(\gamma(t))=0 \text{  for } i, j, k, l=1,\cdots, n-2;
\ee
\be\label{id 2}
R(e_i(t), \mu_j(t), A(t), B(t))(\gamma(t))=0 \text{  for } i=1,\cdots, n-2, j=1, 2;
\ee
\be\label{id 2.1}
R(e_i(t), A(t), \mu_j(t), B(t))(\gamma(t))=0 \text{  for } i=1,\cdots, n-2, j=1, 2;
\ee
\be\label{id 3}
2R(\mu_1(t), \mu_2(t), \mu_2(t), \mu_1(t))(\gamma(t))-R(\gamma(t))=0;
\ee
\be
L:=\sup\{s\in [0, d] : \eqref{id 1},  \eqref{id 2}, \eqref{id 2.1} \text{  and }  \eqref{id 3} \text{  hold for all } t\,\in[0, s]\}.
\ee
By the isometric splitting of $U$ in \eqref{isom split} and the invariance of $\text{null}(\Ric)$ and $\text{null}(\Ric)^{\perp}$ under parallel translation on $U$, we see that $L>0$. 

We claim that $L=d$. Suppose on the contrary, it follows from a result of Kotschwar \cite{Kotschwar-2013} that the metric coefficients $g_{ij}$ of a Ricci soliton are real analytic in the geodesic normal coordinates (see also \cite{DeTurckKazdan-1981,Ivey-1996}). By Lemma \ref{analytic components}, the component functions of $\{e_i(t)\}_{i=1}^{n-2}$, $\{\mu_i(t)\}_{i=1}^2$, $A(t)$ and $B(t)$ are real analytic in $t$ near $L$ in the geodesic normal coordinate of $(M, g)$ centered at $\gamma(L)$.
Hence, the L.H.S. of \eqref{id 1}, \eqref{id 2}, \eqref{id 2.1} and \eqref{id 3} are real analytic functions in $t$ on $(L-\delta, L+\delta)$ and vanish on $(L-\delta, L]$ for some $\delta>0$. Thanks to the analytic continuation, the L.H.S. of \eqref{id 1}, \eqref{id 2}, \eqref{id 2.1} and \eqref{id 3} are identically zero on $(L-\delta, L+\delta)$ and thus $L\geq L+\delta$, which is absurd. This justifies our claim. 

Finally. For any $A$, $B$ $\in T_qM$, we may write
\be
\begin{split}
A&=\sum_{i=1}^{n-2}A'_ie_i(d)+\sum_{\a=1}^{2}A''_{\a}\mu_{\a}(d);\\
B&=\sum_{i=1}^{n-2}B'_ie_i(d)+\sum_{\a=1}^{2}B''_{\a}\mu_{\a}(d),
\end{split}
\ee
where $A'_i$, $A''_{\a}$, $B'_i$ and $B''_{\a}$ are some constants independent on $t$. Hence by  \eqref{id 1}, \eqref{id 2}, \eqref{id 2.1}, \eqref{id 3} and $L=d$,
\begin{eqnarray*}
R(A, B, B, A)(q)
&=&A'_iB'_jB'_kA'_lR(e_i(d), e_j(d), e_k(d), e_l(d))\\
&&+A''_{\a}B''_{\beta}B''_{\gamma}A''_{\delta}R(\mu_{\a}(d),\mu_{\b}(d), \mu_{\gamma}(d), \mu_{\delta}(d))\\
&=&\frac{1}{2}\left(A''_{2}B''_{1}-A''_{1}B''_{2}\right)^2 R(\gamma(d))\\
&\geq&0.
\end{eqnarray*}
Thus $M$ has nonnegative sectional curvature everywhere. Result then follows from the strong maximum principle argument as in \cite{DengZhu-2018, MunteanuSungWang-2017}.

\end{proof}

\section{Gromov-Hausdorff Limit at infinity}
In this section, we prove the following proposition which implies Corollary \ref{GH limit}. Moreover, $\Ric\geq 0$ near infinity is not needed in the proposition.
\begin{prop}\label{GH limit2} Let $(M^n, g, f)$ be a complete non Ricci flat steady gradient Ricci soliton with dimension $n\geq 4$ and proper potential function $f$.
\begin{enumerate}
   \item If \eqref{4 dim linear decay rate} or \eqref{lin decay in n dim} holds, then for any $p_i\to \infty$ in $M$, after passing to a subsequence, $(M, d_{R(p_i)g}, p_i)$ converges in pointed Gromov-Hausdorff sense to a cylinder $(\R\times Y, \sqrt{d_e^2+d_Y^2}, p_{\infty})$, where $d_e$ is the flat metric on $\R$,  $(Y,d_Y)$ denotes a compact Alexandrov space and $\sqrt{d_e^2+d_Y^2}$ indicates the product metric.
   \item If instead \eqref{4 dim exp decay rate} or \eqref{exp in n dim} is true, then for any $p_i\to \infty$ in $M$, $(M, d_{R(p_i)g}, p_i)$ converges in pointed Gromov-Hausdorff sense (without passing to subsequence) to the ray $([0,\infty), d_e, 0)$,  where $d_e$ is the flat metric restricted on $[0, \infty)$. In this case, $(M,g)$ is smoothly asymptotic to the cylinder $\R\times\left(\mathbb{T}^{n-1}\big/\sim\right)$ with flat product metric at exponential rate, where $\mathbb{T}^{n-1}\big/\sim$ is diffeomorphic to the quotient of torus in Corollary \ref{level set dich}$(a)$.
   \end{enumerate}
\end{prop}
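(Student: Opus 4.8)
The plan is to handle the two cases by the same basic mechanism: the rescaled metric $R(p_i)g$ has curvature that is uniformly bounded on large balls around $p_i$, so one can extract Gromov-Hausdorff limits, and the level-set foliation of $f$ provides the splitting direction. First I would set $\lambda_i := R(p_i)$ and $\tilde g_i := \lambda_i g$, and record how the soliton data rescales: the conserved equation \eqref{conserved eqn} gives $|\na f|^2 = 1 - R$, so near infinity $|\na f|\to 1$, and along the gradient flow of $f$ the function $v=-f$ behaves like an arclength parameter transverse to the level sets $\Sigma_\tau$. The key geometric input is that, in case (1), the curvature decay \eqref{4 dim linear decay rate} or \eqref{lin decay in n dim} forces $|\Rm| \sim r^{-1} \sim R$, so after rescaling by $\lambda_i = R(p_i) \sim r(p_i)^{-1}$ the curvature $|\Rm(\tilde g_i)|$ is uniformly bounded on any fixed $\tilde g_i$-ball around $p_i$; this is exactly the boundedness needed to invoke a Gromov-type precompactness theorem and pass to a pointed Gromov-Hausdorff limit.

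For case (1), I would first show the limit splits off a line. The natural splitting direction is $\na f/|\na f|$, the unit normal to the level sets. Since $|\na f|\to 1$ and, by the curvature decay together with the second fundamental form identity $A_{\Sigma_\tau} = \Ric/|\na f|$, the level sets have intrinsic diameter $\sim \sqrt{\tau}$ (Lemma \ref{level set diam est}), after rescaling by $R(p_i)\sim r(p_i)^{-1}$ the rescaled level sets through $p_i$ have bounded diameter and bounded curvature. Thus along a subsequence the rescaled level sets $(\Sigma_{v(p_i)}, \lambda_i g|_{\Sigma})$ converge in Gromov-Hausdorff sense to a compact Alexandrov space $(Y,d_Y)$, and the transverse gradient flow parameter, which has unit speed in the limit, supplies the $\R$ factor. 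I would argue that the off-diagonal block of the metric (mixing the normal and tangential directions) is controlled by $\na^2 f = -\Ric$, hence is $O(\lambda_i^{1/2})\to 0$ after rescaling, so the limit metric is the product $\sqrt{d_e^2 + d_Y^2}$. The compactness of $Y$ as an Alexandrov space follows from the bounded-diameter estimate and a lower curvature bound on the level sets inherited via the Gauss equation.

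For case (2), under \eqref{4 dim exp decay rate} or \eqref{exp in n dim} the curvature decays exponentially, so $|\Rm| \sim e^{-r}$ while $R \sim e^{-r}$ and $\lambda_i = R(p_i) \sim e^{-r(p_i)}$. Now the rescaled diameter of the level set, of order $\sqrt{\lambda_i}\cdot\operatorname{diam}(\Sigma)\sim e^{-r(p_i)/2}\sqrt{r(p_i)}\to 0$, collapses to a point, so $(M, d_{\tilde g_i}, p_i)$ Gromov-Hausdorff converges to the ray $([0,\infty), d_e, 0)$ with the level sets crushed to points; no subsequence is needed because the limit is independent of $p_i$. The smooth asymptotic statement then comes from Proposition \ref{level sets are e flat}: the level sets are diffeomorphic to a fixed flat quotient $\mathbb{T}^{n-1}/\!\sim$, and the computation \eqref{evolution of g in exp} shows $\tfrac{\p}{\p s}\psi_s^* g = O(e^{-\tau_0 - s})\psi_s^* g$, so the pulled-back metric along the gradient flow converges exponentially to a fixed flat metric on $\mathbb{T}^{n-1}/\!\sim$, giving the smooth asymptotic cylinder $\R\times(\mathbb{T}^{n-1}/\!\sim)$ at exponential rate, with derivative bounds supplied by Shi's estimates \eqref{Shi est}.

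I expect the main obstacle to be the linear case (1): establishing that the Gromov-Hausdorff limit is an honest isometric product $\R\times Y$ rather than merely a metric space containing a line. Without a noncollapsing hypothesis one cannot appeal directly to the Cheeger-Colding splitting theorem in the usual form, so the splitting must be extracted by hand from the geometry of the $f$-foliation. The delicate points are proving that the transverse parameter has unit speed in the limit and that the mixed metric terms vanish uniformly, which requires quantitative control of $\Ric = -\na^2 f$ relative to the rescaled metric, and verifying that the limiting base $(Y,d_Y)$ is a genuine compact Alexandrov space with a uniform lower curvature bound independent of $i$, so that the subsequential limit is well-defined and compact.
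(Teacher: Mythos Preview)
Your proposal is correct and follows essentially the same approach as the paper: both use the level-set foliation of $f$, the gradient flow $\psi_s$ of $-\nabla f/|\nabla f|^2$ to parametrize the transverse direction, Lemma \ref{level set diam est} together with the Gauss equation to control the rescaled level-set geometry, and Gromov compactness to extract the Alexandrov limit $(Y,d_Y)$ in case (a), while in case (b) the collapsing diameter and the evolution \eqref{evolution of g in exp} yield the ray and the exponential asymptotics. One point you gloss over in case (b) is \emph{why} the limit is a ray rather than a line: the paper observes that the entire sublevel set $\{-f\le\tau_i\}$ has $h_i$-diameter $O(\tau_i\sqrt{R_i})=O(\tau_i e^{-\tau_i/2})\to 0$, so the ``interior'' direction collapses to the basepoint as well, leaving only the outward half-line.
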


\begin{proof}[Proof of Proposition \ref{GH limit2}:]
By the properness of $f$ \eqref{pot est vs r}, there is a large $\tau_0$ such that on $\{-f\geq \tau_0\}$
\be\label{lev vs r}
2^{-1}r\leq -f\leq 2r.
\ee
Let $R_i=R(p_i)$, $\tau_i=-f(p_i)\to\infty$, $h_i=R(p_i)g$ and $\tilde{h}_i=h_i|_{\Sigma_{\tau_i}}$. It follows from Lemma \ref{level set diam est} that the intrinsic diameter of $\Sigma_{\tau_i}=\{-f=\tau_i\}$ with respect to the scaled metric $\tilde{h}_i=h_i|_{\Sigma_{\tau_i}}$ is uniformly bounded from above
\be
\begin{split}
    \text{diam  }(\Sigma_{\tau_i}, \tilde{h}_i)&\leq C\sqrt{\tau_iR_i} \\
    &\leq C'\sqrt{r(p_i)^{-1}\tau_i}\\
    &\leq C'\sqrt{2}.
\end{split}
\ee
We will separate the argument into two cases, namely linear and exponential curvature decays.\\
\\
\textbf{Case $(a)$:} Linear curvature decay\\
\\
We shall apply the level set method by Deng-Zhu \cite{DengZhu-2018, DengZhu-2019, DengZhu-2020.2} to show the convergence. By the Gauss equation, \eqref{4 dim linear decay rate} or \eqref{lin decay in n dim} and \eqref{lev vs r}, we see that $c^{-1}\tau_i^{-1}\leq R_i\leq c\tau_i$ and  
\begin{eqnarray*}
|\Rm_{\Sigma_{\tau_i}}(\tilde{h}_i)|&\leq& CR_i^{-1}\left(|\Rm(g)|+\frac{|\Ric(g)|^2}{|\na_g f|^2}\right)\\
&\leq& C'\tau_i\left(|\Rm(g)|+\frac{|\Ric(g)|^2}{|\na_g f|^2}\right)\\
&\leq& C''\tau_i\left(\tau_i^{-1}+\tau_i^{-2}\right)\\
&\leq& C'''.
\end{eqnarray*}
By Gromov Compactness theorem \cite[Theorem 10.7.2]{BuragoBuragoIvanov-2001} (see also \cite{Petersen-2016}), 
after passing to a subsequence,
\be\label{level set GH conv}
(\Sigma_{\tau_i}, d_{\tilde{h}_i}, p_i) \longrightarrow (Y, d_Y, p_{\infty})
\ee
converges in pointed Gromov-Hausdorff topology as $i\to \infty$, where $(Y, d_Y, p_{\infty})$ is a compact Alexandrov space. 

Next we consider a type of sets introduced by Deng-Zhu \cite{DengZhu-2018, DengZhu-2019, DengZhu-2020.2}, for any $p$ $\in M$ and $k>0$
\be\label{mpk}
M_{p, k}:=\left\{y:\, |f(y)-f(p)|\leq \frac{k}{\sqrt{R(p)}}\right\}.
\ee
Moreover, they \cite[Lemma 3.1]{DengZhu-2019} showed that for all $k>0$, there is a large $I$ such that for all $i\geq I$
\be\label{ball vs mpk}
B_{h_i}(p_i, k)\subseteq M_{p_i, k}.
\ee
Let $\psi_{s}$ be the flow of the vector field $-\frac{\na_g f}{|\na_g f|_g^2}$ with $\psi_0=\text{  id }$. Then for all large $i$, $\psi_s: \Sigma_{\tau_i}\longrightarrow \Sigma_{\tau_i+s}$ are diffeomorphisms for $s$ $\in [-\frac{k}{\sqrt{R_i}}, \frac{k}{\sqrt{R_i}}]$. Hence we can define the following diffeomorphism $\Gamma_i: [-k, k]\times \Sigma_{\tau_i} \longrightarrow M_{p_i,k}$, where 
\be
\Gamma_i(s, q):=\psi_{\frac{s}{\sqrt{R_i}}}(q).
\ee
We compare the pull back metrics as in \cite[Lemma 4.2]{DengZhu-2020.2}. When restricting on $T\Sigma_{\tau_i}$,
\be
\begin{split}
  \frac{\p}{\p s} \psi_{\frac{s}{\sqrt{R_i}}}^*h_i &=\sqrt{R_i}\psi_{\frac{s}{\sqrt{R_i}}}^*\left(\frac{2\Ric}{|\na_g f|^2}\right)\\
   &\leq C\sqrt{R_i}\psi_{sR_i^{-1/2}}^*\left(\frac{g}{r}\right)\\
    &\leq \frac{2C\psi_{sR_i^{-1/2}}^*h_i}{s+\tau_i\sqrt{R_i}}.
    \end{split}
\ee
Similarly,
\be
\frac{\p}{\p s} \psi_{\frac{s}{\sqrt{R_i}}}^*h_i \geq -\frac{2C\psi_{sR_i^{-1/2}}^*h_i}{s+\tau_i\sqrt{R_i}}.
\ee
By integrating the above differential inequalities, we have for all large $i$ and $s\,\in [-k,k]$
\be
\left(1-\frac{2k}{\tau_i\sqrt{R_i}}\right)^{2C}h_i\leq \psi_{\frac{s}{\sqrt{R_i}}}^*h_i \leq \left(1+\frac{2k}{\tau_i\sqrt{R_i}}\right)^{2C}h_i \quad\text{on  } T\Sigma_{\tau_i}.
\ee
Since $R\sim (-f)^{-1}$, we also have 
\be
1\leq \left|\frac{\p}{\p s}\psi_{sR_i^{-1/2}}\right|_{h_i}^2=\frac{1}{|\na_g f|^2_g}=(1-R)^{-1}\leq (1-\frac{2C'}{\tau_i-ck\sqrt{\tau_i}})^{-1}.
\ee
Hence by $R_i\sim \tau_i^{-1}$, we conclude that on $[-k, k]\times \Sigma_{\tau_i}$, for all large $i$ (fixing $k>0$)
\be\label{unif equiv for lin conv}
(1-o(1))\left(ds^2+\tilde{h}_i\right)\leq \Gamma^*_i h_i \leq (1+o(1))\left(ds^2+\tilde{h}_i\right),
\ee
where $\tilde{h}_i=h_i|_{\Sigma_{\tau_i}}$. In view of \eqref{level set GH conv}, for any $\ve>0$, we can consider a sequence of Gromov Hausdorff approximations $F_i: (\Sigma_{\tau_i}, d_{\tilde{h}_i}, p_i) \longrightarrow (Y, d_Y, p_{\infty})$. Using \eqref{level set GH conv}, \eqref{ball vs mpk} and \eqref{unif equiv for lin conv}, one may check that $(\text{id  }, F_i)\circ \Gamma_i^{-1}$ is an  $\ve$ isometry from $M_{p_i, k}$ to $[-k, k]\times Y$ for all large $i$. This implies the pointed Gromov-Hausdorff convergence to the product space $\left(\R\times Y, \sqrt{d_e^2+d_
  Y^2}, (0, p_{\infty})\right)$ and finishes the proof for Proposition \ref{GH limit2} in Case $(a)$.
  \begin{remark} It can be seen from the Gauss equation, \eqref{Ric identity} and Shi's estimate \eqref{Shi est} that $(\Sigma_{\tau_i}, g_{\tilde{h}_i})$ has uniformly positive scalar curvature,
\be
\begin{split}
R_{\tilde{h}_i}&\geq R_i^{-1}R_g-2R_i^{-1}\Ric_g\left(\frac{\na f}{|\na f|}, \frac{\na f}{|\na f|}\right)-cR_i^{-1}|\Ric(g)|^2\\
&\geq \quad c-c'\tau_i^{-1}.
\end{split}
\ee
  
  \end{remark}
  \noindent
\textbf{Case $(b)$:} Exponential curvature decay\\
\\
Again by Lemma \ref{level set diam est} and $R\leq Ce^{-r}\leq C'e^f$,
\be\label{diam control in GH exp}
\begin{split}
    \text{diam  }(\Sigma_{\tau_i}, \tilde{h}_i)&\leq C\sqrt{\tau_iR_i} \\
    &\leq C'\sqrt{\tau_ie^{-\tau_i}}\longrightarrow 0 \text{  as } i\to \infty.
\end{split}
\ee
Hence we have the following convergence in Gromov-Hausdorff sense (without taking subsequence)
\be\label{level set GH conv disc}
(\Sigma_{\tau_i}, d_{\tilde{h}_i}, p_i) \longrightarrow (\{0\}, d_0, 0)  \text{  as } i \to \infty,
\ee
where $d_0$ is the discrete metric on the singleton $\{0\}$. 

To proceed, we define another type of sets similar to $M_{p, k}$ in \eqref{mpk}, namely
\be
N_{p, k}:=\{y:\, f(y)\geq f(p) -\frac{k}{\sqrt{R(p)}}\}.
\ee
We first show an analog to \eqref{ball vs mpk}: for any $k>0$
\be\label{Npk vs ball}
B_{h_i}(p_i, k)\subseteq N_{p_i, k}.
\ee
Suppose on the contrary, we can find a point $z$ $\in B_{h_i}(p_i, k)\setminus N_{p_i, k}$ and a distance minimizing geodesic $\gamma: [0, T] \longrightarrow M$ with respect to $h_i$ joining $p_i$ to $z$. By restricting $\gamma$ on a smaller interval if necessary,  
we may further assume that for all $t$, $-f(\gamma(t))\leq\tau_i+kR_i^{-1/2}$, $f(\gamma(0))=f(p_i)=-\tau_i$ and $f(\gamma(T))=-\tau_i-k R_i^{-1/2}$. Hence by $|\na_g f|_g\leq 1$,
\begin{eqnarray*}
k> d_{h_i}(p_i,z) &\geq& l_{h_i}(\gamma)\\
&=& \sqrt{R_i}\int_{0}^T |\dot{\gamma}|_g(s)\, ds\\
&\geq& -\sqrt{R_i}\int_{0}^T\la \dot{\gamma}, \na_{g} f\ra_{g}(s)\, ds\\
&=& k,
\end{eqnarray*}
which is impossible. Therefore, we must have \eqref{Npk vs ball}. To show the pointed Gromov-Hausdorff convergence, for any $\varepsilon>0$, we construct an $\varepsilon$ isometry from $F_i: N_{p_i, k} \longrightarrow [0, \infty)$ for all large $i$, 
\[ F_i(x):=\begin{cases}
      0 & \text{  if  } f(x)> -\tau_i\\
      -\sqrt{R_i}\left(f(x)+\tau_i\right)
    & \text{  if  } f(x)\leq -\tau_i.\\
   \end{cases}
\]
Obviously,  $F_i(p_i)=0$. As before, $\psi_{s}$ denote the flow of the vector field $-\frac{\na_g f}{|\na_g f|_g^2}$ with $\psi_0=\text{  id}$. For any $a,b$ $\in N_{p_i, k}$ satisfying $-\tau_i\geq f(a)\geq f(b)$, let $\b:=f(a)-f(b)\geq 0$, by the proof of \eqref{diam relation} in Proposition \ref{level sets exp decay}, 

\be\label{F dist est}
\begin{split}
d_{h_i}(a, b)&\leq d_{h_i}(a, \psi_{\b}(a))+d_{h_i}(\psi_{\b}(a), b)\\
             &\leq \sqrt{R_i}\int_0^{\b}\frac{1}{|\na_g f|_g}\,ds+\sqrt{R_i}\text{  diam  }(\Sigma_{-f(b)}, g)\\
             &\leq \sqrt{R_i}(1-ce^{-\tau_i})^{-1}\b+C\text{  diam  }(\Sigma_{\tau_i}, \tilde{h}_i)
\end{split}
\ee
Using a similar argument as in the proof of \eqref{Npk vs ball}, we also have
\be
d_{h_i}(a, b)\geq \sqrt{R_i}\b.
\ee
Hence by \eqref{diam control in GH exp}, \eqref{F dist est} and fixing $k>0$, we may take $i$ to be sufficiently large such that
\be\label{ep isom exp case est}
\begin{split}
\Big||F_i(a)-F_i(b)|-d_{h_i}(a, b)\Big|&= |\sqrt{R_i}\b-d_{h_i}(a, b)|\\
&\leq 2cke^{-\tau_i}+C\text{  diam  }(\Sigma_{\tau_i}, \tilde{h}_i) \longrightarrow 0. \\
\end{split}
\ee
When $f(a)>-\tau_i\geq f(b)$, it follows from \eqref{lev vs r} that there is a positive constant $c_0$ such that for all large $i$
\be
\{x:\, f(x)\geq-\tau_i\}\subseteq B_{g}(p_0, 2\tau_i+c_0)
\ee
and thus for all $y, z$ $\in \{x:\, f(x)\geq-\tau_i\}$,
$$d_{h_i}(y,z)\leq (4\tau_i+2c_0)\sqrt{R_i}\leq C(4\tau_i+2c_0)e^{-\tau_i/2}.$$
By \eqref{diam control in GH exp} and \eqref{ep isom exp case est}, we see that
\begin{eqnarray*}
\Big||F_i(a)-F_i(b)|-d_{h_i}(a, b)\Big|&\leq&\Big||F_i(p_i)-F_i(b)|-d_{h_i}(p_i, b)\Big|+d_{h_i}(a,p_i)\\
&\leq& 2cke^{-\tau_i}+C\text{  diam  }(\Sigma_{\tau_i}, \tilde{h}_i)\\
&&+ C(4\tau_i+2c_0)e^{-\tau_i/2} \longrightarrow 0. \\
\end{eqnarray*}
It remains to verify that $F_i$ is almost surjective, i.e. $[0, k-\varepsilon) \subseteq F_i(B_{h_i}(p_i, k))$. From the construction of $F_i$, we have for any $s$ $\in [0,k-\varepsilon)$, $s=F_i\left(\Sigma_{\tau_i+s/\sqrt{R_i}}\right)$. Thanks to \eqref{F dist est}, for all large $i$ (fixing $k$),
\begin{eqnarray*}
d_{h_i}(p_i, \Sigma_{\tau_i+s/\sqrt{R_i}}) &\leq& (1-ce^{-\tau_i})^{-1}s+C\text{  diam  }(\Sigma_{\tau_i}, \tilde{h}_i)\\
&<& k.
\end{eqnarray*}
Hence $F_i$ is an $\varepsilon$ isometry and we get the pointed Gromov-Hausdorff convergence to the ray. For the smooth convergence to a cylinder, by the local Shi derivative estimates \cite[Lemma 2.6]{Deruelle-2017}, $\na R=2\Ric(\na f)$ and \eqref{Rm by R in n dim},  we  have for all integer $k\geq 0$, there is a positive constant $C_k$ such that
\be\label{exp derivative estiamte}
|\na^k\Rm(g)|\leq C_k R\leq C'_k e^{-r}\text{  on } M. 
\ee
When restricted on $\Sigma_{\tau_0}$, $\psi_s:\Sigma_{\tau_0}\longrightarrow \Sigma_{\tau_0+s}$ are diffeomorphisms for all $s\geq 0$ and we denote the pull back metric on $\Sigma_{\tau_0}=\{-f=\tau_0\}$ by $g_s:=\psi_s^*g$. Since the second fundamental form of $\Sigma_{\tau_0+s}$ is $-\frac{\Ric}{|\na_g f|_g}$, we may apply \eqref{exp derivative estiamte} and the computation in \eqref{evolution of g in exp} to conclude that
\be
\left|\na_{g_s}^k\frac{\p}{\p s} g_s\right|_{g_s}\leq C_ke^{-\tau_0-s}
\ee
for all $s\geq 0$ and integer $k\geq 0$. By \cite[Proposition A.5]{Brendle-2010}, $g_s$ converges in $C^{\infty}$ sense to a smooth metric $g_{\infty}$ on $\Sigma_{\tau_0}$ as $s\to \infty$. The limit $g_{\infty}$ agrees with the subsequential limit in the proof of Proposition \ref{level sets are e flat} and thus is flat. Moreover, $C^{-1}g_s\leq g_{\infty}\leq Cg_s$ for all $s\geq 0$.
By the compactness of $\Sigma_{\tau_0}$ and the smooth convergence, for all $k\geq 0$, $\left|\na^k_{g_{\infty}}g_s\right|_{g_{\infty}}\leq C_k'$ for all $s\geq 0$. One may then argue by induction as in \cite[Lemma A.4]{Brendle-2010} to see that 
\be
\begin{split}
\left|\na_{g_\infty}^k\frac{\p}{\p s} g_s\right|_{g_{\infty}}&\leq C \left|\na_{g_s}^k\frac{\p}{\p s} g_s\right|_{g_s} + C\sum_{l=0}^{k-1}\left|\na_{g_\infty}^l\frac{\p}{\p s} g_s\right|_{g_{\infty}}\\
&\leq C_ke^{-\tau_0-s}.
\end{split}
\ee
Hence by integrating the above estimates, we have for all $k\geq 0$,
\be\label{level set cy estimate}
\left|\na_{g_\infty}^k \big(g_s-g_{\infty}\big)\right|_{g_{\infty}}\leq C_ke^{-\tau_0-s}.
\ee
We define the asymptotic cylinder $L$ for the steady soliton $(M,g)$ as follows. Let $L:=\R\times \Sigma_{\tau_0}$ with the product metric $g_L:=ds^2+g_{\infty}$. $\Phi: (0,\infty)\times\Sigma_{\tau_0}\longrightarrow \{-f>\tau_0\}$ is the diffeomorphism given by  $\Phi(s,\omega):=\psi_s(\omega)$. It can be seen that 
\be
\Phi^*g=|\na_g f|_g^{-2}ds^2+g_s.
\ee
Then by \eqref{exp derivative estiamte}, \eqref{level set cy estimate} and a direct (though tedious) induction argument, we have for any integers $k$, $p\geq 0$, and vectors $w_i\in T\Sigma_{\tau_0}$ with $|w_i|_{g_{\infty}}=1$,
\be\label{global cy estimate 1}
\left|\na_{g_{\infty}}^p\frac{\p^k}{\p s^k}\Big(\Phi_{(s, \omega)}^*g-g_L\Big)(w_1,\cdots, w_{p+2})\right|\leq C_{k,p}\,e^{-\tau_0-s},
\ee
where $C_{k,p}$ is some positive constant. Estimate \eqref{global cy estimate 1}
, together with the fact that $\na_{g_L}\frac{\p}{\p s}\equiv 0$ on $L$, implies the asymptotic convergence to $(L, g_L)$ at exponential rate. This completes the proof of Proposition \ref{GH limit2}.
\end{proof}

\newpage

\appendix 

\section{Dichotomy in the expanding case}
We shall give a proof of Theorem \ref{counter eg expander} which is a direct consequence of the results due to Deruelle \cite{Deruelle-2016, Deruelle-2017}. 
The key ingredient of the proof is the application of the existence and compactness results of conical expander in \cite{Deruelle-2016, Deruelle-2017}. 
\begin{proof}[\textbf{Proof of Theorem \ref{counter eg expander}:}]
We pick a smooth metric $h$ on $X=\mathbb{S}^2$ with positive but nonconstant Gauss curvature, for instance, the one induced by an ellipsoid embedded in $\R^3$. By scaling the metric $h$ if necessary, we may assume that the curvature operator of $h$ satisfies
\be\label{curvature op of h}
\Rm(h) \geq \text{id}_{\Lambda^2TX} \text{  on  } X
\ee
with equality holds somewhere at $\omega_0$ $\in X$ (this is possible since $X$ is compact and of real dimension $2$). By the existence result of conical expander \cite[Theorem 1.3]{Deruelle-2016}, there exists an asymptotically conical gradient expander $(M^3, g, f)$ with $\Rm(g)\geq 0$ and asymptotic cone given by $(C(X), dt^2+t^2h)$. Indeed, let $\{c_i\}_{i=1}^{\infty}$ be a strictly increasing sequence of positive numbers with $\lim_{i\to\infty} c_i=1$. We denote the metric $c_ih$ by $h_i$ and by \eqref{curvature op of h}
\be
\Rm(h_i) > \text{id}_{\Lambda^2TX} \text{  on  } X.
\ee
By \cite[Theorem 1.3]{Deruelle-2016}, for each $i$, there exists an asymptotically conical gradient expander $(M^3_i, g_i, f_i)$ with $\Rm(g_i)> 0$ and asymptotic cone given by $(C(X), dt^2+t^2h_i)$. Since for all large $i$, $(X, h_i)$ satisfies
\be
|\na^k\Rm(h_i)|(\omega)=c_i^{-\frac{k+2}{2}}|\na^k\Rm(h)|(\omega)\leq 2^{\frac{k+2}{2}}\sup_X|\na^k\Rm(h)|.
\ee
It follows from \cite[Remark 4.11]{Deruelle-2016} and \eqref{cone derivative estimates for metric} that we can find a sequence of positive numbers $\{\Lambda_k\}_{k=1}^{\infty}$ independent of $i$ such that
\be
\limsup_{x\to\infty} r_i^{2+k}|\na^k\Rm(g_i)|\leq \Lambda_k
\ee
and the asymptotic volume ratio is bounded from below
\be
\lim_{r\to\infty}\frac{\text{Vol}_{g_i}\left(B_{g_i}(p, r)\right)}{r^3}=\frac{\text{Vol}_{h_i}(X)}{3}\geq \frac{\text{Vol}_{h}(X)}{6}.
\ee
where $r_i$ is the distance function w.r.t. $g_i$ on $M_i$. Shifting the potential $f_i$ by constants if necessary, we apply the compactness result for conical expander by Deruelle \cite[Theorem 1.7]{Deruelle-2017} (see also \cite[Theorem 4.9]{Deruelle-2016}) and conclude that $(M^3_i, g_i, f_i)$ converges smoothly and subsequentially as $i\to\infty$ to an asymptotically conical expander $(M^3, g, f)$ with $\Rm(g)\geq 0$ and asymptotic cone given by $(C(X), dt^2+t^2h)$. Hence the existence of conical expander asserted follows. 
Using $\Rm(g)\geq 0$ and \cite[Proposition 2.4]{Deruelle-2017}, 
we have $\lim_{x\to\infty}4r^{-2}v=1$, where $v:=n/2-f$. It remains to justify \eqref{failure}. By the virtue of \eqref{cone derivative estimates for metric}, we see that $\limsup_{x\to \infty}v|\Rm|<\infty$ and for any $\omega$ $\in X$
\be\label{limit and cone}
\lim_{t\to\infty}4v\circ \phi^{-1}(t,\omega)|\Rm(g)|\circ \phi^{-1}(t,\omega)=|\Rm(g_C)|(1, \omega),
\ee
where $g_C=dt^2+t^2h$ and $\phi$ is the diffeomorphism as in Definition \ref{AC expander}. Since $g_C$ is a warped product with warping function $t$, its curvature tensor satisfies:
\begin{eqnarray}
\Rm(g_c)\left(\frac{\p}{\p t}, \cdot, \cdot, \cdot\right)&=&0\,;\\
\Rm(g_c)\left(A, B, C, D\right)(t,\omega)&=& t^2\Big[\Rm(h)\left(A, B, C, D\right)(\omega)\\
\notag&&\quad -\Big(h(A, D)h(B, C)-h(A, C)h(B, D)\Big)(\omega)\Big],
\end{eqnarray}
for any $A, B, C, D $ $\in T_{\omega}X$. Hence $\forall$ $\omega$ $\in X$,
\be
|\Rm(g_c)|(1,\omega)=\left|\Rm(h)-\frac{h\bigodot h}{2}\right|(\omega),
\ee
where $h\bigodot h_{\a\b\gamma\delta}:=2h_{\a\delta}h_{\b\gamma}-2h_{\a\gamma}h_{\b\delta}$. By the construction of $h$ \eqref{curvature op of h}, $\Rm(h)(\omega_0) = \text{id}_{\Lambda^2TX}(\omega_0)$ and \eqref{limit and cone}, it can be seen that 
\bee
0=|\Rm(g_C)|(1, \omega_0)=\liminf_{x\to \infty}4v|\Rm|. 
\eee
As $(X, h)$ is not of constant curvature and satisfies \eqref{curvature op of h}, there exists $\omega_1$ $\in X$ such that $\Rm(h)(\omega_1) > \text{id}_{\Lambda^2TX}(\omega_1)$ 
and thus both $|\Rm(g_C)|(1, \omega_1)$ and $\limsup_{x\to \infty}v|\Rm|$ are positive. This justifies \eqref{failure}.
\end{proof}

\newpage


\begin{thebibliography}{10}
\bibitem{Appleton-2017}Appleton, A., {\sl A family of non-collapsed steady Ricci solitons in even dimensions greater or equal to four}, arXiv:1708.00161 [math.DG]
\bibitem{Bamler-2020.0} Bamler, R., {\sl Entropy and heat kernel bounds on a Ricci flow background}, arXiv:2008.07093 [math.DG]
\bibitem{Bamler-2020} Bamler, R., {\sl Compactness theory of the space of super Ricci flows}, arXiv:2008.09298 [math.DG]
\bibitem{Bamler-2020.1} Bamler, R., {\sl Structure theory of non-collapsed limits of Ricci flows}, arXiv:2009.03243 [math.DG]
\bibitem{BamlerChowDengMaZhang-2021}Bamler, R.; Chow, B.; Deng, Y. X.; Ma, Z.; Zhang, Y. J., {\sl Four-Dimensional Steady Gradient Ricci Solitons with 3-Cylindrical Tangent Flows at Infinity}, arXiv:2102.04649 [math.DG]

\bibitem{Brendle-2010}Brendle, S., {\sl Ricci flow and the sphere theorem}. Graduate Studies in Mathematics, 111. American Mathematical Society, Providence, RI, 2010.
\bibitem{Brendle-2013}Brendle, S., {\sl Rotational symmetry of self-similar solutions to the Ricci flow}, Invent. Math. \textbf{194}, 731-764.
\bibitem{Brendle-2014}Brendle, S., {\sl Rotational symmetry of Ricci solitons in higher dimensions}. J. Differential Geom. 97 (2014), no. 2, 191-214. 

\bibitem{BuragoBuragoIvanov-2001} Burago, D.; Burago, Y.; Ivanov, S., {\sl A course in metric geometry}. Graduate Studies in Mathematics, 33. American Mathematical Society, Providence, RI, 2001.


\bibitem{Cao-1996} Cao, H. D., {\sl Existence of gradient K\"{a}hler Ricci solitons}, Elliptic and Parabolic Methods in Geometry (Minneapolis, MN, 1994), A K Peters, Wellesley, MA (1996), 1-16.
\bibitem{Cao-2010} Cao, H. D., {\sl Recent progress on Ricci solitons}. Recent advances in geometric analysis, 1–38, Adv. Lect. Math. (ALM), 11, Int. Press, Somerville, MA, 2010.
\bibitem{CarrilloNi-2009}Carrillo, J.; Ni, L., {\sl Sharp logarithmic Sobolev inequalities on gradient solitons and applications}, Comm. Anal. Geom. \textbf{355} (4) (2009), 721-753.
\bibitem{CatinoMastroliaMonticelli-2016}Catino, C.; Mastrolia, P.; Monticelli, D.D., {\sl Classification of expanding and steady Ricci solitons with integral curvature decay}, Geom. Topo. \textbf{20} (2016), 2665-2685.
\bibitem{Chan-2019}Chan, P.-Y., {\sl Curvature estimates for steady Ricci solitons, Trans. Amer. Math. Soc. 372 (2019)}, no. 12, 8985-9008.
\bibitem{Chan-2019.2}Chan, P.-Y., {\sl Gradient steady Kahler Ricci solitons with non-negative Ricci curvature and integrable scalar curvature}, arXiv:1908.10445 [math.DG].
\bibitem{Chan-2020} Chan, P.-Y., {\sl Curvature estimates and gap theorems for expanding Ricci solitons}, arXiv:2001.11487 [math.DG]
\bibitem{ChanMaZhang-2021} Chan, P.-Y.; Ma, Z.-L.; Zhang, Y.-J., {\sl Ancient Ricci flows with asymptotic solitons}, arXiv:2106.06904 [math.DG]


\bibitem{CheegerColding-1996} Cheeger, J.; Colding, T. H., {\sl Lower bounds on Ricci curvature and the almost rigidity of warped products}. Ann. of Math. (2) 144 (1996), no. 1, 189–237.
\bibitem{CheegerColding-1997} Cheeger, J.; Colding, T. H., {\sl On the structure of spaces with Ricci curvature bounded below. I}. J. Differential Geom. 46 (1997), no. 3, 406–480. 


\bibitem{Chen-2009}Chen, B. L., {\sl Strong Uniqueness of Ricci flow}, J. Differential Geom. \textbf{82} (2) (2009), 362-382.
\bibitem{Chicone-2006}Chicone, C., {\sl Ordinary differential equations with applications. Second edition}. Texts in Applied Mathematics, 34. Springer, New York, 2006.
\bibitem{ChowDengMa-2020}Chow, B.; Deng, Y. X.; Ma, Z.-L., {\sl On Four-dimensional Steady gradient Ricci solitons that dimension reduce}, arXiv:2009.11456 [math.DG].
\bibitem{ChowLuNi-2006}Chow, B.; Lu, P.; Ni, L., {\sl Hamilton's Ricci flow}, Graduate studies in Mathematics (2006).
\bibitem{ChowLuYang-2011}Chow, B.; Lu, P.; Yang, B., Lower bounds for the scalar curvatures of noncompact gradient Ricci solitons, C. R. Math. Acad. Sci. Paris 349 (2011), no. 23-24, 1265-1267.
\bibitem{Chowetal-2007}Chow, B. et al, {\sl The Ricci flow: techniques and applications, Part I. Geometric aspects}. Math. Survey and Monograghs, \textbf{135}, Amer. Math. Soc., Prodidence, RI, (2007).
\bibitem{Chowetal-2008}Chow, B. et al, {\sl The Ricci flow: techniques and applications. Part II. Analytic aspects}. Mathematical Surveys and Monographs, \textbf{144}. American Mathematical Society, Providence, RI, 2008
\bibitem{DengZhu-2015}Deng, Y. X.; Zhu, X. H., {\sl Complete noncompact gradient Ricci solitons with nonnegative Ricci curvature}, Math. Z. \textbf{279} (2015), 211-226.
\bibitem{DengZhu-2018.0}Deng, Y. X.; Zhu, X. H., {\sl Asymptotic behavior of positively curved steady Ricci solitons}. Trans. Amer. Math. Soc. 370 (2018), no. 4, 2855-2877.
\bibitem{DengZhu-2018}Deng, Y. X.; Zhu, X. H., {\sl Classification of gradient steady Ricci solitons with linear curvature decay}. Sci. China Math. 63 (2020), no. 1, 135-154
\bibitem{DengZhu-2019}Deng, Y. X.; Zhu, X. H., {\sl Three-dimensional steady gradient Ricci solitons with linear curvature decay}. Int. Math. Res. Not. IMRN 2019, no. 4, 1108-1124.
\bibitem{DengZhu-2020.2}Deng, Y. X.; Zhu, X. H., {\sl Higher dimensional steady Ricci solitons with linear curvature decay}, J. Eur. Math. Soc. (JEMS) 22 (2020), no. 12, 4097-4120.
\bibitem{DengZhu-2020.3}Deng, Y. X.; Zhu, X. H., {\sl Rigidity of $\kappa$-noncollapsed steady K\"{a}hler-Ricci solitons}. Math. Ann. 377 (2020), no. 1-2, 847-861
\bibitem{Deruelle-2012}Deruelle, A., {\sl Steady gradient Ricci soliton with curvature in $L^1$}. Comm. Anal. Geom. 20 (2012), no. 1, 31-53.
\bibitem{Deruelle-2016}Deruelle, A., {\sl Smoothing out positively curved metric cones by Ricci expanders}. Geom. Funct. Anal. 26 (2016), no. 1, 188–249.
\bibitem{Deruelle-2017}Deruelle, A., {\sl Asymptotic estimates and compactness of expanding gradient Ricci solitons}, Ann. Sc. Norm. Super. Pisa Cl. Sci. (5) 17 (2017), no. 2, 485-530.
\bibitem{DeTurckKazdan-1981}DeTurck, D.; Kazdan, J., {\sl Some regularity theorems in Riemannian geometry}. Ann. Sci. École Norm. Sup. (4) 14 (1981), no. 3, 249-260.
\bibitem{FeldmanIlmanenKnopf}Feldman, F.; Ilmanen, T.; Knopf, D., {\sl Rotationally symmetric shrinking and expanding gradient K\"{a}hler Ricci solitons}, J. Differential Geom. \textbf{65} (2003), 169-209.

\bibitem{FM-2008} Fern\'{a}ndez-L\'{o}pez, M.; Garc\'{i}a-R\'{i}o, E., {\sl A remark on compact Ricci solitons}. Math. Ann. 340 (2008), no. 4, 893–896.

\bibitem{GilbargTrudinger-2001}Gilbarg, D., Trudinger, N., {\sl Elliptic partial differential equations of Second order, reprint of the 1998 edition}, Springer (2001).
\bibitem{Gromov-1978}Gromov, M., {\sl Almost flat manifolds}. J. Differential Geometry 13 (1978), no. 2, 231-241.
\bibitem{GL-spin} Gromov, M.; Lawson B., {\sl Spin and Scalar Curvature in the Presence of a Fundamental Group,} I. Annals of Mathematics, 111(2), 209-230. doi:10.2307/1971198.
\bibitem{Hamilton-1995}Hamilton, R., {\sl The formation of singularities in the Ricci flow}, Surveys in Differential Geometry \textbf{2} (1995), 7-136, International Press.
\bibitem{Hamilton-1995.2}Hamilton, R., {\sl A Compactness Property for Solutions of the Ricci Flow}. American Journal of Mathematics, 117(3) (1995), 545-572.
\bibitem{Ivey-1996}Ivey T., {\sl Local existence of Ricci solitons}. Manuscripta Math. 91 (1996), no. 2, 151-162.
\bibitem{Kotschwar-2013}Kotschwar, B., {\sl A local version of Bando's theorem on the real-analyticity of solutions to the Ricci flow}, Bulletin of the London Mathematical Society 45(1) (2013), 153-158.
\bibitem{KotschwarWang-2015}Kotschwar, B.; Wang, L., {\sl Rigidity of asymptotically conical shrinking gradient Ricci solitons}. J. Differential Geom. 100 (2015), no. 1, 55–108.
\bibitem{KobayashiNomizu-1963}Kobayashi, S.; Nomizu, K., {\sl Foundations of Differential Geometry, Vol. 1}, Wiley and Sons, New York, 1963.
\bibitem{KobayashiNomizu-1969}Kobayashi, S.; Nomizu, K., {\sl Foundations of Differential Geometry, Vol. 2}, Wiley and Sons, New York, 1969.
\bibitem{Lai-2020} Lai, Y., {\sl A family of 3d steady gradient solitons that are flying wings}, arXiv:2010.07272 [math.DG].
\bibitem{MaZhang-2021} Ma, Z.-L.; Zhang, Y.-J., {\sl Perelman's entropy on ancient Ricci flows}, arXiv:2101.01233 [math.DG].
\bibitem{MunteanuSesum-2013} Munteanu, O.; Sesum, N., {\sl On gradient Ricci solitons}. J. Geom. Anal. 23 (2013), no. 2, 539-561.
\bibitem{MunteanuSungWang-2017}Munteanu, O.; Sung, C. J. A.; Wang, J. P., {\sl Poisson equation on complete manifolds},  Adv. Math. \textbf{348} (2019), 81-145.
\bibitem{MunteanuWang-2011}Munteanu, O.; Wang, J. P., {\sl Smooth metric measure spaces with non-negative curvature}. Comm. Anal. Geom. 19 (2011), no. 3, 451-486.
\bibitem{MunteanuWang-2017}Munteanu, O.; Wang, J. P., {\sl Conical structure for shrinking Ricci solitons}. J. Eur. Math. Soc. (JEMS) 19 (2017), no. 11, 3377–3390.
\bibitem{MunteanuWang-2019} Munteanu, O.; Wang, J. P., {\sl Structure at infinity for shrinking Ricci solitons}. Ann. Sci. \'{E}c. Norm. Sup\'{e}r. (4) 52 (2019), no. 4, 891-925.
\bibitem{p1}
Perelman, G., {The entropy formula for ricci flow and its geometric applications.} {arXiv preprint math/0211159, 2002}.

\bibitem{p2}
Perelman, G., {Ricci flow with surgery on three-manifolds.} arXiv preprint
math/0303109, 2002.
\bibitem{p3}
Perelman, G., {Finite extinction time for the solutions to the ricci flow on
certain three-manifolds}. arXiv preprint math/0307245, 2003.

\bibitem{Petersen-2016} Petersen, P., {\sl Riemannian geometry. Third edition}. Graduate Texts in Mathematics, 171. Springer, Cham, 2016.

\bibitem{SY} Schoen, R.; Yau, S. T., {\sl On the structure of manifolds with positive scalar curvature,} Manuscripta Math 28, 159–183 (1979). https://doi.org/10.1007/BF01647970.




\bibitem{Zhang-2009}Zhang, Z. H., {\sl On the completeness of gradient Ricci solitons}, Proc. Amer. Math. Soc. \textbf{137} (2009), 2755-2759.
\end{thebibliography}
\end{document}